\newcommand\org@maketitle{}
\newcommand\@authors{}
\let\org@maketitle\maketitle
\def\maketitle{%
	% fixing authors for amsbook
	\let\@authors\authors
	\nxandlist{; }{ and }{; }\@authors
	% end fix
	\hypersetup{
		pdftitle={\@title},
                pdfauthor={\@authors},
                pdfsubject={\subjclassname. \@subjclass},
		pdfkeywords={\@keywords}
	}%
	\org@maketitle
}
\renewcommand{\PrintDOI}[1]{\doi{#1}}
\let\arXiv\arxiv
\numberwithin{equation}{section}
\newtheorem{maintheorem}{Theorem}
\newtheorem{theorem}{Theorem}[section]
\newtheorem{lemma}[theorem]{Lemma}
\newtheorem{proposition}[theorem]{Proposition}
\newtheorem{corollary}[theorem]{Corollary}
\theoremstyle{definition}
\newtheorem{definition}[theorem]{Definition}
\theoremstyle{remark}
\newtheorem{example}[theorem]{Example}
\newcommand{\mfa}{\mathfrak{a}}
\newcommand{\cK}{{\mathcal K}}
\newcommand{\cM}{{\mathcal M}}
\newcommand{\e}{\varepsilon}
\newcommand{\g}{\gamma}
\newcommand{\la}{\lambda}
\newcommand{\Ld}{\Lambda}
\newcommand{\D}{\nabla}
\newcommand{\pa}{\partial}
\newcommand{\dv}{\operatorname{div}}
\newcommand{\R}{\mathbb{R}}
\newcommand{\vp}{\varphi}
\newcommand{\al}{\alpha}
\newcommand{\be}{\beta}
\newcommand{\si}{\sigma}
\newcommand{\sm}{\setminus}
\newcommand{\om}{\omega}
\newcommand{\K}{\widetilde{K}}
\newcommand{\wDu}{\widehat{\D u}}
\newcommand{\wDv}{\widehat{\D v}}
\newcommand{\wDw}{\widehat{\D w}}
\newcommand{\wDU}{\widehat{\D U}}
\newcommand{\loc}{\mathrm{loc}}
\newcommand{\dist}{\operatorname{dist}}
\newcommand{\mean}[1]{\langle{#1}\rangle}
\def\Xint#1{\mathchoice
  {\XXint\displaystyle\textstyle{#1}}%
  {\XXint\textstyle\scriptstyle{#1}}%
  {\XXint\scriptstyle\scriptscriptstyle{#1}}%
  {\XXint\scriptscriptstyle\scriptscriptstyle{#1}}%
  \!\int}
\def\XXint#1#2#3{{\setbox0=\hbox{$#1{#2#3}{\int}$}
    \vcenter{\hbox{$#2#3$}}\kern-.5\wd0}}
\def\dashint{\Xint-}
\author{Seongmin Jeon}
\address{Department of Mathematics, KTH Royal Institute of Technology, Stockholm, Sweden}
\email{seongmin@kth.se}
\author{Arshak Petrosyan}
\address{Department of Mathematics, Purdue University, West Lafayette,
  IN 47907, USA}
\email{arshak@purdue.edu}
\thanks{The second author is supported in part by NSF Grant DMS-1800527.}
\title[Almost minimizers for parabolic thin obstacle problem]{Regularity of almost minimizers for the parabolic thin obstacle problem}
\subjclass[2020]{Primary 49N60, 35R35}
\keywords{Almost minimizers, parabolic thin obstacle (or Signorini) problem, parabolic A-Signorini problem, regularity of solutions}
\begin{document}
\begin{abstract}
In this paper, we study almost minimizers for the parabolic thin obstacle (or Signorini) problem with zero obstacle. We establish their $H^{\si,\si/2}$-regularity for every $0<\si<1$, as well as $H^{\be,\be/2}$-regularity of their spatial gradients on the either side of the thin space for some $0<\be<1$. A similar result is also obtained for almost minimizers for the Signorini problem with variable H\"{o}lder coefficients.
\end{abstract}
\maketitle
\tableofcontents

\section{Introduction and main results}

\subsection{The parabolic thin obstacle (or Signorini) problem}
Let $\Omega$ be a domain in $\R^n$, $n\ge2$, and $\mathcal M$ be a smooth $(n-1)$-dimensional manifold that divides $\Omega$ into two parts: $\Omega\setminus\mathcal{M}=\Omega^+\cup \Omega^-$. For $T>0$, we set $\Omega_T:=\Omega\times(0,T]$, $\cM_T:=\cM\times(0,T]$ (\emph{the thin space}), and $(\pa\Omega)_T:=\pa\Omega\times(0,T]$. Let also $\psi:\cM_T\to\R$ (\emph{the thin obstacle}), $\psi_0:\Omega\times\{0\}\to\R$ (\emph{the initial value}), and $g:(\pa\Omega)_T\to\R$ (\emph{the boundary value}) be prescribed functions satisfying the compatibility conditions: $\psi_0\ge\psi$ on $\cM\times\{0\}$, $g\ge\psi$ on $(\cM\cap\pa\Omega)\times(0,T]$, and $g=\psi_0$ on $\pa\Omega\times\{0\}$.

We then say that a function $u\in W^{1,0}_2(\Omega_T)$ (see Sec~\ref{sec:notation} for notations) is a solution of the \emph{parabolic thin obstacle} (or \emph{Signorini}) \emph{problem} in $\Omega_T$, if it satisfies the variational inequality \begin{align}
    \label{eq:par-sig-var-ineq}&\int_{\Omega_T}\D u\D(v-u)+\pa_tu(v-u)\ge0\quad\text{for any }v\in \cK,\\
    &u\in \mathcal{K},\ \pa_tu\in L^2(\Omega_T),\ u(\cdot,0)=\psi_0 \text{ on } \Omega,
\end{align}
where $\mathcal K=\{v\in W^{1,0}_2(\Omega_T):v\ge\psi\text{ on }\cM_T,\ v=g\text{ on }(\pa\Omega)_T\}$. It is known that the solution $u$ satisfies 
\begin{align*}
    \Delta u-\pa_tu=0 & \text{ in }\Omega_T\setminus\cM_T,\\
    u\ge\psi,\ \pa_{\nu^+}u+\pa_{\nu^-}u\ge0,\ (u-\psi)(\pa_{\nu^+}u+\pa_{\nu^-}u)=0&\text{ on }\cM_T,\\
    u=g&\text{ on }(\pa\Omega)_T,\\
    u(\cdot,0)=\psi_0&\text{ on }\Omega\times\{0\},
\end{align*}
to be understood in a certain weak sense, where $\nu^\pm$ are the outer unit normal to $\Omega^\pm$ on $\cM$. 

Problems with unilateral constraints of this type go back to the original problem of Signorini in elastostatics \cite{Sig59}, but also appear in many applications ranging from math biology (semipermeable membranes) and boundary heat control \cite{DuvLio76} to math finance (American options) \cite{ConTan04}. Since the condition on the thin space $\cM_T$ changes from the Dirichlet-type condition $u=\psi$ to the Neumann-type condition $\pa_{\nu^+}u+\pa_{\nu^-}u=0$ as one crosses the apriori unknown interface
$$
\Gamma(u)=\partial_{\cM_T}\{u=\psi\}
$$
called \emph{the free boundary}, even the regularity of the solutions is a challenging problem. It has been known since the '80s that $\nabla u$ is parabolically H\"older continuous, i.e., $\nabla u\in H^{\beta,\beta/2}$ for some $0<\beta<1$, see \cites{Ura85, ArkUra96}. More recent results, based on the emergence of new monotonocity formulas, starting with \cites{AthCaf04, AthCafSal08,GarPet09} in the elliptic case and \cite{DanGarPetTo17} in the parabolic case, established the optimal regularity $\nabla u\in H^{1/2,1/4}$ at least when $\cM$ is flat; see also \cites{PetZel19,BanSVGZel17,AthCafMil18,BanDanGarPet20,BanDanGarPet21} for some further results. 

\subsection{Almost minimizers}
In this paper we introduce and investigate the almost minimizers related to the parabolic Signorini problem described above. For this purpose, we observe that an equivalent formulation of \eqref{eq:par-sig-var-ineq} is $$
\int_{\Omega_T}|\D u|^2+2\pa_tu(u-v)\le \int_{\Omega_T}|\D v|^2\quad\text{for any }v\in \cK.
$$
Based on this fact, we give the following definition of almost minimizers.

\begin{definition}\label{def:alm-min-par-Sig}
Given $r_0>0$ and a gauge function $\omega(r)$ defined on $(0,r_0)$\footnote{A gauge function $\omega:(0,r_0)\to[0,\infty)$ is a nondecreasing function with $\omega(0+)=0$.}, we say that a function $u\in W^{1,1}_2(\Omega_T)$ is an \emph{almost minimizer for the parabolic Signorini problem}, if $u\ge\psi$ on $\cM_T$ and for any parabolic cylinder $Q_r(z_0)=Q_r(x_0,t_0)\Subset\Omega_T$ with $0<r<r_0$, we have \begin{align}\label{eq:par-alm-min-def}
\int_{Q_r(z_0)}(1-\omega(r))|\D u|^2+2\pa_tu(u-v)\le (1+\omega(r))\int_{Q_r(z_0)}|\D v|^2
\end{align}
for any $v\in W^{1,0}_2(Q_r(z_0))$ with $v\ge \psi$ on $Q_r(z_0)\cap\cM_T$ and $v-u\in L^2(t_0-r^2,t_0;W_0^{1,2}(B_r(x_0)))$.
\end{definition}

Almost minimizers as above can be seen as perturbations of the solution of the parabolic Signorini problem. They can also be viewed as some very weak form of solutions, strong enough to retain some of the core properties of solutions. The main complication in the study of almost minimizers is that they do not satisfy a partial
differential equation. Thus, when we deal with almost minimizers we can only use comparison \eqref{eq:par-alm-min-def} with appropriately chosen competitors.

The notion of almost minimizers (without obstacle) was first introduced by Anzellotti \cite{Anz83}. Concerning almost minimizers in the framework of the thin obstacle problem, its time-independent version was studied by the authors in \cite{JeoPet21}. The results were extended to almost minimizers for the energy functional with variable H\"older coefficients by the authors and Smit Vega Garcia in \cite{JeoPetSVG20}. In this paper we extend some of those results to the parabolic setting. Regarding almost minimizers in the time-dependent setting, Habermann \cite{Hab14} considered vector-valued parabolic almost minimizers related to Dirichlet $p$-energy functionals (without obstacle).

%%%%%%%%%%%%%%%%%%%%%%%%%%%%%%%%%%%%%%%%%%%%%%%%

\subsection{Almost minimizers with variable coefficients}

One of the motivations and advantages of developing methods that work with almost minimizers is that they allow the extension of the results to the case of variable coefficients with the moduli of continuity controlled by the gauge function $\omega(r)$. As we will be working with $\omega(r)=r^\alpha$, $0<\alpha<1$, for the most of the paper, we extend he notion of the almost minimizers in Definition~\ref{def:alm-min-par-Sig} to include parabolic H\"older coefficients in the energy functional.

Let $A(z)=(a_{ij}(z))$ be an $n\times n$ symmetric uniformly elliptic matrix,
$H^{\alpha, \al/2}$-regular as a function of $z\in \Omega_T$, for some
$0<\alpha<1$, with ellipticity constants $0<\lambda\leq1\leq\Lambda<\infty$:
$$
\lambda|\xi|^2\leq \langle{A(z)\xi,\xi}\rangle\leq
\Lambda|\xi|^2,\quad z\in \Omega_T,\ \xi\in\R^n.
$$
For $z_0=(x_0,t_0)$, define $n$-dimensional ellipsoids $$
E_r(z_0):=A^{1/2}(z_0)(B_r)+x_0
$$
and $(n+1)$-dimensional elliptic cylinders
$$
F_r(z_0):=E_r(z_0)\times(t_0-r^2,t_0].
$$
By ellipticity of $A(z_0)$, we have
\begin{gather*}
B_{\la^{1/2}r}(x_0)\subset E_r(z_0)\subset B_{\Lambda^{1/2}r}(x_0)\\
Q_{\la^{1/2}r}(z_0)\subset F_r(z_0)\subset Q_{\Lambda^{1/2}r}(z_0).
\end{gather*}

% Using those variable coefficients $A$ and the relevant cylinders $F_r(z_0)$, we generalize the notion of almost minimizers for the parabolic Signorini problem \eqref{eq:par-alm-min-def} to the variable coefficient case.

We are now ready to extend the notion of the almost minimizers to the variable coefficient case.

\begin{definition}\label{def:alm-min-par-A-Sig}
We say that a function $U$ is an \emph{almost minimizer for the parabolic $A$-Signorini problem} in $\Omega_T$ if $U\in W^{1,1}_{2}(\Omega_T)$, $U\ge \psi$ on $\cM_T$, and 
\begin{equation}
    \label{eq:par-var-alm-min-def}
    \int_{F_r(z_0)}(1-\omega(r))\mean{A\D U,\D U}+2\pa_tU(U-V)\le (1+\omega(r))\int_{F_r(z_0)}\mean{A\D V,\D V}
\end{equation}
for any $F_r(z_0)\Subset \Omega_T$ and $V\in W^{1,0}_2(F_r(z_0))$ with $V\ge \psi$ on $F_r(z_0)\cap \cM_T$ and $V-U\in L^2(t_0-r^2,t_0;W^{1,2}_0(E_r(z_0)))$.
\end{definition}

We explicitly note that $W^{1,1}_2$ weak solutions of the parabolic $A$-Signorini problem
\begin{align*}
    -\dv(A\D U)+\pa_tU=0&\quad\text{in }\Omega_T\\*
    \mean{A\D U,\nu^+}+\mean{A\D U,\nu^-}\ge 0,\ U\ge 0,\ U(\mean{A\D U,\nu^+}+\mean{A\D U,\nu^-})=0&\quad\text{on }\cM_T
\end{align*}
are almost minimizers in the sense above and thus our results are applicable also to them. If one perturbs the PDE in the first line above, e.g.\ by adding a linear transport term with appropriate conditions on the coefficients, then the solutions are still almost minimizers in the sense above, at least when the thin space $\cM$ is flat. The details are given in Appendix~\ref{sec:appen-ex}. 

% Notice that when the coefficient $A(z)$ is identically the identity matrix $I$, the almost minimizer in \eqref{eq:par-var-alm-min-def} is the same as in \eqref{eq:par-alm-min-def}.

%%%%%%%%%%%%%%%%%%%%%%%%%%%%%%%%%%%%%%%%%%%%%%%%%%%

\subsection{Main results and structure of the paper}

In this paper we are interested in local regularity results for almost minimizers. Thus we assume that the domain $\Omega_T\subset \R^n\times\R$ is the parabolic cylinder $Q_1$. Due to the technical nature of the problem, we consider only the case when the thin space $\cM_T$ is $Q'_1$ (flat thin space), the thin obstacle $\psi$ is identically zero (zero thin obstacle), and the gauge function $\omega(r)=r^\al$ for some $0<\al<1$.

\medskip 

We now state our main results on the regularity of almost minimizers. Our first theorem is as follows.

\begin{maintheorem}
  \label{mthm:I}
 Let $u$ be an almost minimizer of the parabolic Signorini problem in $Q_1$. Then
  \begin{enumerate}
  \item $u\in H^{\si,\si/2}(Q_1)$ for any $0<\si<1$;
  \item $\D u\in H^{\be,\be/2}_{\loc}(Q_1^\pm\cup Q'_1)$ for some $\be=\be(\al,n)>0$.
  \end{enumerate}
\end{maintheorem}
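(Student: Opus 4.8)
The plan is to run the comparison-function / Campanato-space scheme for almost minimizers, adapting the elliptic argument of \cite{JeoPet21} to the parabolic setting. All estimates are local. Fix $z_0=(x_0,t_0)$ and $r>0$ with $Q_{2r}(z_0)\Subset Q_1$, and let $h$ be the solution of the genuine zero-obstacle parabolic Signorini problem in $Q_{2r}(z_0)$ carrying the same lateral and initial data as $u$. Such an $h$ is an admissible competitor in \eqref{eq:par-alm-min-def}: it satisfies $h\ge0$ on $\cM_T\cap Q_{2r}(z_0)$ and agrees with $u$ on the lateral boundary (and on the initial slice). Moreover, by the Arkhipova–Uraltseva estimates \cites{Ura85,ArkUra96}, $h$ is smooth away from the thin space and $\D h$ is bounded and parabolically H\"older continuous up to $Q'_r(z_0)$ from either side. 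Since $Q_r(z_0)$ stays a definite distance away from the lateral boundary and bottom of $Q_{2r}(z_0)$, these estimates are usable where needed, with bounds that scale correctly; in particular the energy and the gradient excess of $h$ decay on $Q_r(z_0)$ at the natural parabolic rates.

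The crux is a comparison estimate of the form
\[
\int_{Q_r(z_0)}|\D(u-h)|^2\le C\,\om(r)\int_{Q_{2r}(z_0)}|\D u|^2,\qquad
\int_{Q_{2r}(z_0)}|\D h|^2\le C\int_{Q_{2r}(z_0)}|\D u|^2.
\]
To prove it, insert $v=h$ into \eqref{eq:par-alm-min-def} on $Q_{2r}(z_0)$ and rewrite the time term via $2\pa_tu\,(u-h)=\pa_t\big[(u-h)^2\big]+2\pa_th\,(u-h)$. Integrating in $t$: the initial-slice term vanishes since $h$ matches the initial data of $u$; the final-slice term equals $\int_{B_{2r}(x_0)}(u-h)^2(\cdot,t_0)\ge0$ and so is discarded from the favorable side; and the remaining $\int_{Q_{2r}(z_0)}\pa_th\,(u-h)$ is eliminated using the variational inequality for $h$ tested against $v=u$ (admissible since $u\ge0$ on $\cM_T\cap Q_{2r}(z_0)$), which yields $\int\pa_th\,(u-h)\ge-\int\D h\,\D(u-h)$. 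The Dirichlet terms then combine exactly as in the elliptic case. The important point is that \emph{no time-derivative error term survives}: one only uses $\pa_tu\in L^2_{\loc}$, which is automatic from $u\in W^{1,1}_2$.

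From here both conclusions follow by standard iteration. For (1): one first shows that $u$ is locally bounded, by a De~Giorgi iteration based directly on \eqref{eq:par-alm-min-def} (comparing $u$ with its truncations). Combining the comparison estimate with the space-time oscillation and energy decay of $h$ — which is parabolically Lipschitz on each side up to $Q'$ (indeed $\D h$ is bounded and H\"older there) — a geometric iteration lemma gives $\int_{Q_r(z_0)}|\D u|^2\le Cr^{n+2\si}$ and $\operatorname{osc}_{Q_r(z_0)}u\le Cr^\si$ for every $0<\si<1$ (the exponent cannot reach $1$ because of the multiplicative $\om(r)=r^\al$ loss); the parabolic Morrey–Campanato embedding then yields $u\in H^{\si,\si/2}_{\loc}(Q_1)$ for all $\si<1$, and a covering/scaling argument upgrades this to $H^{\si,\si/2}(Q_1)$. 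For (2): near an interior point of $Q_1^\pm$ one compares with a caloric $h$ (whose gradient is smooth), while near a point $z_0\in Q'_1$ one works in the symmetric cylinder $Q_{2r}(z_0)$ and uses that $\D h\in H^{\be_0,\be_0/2}$ up to $Q'_r(z_0)$ on either side, so that the $L^2$-excess of $\D h$ on $Q^\pm_\rho(z_0)$ decays like $(\rho/r)^{n+2+2\be_0}$ times its excess on $Q^\pm_r(z_0)$. Feeding in the comparison estimate together with the bound $\int_{Q_{2r}(z_0)}|\D u|^2\le Cr^{n+2\si}$ from part (1) with $\si$ taken close to $1$ — which makes $\int_{Q_r(z_0)}|\D(u-h)|^2\le C\om(r)r^{n+2\si}\le Cr^{n+2+2\be}$ as soon as $\be<\si-1+\al/2$ — the excess $\Psi(z_0,\rho):=\int_{Q^\pm_\rho(z_0)}|\D u-(\D u)_{z_0,\rho}|^2$ obeys
\[
\Psi(z_0,\rho)\le C\,(\rho/r)^{n+2+2\be_0}\,\Psi(z_0,r)+Cr^{n+2+2\be},\qquad \rho\le r,
\]
and the usual iteration lemma gives $\Psi(z_0,\rho)\le C\rho^{n+2+2\be}$, i.e.\ $\D u\in H^{\be,\be/2}_{\loc}(Q_1^\pm\cup Q'_1)$ for $\be=\be(\al,n)>0$ (any $\be<\min\{\be_0,\al/2\}$ works).

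The main obstacle is \emph{not} the time derivative, which cancels as above, but rather a pair of more technical issues. First, one must set up the comparison so that the parabolic integration by parts closes: $h$ must carry the initial data of $u$, and one must localize to sub-cylinders so that the up-to-$Q'$ regularity of $h$ (which degenerates near the bottom corner and near the lateral boundary) is genuinely available in the iteration. Second, and more seriously, one must convert the known regularity of the parabolic Signorini comparison function $h$ into excess-decay estimates with constants that are \emph{uniform under the parabolic rescaling} driving the iteration; and, for part (1), one must establish the local $L^\infty$ bound and the full space-time (not merely spatial) H\"older continuity of $u$ using only comparison with competitors through \eqref{eq:par-alm-min-def}, with no PDE for $u$ available.
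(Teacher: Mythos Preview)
Your overall strategy---Signorini replacement $h$, the energy/Poincar\'e control of $u-h$, and a Campanato iteration---coincides with the paper's, and your derivation of the comparison bound $\int_{Q_r}|\D(u-h)|^2\le Cr^\al\int_{Q_r}|\D u|^2$ is correct and identical to the one in the paper. There are, however, two genuine gaps.

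For (1), you iterate to $\int_{Q_r(z_0)}|\D u|^2\le Cr^{n+2\si}$ and then invoke a ``parabolic Morrey--Campanato embedding'' to conclude $u\in H^{\si,\si/2}$. As the paper emphasizes in the discussion following Theorem~\ref{mthm:I}, a Morrey bound on the \emph{spatial} gradient alone cannot yield parabolic H\"older continuity: the Poincar\'e inequality in the time direction fails, so no such embedding exists. Your sketch never explains how the \emph{time}-H\"older regularity is obtained; the De~Giorgi step gives only local boundedness, and the claimed oscillation bound $\operatorname{osc}_{Q_r}u\le Cr^\si$ does not follow from the comparison estimate, which controls only $\|u-h\|_{L^2}$, not $\|u-h\|_{L^\infty}$. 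The paper's remedy is to iterate not on $\int_{Q_r}|\D u|^2$ but on the coupled functional
\[
\vp_{z_0}(r,u)=r^{n+4}\int_{Q_r(z_0)}|\D u|^2+\int_{Q_r(z_0)\times Q_r(z_0)}|u(z)-u(w)|^2\,dz\,dw,
\]
proving $\vp_{z_0}(\rho,u)\le C\bigl[(\rho/r)^{2n+6}+r^\al\bigr]\vp_{z_0}(r,u)$ (Proposition~\ref{prop:par-alm-min-Mor-est-thin}); the second summand then feeds directly into the Campanato characterization of $H^{\si,\si/2}$. This coupling is the key idea missing from your sketch.

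For (2), you assert that since $\D h\in H^{\be_0,\be_0/2}$ up to $Q'$, the excess of $\D h$ on $Q^\pm_\rho$ decays like $(\rho/r)^{n+2+2\be_0}$ times its excess on $Q^\pm_r$. H\"older regularity does \emph{not} give this self-improving form: it bounds the excess at scale $\rho$ by the H\"older seminorm of $\D h$, which scales with $\|h\|_{L^2(Q_{2r})}$, not with the excess of $\D h$ at scale $r$. The paper closes this via a dichotomy (Proposition~\ref{prop:par-alm-min-grad-Camp-est-thin}): either $\sup_{\pa_p Q_R}|u|\le C_3R^{\al'}$, so $\|h\|_{L^2}$ is small enough that the entire term is absorbed into the inhomogeneous $r^{n+2+2\be}$ error; or $u>0$ on $Q_R$, so $h$ is genuinely caloric there and one gets the true $(\rho/r)^{n+4}$ excess decay for $\D h$, modulo an even/odd correction that is separately estimated. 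Your single-case treatment does not account for either branch.
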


For the proof, we mainly follow the idea in its elliptic counterpart \cite{JeoPet21}. However, the situation is more complicated in our setting. In \cite{JeoPet21}, a crucial step in the study of H\"{o}lder continuity is the following version of concentric ball estimates \begin{align}
    \label{eq:ell-ball-est}
    \int_{B_\rho(x_0)}|\D u|^2\le C\left[\left(\frac\rho r\right)^n+r^\al\right]\int_{B_r(x_0)}|\D u|^2,\quad 0<\rho<r.
\end{align}
It gives Morrey-type estimates, which in turn imply the H\"{o}lder regularity by Morrey space embedding theorem. Unfortunately, this approach is inapplicable to our parabolic case, since the Morrey space embedding, involving only spatial gradients, does not hold in general for functions defined in the parabolic space. The main reason is that the Poincar\'e inequality, which contains merely spatial derivatives, is not satisfied for arbitrary parabolic functions. To overcome this difficulty, we obtain concentric cylinder estimates with functionals 
$$
\vp_{z_0}(r,u):=\int_{Q_r(z_0)}r^{n+4}|\D u|^2+\int_{Q_r(z_0)\times Q_r(z_0)}|u(z)-u(w)|^2\,dzdw
$$ 
instead of standard Dirichlet integrals in \eqref{eq:ell-ball-est}, see Proposition~\ref{prop:par-alm-min-Mor-est-thin}.

\medskip 

The next result concerns the regularity of almost minimizers with variable coefficients.

\begin{maintheorem}
  \label{mthm:II}
 Let $U$ be an almost minimizer of the parabolic $A$-Signorini problem in $Q_1$, with coefficient matrix $A\in H^{\alpha,\alpha/2}(Q_1)$. Then
  \begin{enumerate}
  \item $U\in H^{\si,\si/2}(Q_1)$ for any $0<\si<1$;
  \item $\D U\in H^{\be,\be/2}_{\loc}(Q_1^\pm\cup Q'_1)$ for some $\be=\be(\al,n)>0$.
  \end{enumerate}
\end{maintheorem}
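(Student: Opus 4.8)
The plan is to reduce Theorem~\ref{mthm:II} to Theorem~\ref{mthm:I} by a localization-and-freezing argument: near a point $z_0\in Q'_1$ (or an interior point of $Q_1^\pm$), replace the matrix $A(z)$ by its frozen value $A(z_0)$, and absorb the error $\langle (A(z)-A(z_0))\D V,\D V\rangle = O(|z-z_0|^\al)\|\D V\|^2$ into the multiplicative defect of the almost-minimality inequality. Concretely, I would first perform the affine change of variables $y=A^{1/2}(z_0)^{-1}(x-x_0)$, $s=t-t_0$, which turns the elliptic cylinders $F_r(z_0)$ into parabolic cylinders $Q_r$ and turns $\langle A(z_0)\D U,\D U\rangle$ into $|\D\tilde U|^2$; crucially, since $A(z_0)$ acts only on the spatial variables, this transformation preserves the thin space $\{y_n=0\}$ and maps the zero obstacle to the zero obstacle, so the transformed function $\tilde U$ is again an admissible competitor against spatial‑boundary‑matching functions. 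Then for the transformed matrix $\tilde A(y,s)$, which is still $H^{\al,\al/2}$ with $\tilde A(0,0)=I$, I would check that
\begin{align*}
\int_{Q_r}(1-\tilde\omega(r))|\D \tilde U|^2+2\pa_s\tilde U(\tilde U-\tilde V)\le (1+\tilde\omega(r))\int_{Q_r}|\D \tilde V|^2
\end{align*}
holds with a new gauge $\tilde\omega(r)=C(r^\al+\omega(r))=Cr^\al$, where $C$ depends on $\la,\La,\|A\|_{H^{\al,\al/2}}$ and the size of the neighborhood.

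The verification of that last inequality is the technical heart of the reduction and the step I expect to be the main obstacle. Starting from \eqref{eq:par-var-alm-min-def} with the frozen test matrix, one writes $\langle A\D U,\D U\rangle \ge \langle A(z_0)\D U,\D U\rangle - \|A-A(z_0)\|_\iy|\D U|^2 \ge (1-Cr^\al)\langle A(z_0)\D U,\D U\rangle$ on $F_r(z_0)$ using ellipticity, and symmetrically $\langle A\D V,\D V\rangle \le (1+Cr^\al)\langle A(z_0)\D V,\D V\rangle$; plugging these in and rearranging, the $\omega(r)=r^\al$ defect and the freezing defect combine into a single $O(r^\al)$ defect. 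The subtlety is that the $\pa_tU(U-V)$ term is not multiplied by $A$, so it passes through the coordinate change unchanged (the Jacobian of the spatial map is the constant $\det A^{1/2}(z_0)$, which cancels between the two sides), and one must be careful that the defect multiplying $\int|\D V|^2$ stays of the form $1+\tilde\omega(r)$ with $\tilde\omega$ a genuine gauge function on a possibly shrunken radius range $(0,\tilde r_0)$; this is a routine but slightly delicate bookkeeping of constants, and one should restrict to balls $F_r(z_0)\subset Q_{1-\eta}$ say, so that $r^\al$ is uniformly small.

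Once $\tilde U$ is shown to be an almost minimizer for the (constant‑coefficient, zero‑obstacle, flat thin space) parabolic Signorini problem in a fixed small parabolic cylinder around the origin with gauge $Cr^\al$, Theorem~\ref{mthm:I} applies verbatim and yields $\tilde U\in H^{\si,\si/2}$ for all $\si<1$ near the origin, and $\D\tilde U\in H^{\be,\be/2}_\loc$ on the two sides of the thin space and up to it, with $\be=\be(\al,n)$. Undoing the affine change of variables — which is bi-Lipschitz with constants depending only on $\la,\La$ — preserves parabolic Hölder classes of both $U$ and $\D U$ (for the gradient one uses that $\D_y\tilde U = A^{1/2}(z_0)^T\D_x U$, a fixed linear combination), so the conclusions transfer back to $U$ in a neighborhood of $z_0$. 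Finally, since $z_0$ ranges over all of $Q'_1$ and all interior points of $Q_1^\pm$, and since the interior $H^{\be,\be/2}$ estimate away from the thin space is the classical parabolic regularity for the $A$-Signorini equation (or again a consequence of the frozen-coefficient reduction combined with standard Schauder theory for $\pa_tU-\dv(A\D U)=0$), a covering argument gives the global statements (1) and (2) in Theorem~\ref{mthm:II}. I would remark that the frozen point $z_0$ can be taken on the thin space without loss of generality because the most delicate behavior occurs there; away from $\cM_T$ the almost minimizer solves a uniformly parabolic equation in the almost‑minimizing sense and the interior estimates are softer.
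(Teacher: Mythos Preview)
There are two genuine gaps in your reduction.

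First, the claim that the affine map $y=A^{1/2}(z_0)^{-1}(x-x_0)$ ``preserves the thin space $\{y_n=0\}$'' is false in general. For a symmetric positive definite matrix $A(z_0)$, the hyperplane $\{x_n=0\}$ is mapped to $\{x_n=0\}$ by $A^{1/2}(z_0)^{-1}$ only when $e_n$ is an eigenvector of $A(z_0)$, which is not assumed. The paper handles exactly this point: after the affine map $T_{z_0}$, the thin space becomes a tilted hyperplane $\Pi_{z_0}$, and one must compose with a further orthogonal transformation $O_{z_0}$ (chosen $H^{\al,\al/2}$-continuously in $z_0$) to bring it back to $\{y_n=0\}$. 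Without this rotation your transformed function $\tilde U$ does not satisfy the Signorini constraint on the flat thin space, so Theorem~\ref{mthm:I} is not applicable.

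Second, and more structurally, even after the correct transformation $\bar T_{z_0}=O_{z_0}^{-1}\circ T_{z_0}$, the function $u_{z_0}$ is \emph{not} an almost minimizer in the sense of Definition~\ref{def:alm-min-par-Sig}: it only satisfies the almost parabolic Signorini inequality for cylinders $Q_r$ centered at the origin. For a cylinder $Q_r(w_1)$ centered at $w_1\neq 0$, its preimage under $\bar T_{z_0}$ is an ellipsoidal cylinder $F_r(z_1)$ with $z_1\neq z_0$, and the freezing error there is $|A(z)-A(z_0)|\le C(\|z_1-z_0\|+r)^\al$, which does not tend to $0$ with $r$. Hence $u_{z_0}$ fails the almost-minimality test at $w_1$, and Theorem~\ref{mthm:I} cannot be applied ``verbatim''. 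This is precisely why the paper states the key growth estimates (Propositions~\ref{prop:alm-cal-Mor-Camp-est}, \ref{prop:par-alm-min-Mor-est-thin}, \ref{prop:par-alm-min-grad-Camp-est-thin}) for functions satisfying the almost Signorini property \emph{at a single point}, transforms back to obtain the corresponding estimates for $U$ centered at each $z_0$, and only then runs the Campanato argument globally. Your covering argument at the end would need exactly these single-point estimates to go through.
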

Note that this theorem contains Theorem~\ref{mthm:I} as a particular case when $A(z)\equiv I$. Its proof follows from the combination of the estimates obtained when $A(z)\equiv I$ and the application of the argument in \cite{JeoPetSVG20}. It is also worth noting that Theorem~\ref{mthm:II} even improves some of the results available for the solutions of the parabolic Signorini problem with variable coefficients. For example, we only require the coefficients $A(z)$ to be $H^{\al,\al/2}$ with arbitrary $0<\al<1$, while stronger assumption $A\in W^{1,1}_q$, with $q>n+2$, is typically required in the existing results in the literature, see e.g.\ \cite{ArkUra96}. 

\medskip

The paper is organized as follows.\\*
\begin{itemize}
\item[-] In Section~\ref{sec:alm-cal} we study almost caloric functions (almost minimizers without the thin obstacle), and obtain their regularity. The growth estimates obtained in this section will be used when we study the regularity for almost minimizers in Sections~\ref{sec:alm-min-holder} and \ref{sec:alm-min-grad-holder}. \item[-] Section~\ref{sec:alm-min-holder} is devoted to proving the H\"older regularity of almost minimizers, the first part of Theorem~\ref{mthm:I} (Theorem~\ref{thm:par-alm-min-holder}).
\item[-] In Section~\ref{sec:alm-min-grad-holder} we take advantage of the estimates obtained in Sections~\ref{sec:alm-cal} and \ref{sec:alm-min-holder} to derive the higher regularity of almost minimizers, the second part of Theorem~\ref{mthm:I} (Theorem~\ref{thm:par-alm-min-grad-holder}).
\item[-] Section~\ref{sec:alm-min-var} is dedicated to generalizing the results in the previous sections to the problem with variable coefficients. We prove Theorem~\ref{mthm:II} (Theorems~\ref{thm:var-par-alm-min-holder} and \ref{thm:var-par-alm-min-grad-holder}) by following the line of \cite{JeoPetSVG20}.
\item[-] Finally, in Appendix~\ref{sec:appen-ex}, we provide an example of an almost minimizer.
\end{itemize}

%%%%%%%%%%%%%%%%%%%%%%%%%%%%%%%%%%%%%%%%%%%%%%%%%

\subsection{Notation}\label{sec:notation}
We use the following notation throughout this paper.

For $x_0\in \R^n$ and $r>0$, we denote 
\begin{align*}
    &B_r(x_0)=\{x\in\R^n\,:\,|x-x_0|<r\}:\text{ Euclidean open ball}\\
    &B_r'(x_0)=\{x\in B_r(x_0)\,:\,x_n=0\}:\text{ thin ball}\\
    &B_r^\pm(x_0)=\{x=(x_1,x_2,\dots,x_n)\in B_r(x_0)\,:\,\pm x_n>0\}.%:\text{ Euclidean half-ball}
    \\
\intertext{For a point $z_0=(x_0,t_0)\in \R^{n}\times\R$, we let}
    &Q_r(z_0)=B_r(x_0)\times(t_0-r^2,t_0]:\text{ parabolic cylinder}\\
    &Q'_r(z_0)=B'_r(x_0)\times(t_0-r^2,t_0]:\text{ thin parabolic cylinder}\\
    &Q_r^\pm(z_0)=B_r^\pm(x_0)\times(t_0-r^2,t_0]%:\text{ parabolic half-cylinder}
    \\
    &\pa_pQ_r(z_0)=\pa B_r(x_0)\times[t_0-r^2,t_0]\cup B_r(x_0)\times\{t_0-r^2\}:\text{ parabolic boundary}.
\end{align*}
When $x_0=0$ or $z_0=0$, we simply write $$
B_r=B_r(0),\quad Q_r=Q_r(0).
$$
In general, for a set $E$ in $\R^n$ or $\R^n\times\R$, we denote
$$
E'=E\cap\{x_n=0\},\quad E^\pm=E\cap\{\pm x_n>0\}.
$$
For points $z_0=(x_0,t_0)$, $z_1=(x_1,t_1)$ in $\R^n\times\R$, we denote \begin{align*}
    &\|z_0\|=\left(|x_0|^2+|t_0|\right)^{1/2}\,:\, \text{parabolic norm}\\
    &d_{\rm par}(z_0,z_1)=\|z_0-z_1\|=\left(|x_0-x_1|^2+|t_0-t_1|\right)^{1/2}\,:\,\text{parabolic distance}
\end{align*}
For a bounded open set $\Omega\subset\R^n$, let $\Omega_T:=\Omega\times(0,T]\subset \R^n\times\R$ for some $T>0$. For a function $u$ defined in $\Omega_T$, we indicate the mean value of $u$ in $\Omega_T$ by $$
\mean{u}_{\Omega_T}=\dashint_{\Omega_T}u=\frac1{|\Omega_T|}\int_{\Omega_T}u.
$$
In particular, when $\Omega_T=Q_r(z_0)$ or $\Omega=Q_r$, we simply write $$
\mean{u}_{z_0,r}=\mean{u}_{Q_r(z_0)},\quad \mean{u}_r=\mean{u}_{0,r}=\mean{u}_{Q_r}.
$$
For $k\in \mathbb N$ and $1\le q\le\infty$, let $L^q(\Omega)$, $W^{k,q}(\Omega)$ and $W_0^{k,q}(\Omega)$ be standard Lebesgue and Sobolev spaces. For some parabolic function spaces, we follow the notation used in \cite{DanGarPetTo17}. We let 
$$
\|u\|_{L^q(\Omega_T)}=\left(\int_{\Omega_T}|u(x,t)|^q\,dxdt\right)^{1/q},
$$
and denote by $W^{1,0}_q(\Omega_T)$, $W^{1,1}_q(\Omega_T)$ the spaces of functions $u$ for which the following norms are finite: \begin{align*}
    &\|u\|_{W^{1,0}_q(\Omega_T)}=\|u\|_{L^q(\Omega_T)}+\|\D u\|_{L^q(\Omega_T)},\\
    &\|u\|_{W^{1,1}_q(\Omega_T)}=\|u\|_{L^q(\Omega_T)}+\|\D u\|_{L^q(\Omega_T)}+\|\pa_tu\|_{L^q(\Omega_T)}.
\end{align*}
Next, we define the parabolic H\"older classes $H^{l,l/2}(\Omega_T)$. For $l=m+\gamma$, $m\in\mathbb N\cup\{0\}$, $0<\g\le 1$, we let \begin{align*}
    [u]_{\Omega_T}^{(0)}&=\sup_{(x,t)\in\Omega_T}|u(x,t)|,\\
    [u]_{\Omega_T}^{(m)}&=\sum_{|\boldsymbol{\al}|+2j=m}[\pa_x^{\boldsymbol{\al}}\pa_t^ju]_{\Omega_T}^{(0)},\\
    [u]_{x,\Omega_T}^{(\gamma)}&=\sup_{(x,t),(y,t)\in\Omega_T}\frac{|u(x,t)-u(y,t)|}{|x-y|^\g},\\
    [u]_{t,\Omega_T}^{(\gamma)}&=\sup_{(x,t),(x,s)\in\Omega_T}\frac{|u(x,t)-u(x,s)|}{|t-s|^\g},\\
    [u]_{x,\Omega_T}^{(l)}&=\sum_{|\boldsymbol{\al}|+2j=m}[\pa_x^{\boldsymbol{\al}}\pa_t^ju]_{x,\Omega_T}^{(\g)},\\
    [u]_{t,\Omega_T}^{(l/2)}&=\sum_{m-1\le |\boldsymbol{\al}|+2j\le m}[\pa_x^{\boldsymbol{\al}}\pa_t^ju]_{t,\Omega_T}^{((l-|\boldsymbol{\al}|-2j)/2)},\\
    [u]_{\Omega_T}^{(l)}&=[u]_{x,\Omega_T}^{(l)}+[u]_{t,\Omega_T}^{(l/2)}.
\end{align*}
Then, $H^{l,l/2}(\Omega_T)$ indicates the space of functions $u$ generated by the norm $$
\|u\|_{H^{l,l/2}(\Omega_T)}=\sum_{k=0}^m[u]_{\Omega_T}^{(k)}+[u]_{\Omega_T}^{(l)}.
$$

%%%%%%%%%%%%%%%%%%%%%%%%%%%%%%%%%%%%%%%%%%%%%

\section{Almost Caloric functions}\label{sec:alm-cal}

In this section, we study almost minimizers without the thin obstacle. We call such functions almost caloric functions, and establish their regularity (Theorem~\ref{thm:alm-cal-reg}). The estimates for almost caloric functions (in particular, Proposition~\ref{prop:alm-cal-Mor-Camp-est}) will be used in the study of almost minimizers with obstacle, for these almost minimizers away from the thin space can be seen as almost caloric functions.

\begin{definition}[Almost caloric property at a point]
We say that a function $u\in W^{1,1}_2(Q_1)$ satisfies the \emph{almost caloric property} at $z_0=(x_0,t_0)$ in $Q_1$ if $$
\int_{Q_r(z_0)}(1-r^\al)|\D u|^2+2\pa_tu(u-v)\le (1+r^\al)\int_{Q_r(z_0)}|\D v|^2
$$
for any $Q_r(z_0)\Subset Q_1$ and for any $v\in W^{1,0}_2(Q_r(z_0))$ with $v-u\in L^2(t_0-r^2,t_0;W^{1,2}_0(B_r(x_0))$.
\end{definition}

\begin{definition}[Almost caloric functions]
We say that $u\in W^{1,1}_2(Q_1)$ is \emph{almost caloric} in $Q_1$ if it satisfies the almost caloric property at every $z_0\in Q_1$.
\end{definition}

We first prove the following growth estimates for caloric functions.

\begin{proposition}\label{prop:cal-Mor-Camp-est}
Let $v$ be a caloric function in $Q_r(z_0)$. Then there is $C=C(n)>0$ such that for any $0<\rho<r$,
\begin{align*}
    \int_{Q_\rho(z_0)}|\D v|^2&\le C\left(\frac\rho r\right)^{n+2}\int_{Q_r(z_0)}|\D v|^2,\\
    \int_{Q_\rho(z_0)}|v-\mean{v}_{z_0,\rho}|^2&\le C\left(\frac\rho r\right)^{n+4}\int_{Q_r(z_0)}|v-\mean{v}_{z_0,r}|^2,\\
    \int_{Q_\rho(z_0)}|\D v-\mean{\D v}_{z_0,\rho}|^2&\le C\left(\frac \rho r\right)^{n+4}\int_{Q_r(z_0)}|\D v-\mean{\D v}_{z_0,r}|^2.
\end{align*}
Moreover, for any $z_1\in Q_\rho(z_0)$ and $z_2\in Q_r(z_0)$, $$
\int_{Q_\rho(z_0)}|v-v(z_1)|^2\le C(n)\left(\frac\rho r\right)^{n+4}\int_{Q_r(z_0)}|v-v(z_2)|^2,\quad 0<\rho<r/2.
$$
\end{proposition}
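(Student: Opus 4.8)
The plan is to prove the four estimates for caloric functions by standard interior estimates for the heat equation combined with Poincar\'e-type inequalities, exploiting that derivatives of caloric functions are again caloric.

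First I would establish the first estimate. Since $v$ is caloric in $Q_r(z_0)$, so is $\D v$ (componentwise), and interior estimates for the heat equation give a pointwise bound $\sup_{Q_{r/2}(z_0)}|\D v|^2 \le C(n) r^{-(n+2)}\int_{Q_r(z_0)}|\D v|^2$ (the factor $r^{-(n+2)}$ is the parabolic measure of $Q_r$). For $\rho < r/2$ this yields $\int_{Q_\rho(z_0)}|\D v|^2 \le C(\rho/r)^{n+2}\int_{Q_r(z_0)}|\D v|^2$ directly; for $r/2 \le \rho < r$ the inequality is trivial after adjusting $C$. The second estimate is proved the same way applied to $v-\mean{v}_{z_0,r}$, which is still caloric: interior gradient estimates bound $\sup_{Q_{r/2}}|\D v|$, hence by the parabolic mean value / Poincar\'e inequality $\int_{Q_\rho}|v-\mean{v}_{z_0,\rho}|^2 \le C\rho^{n+4}\sup_{Q_{r/2}}|\D v|^2 \le C(\rho/r)^{n+4}\int_{Q_r}|v-\mean{v}_{z_0,r}|^2$; one has to be mildly careful that the parabolic Poincar\'e inequality must also control the time oscillation, but this follows because $\pa_t v = \Delta v$ is itself bounded by second spatial derivatives, which interior estimates bound in terms of $\int_{Q_r}|v-\mean v|^2$. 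The third estimate follows by applying the second estimate to $\D v$ in place of $v$, again using that $\D v$ is caloric.

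For the final, pointwise-anchored estimate, I would reduce it to the second estimate. Write $v - v(z_1) = (v - \mean{v}_{z_0,\rho}) + (\mean{v}_{z_0,\rho} - v(z_1))$. The first term is controlled by the second estimate. For the constant term, $|\mean{v}_{z_0,\rho} - v(z_1)|^2 \le \dashint_{Q_\rho(z_0)}|v(z) - v(z_1)|^2\,dz$, and since $z_1 \in Q_\rho(z_0) \subset Q_{2\rho}(z_0)$, one bounds $|v(z) - v(z_1)| \le \sup_{Q_{2\rho}(z_0)}|\D v|\cdot d_{\rm par}$-type oscillation $\le C\rho \sup_{Q_{r/2}}|\D v|$ using the mean value theorem along a parabolic path (or simply $\sup_{Q_{2\rho}}|v - \mean{v}_{z_0,2\rho}|$ plus triangle inequality), and $\sup_{Q_{r/2}}|\D v|^2 \le C r^{-(n+4)}\int_{Q_r}|v-\mean{v}_{z_0,r}|^2$. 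Since $v(z_2)$ is a constant, $\int_{Q_r}|v-\mean{v}_{z_0,r}|^2 \le \int_{Q_r}|v - v(z_2)|^2$, which converts the right-hand side to the desired form. Collecting terms gives $\int_{Q_\rho}|v - v(z_1)|^2 \le C(\rho/r)^{n+4}\int_{Q_r}|v-v(z_2)|^2$ for $0<\rho<r/2$.

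The main obstacle I anticipate is the parabolic Poincar\'e inequality used in the second estimate: unlike the elliptic case, controlling $\int_{Q_\rho}|v - \mean v_{z_0,\rho}|^2$ by the gradient alone is false for general functions, so one genuinely needs the equation $\pa_t v = \Delta v$ to handle the time direction. The cleanest route is probably to avoid a general parabolic Poincar\'e inequality altogether and instead use interior estimates to bound $\sup_{Q_{r/2}}(|\D v| + |\pa_t v|)$, then estimate oscillation of $v$ over $Q_\rho$ directly in parabolic distance; this keeps everything self-contained at the cost of slightly more bookkeeping with the scaling exponents. I would also double-check the borderline regime $\rho \in [r/2, r)$ in each estimate, where one simply absorbs constants, and make sure the constant $C$ depends only on $n$ and not on $r$, $\rho$, or $z_0$ — which holds by parabolic scaling and translation invariance of the heat equation.
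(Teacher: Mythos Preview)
Your approach is correct and self-contained, but it differs from the paper's proof in an instructive way. The paper does not prove these estimates from interior $L^\infty$--$L^2$ bounds; instead it simply cites Campanato's lemma \cite{Cam66}*{Lemma~5.1}, which already gives, for caloric $v$ and $0<\rho<r/2$,
\[
\int_{Q_\rho(z_0)}v^2\le C\Bigl(\frac{\rho}{r}\Bigr)^{n+2}\int_{Q_r(z_0)}v^2,\qquad
\int_{Q_\rho(z_0)}|v-v(z_1)|^2\le C\Bigl(\frac{\rho}{r}\Bigr)^{n+4}\int_{Q_r(z_0)}v^2.
\]
Everything then follows by two substitutions: replace $v$ by $\D v$ or by $v-c$ (still caloric), and use that the continuous function $v$ attains its mean value, so $\mean{v}_{z_0,\rho}=v(z_1)$ for some $z_1\in Q_\rho(z_0)$. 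In particular, the fourth inequality drops out in one line by applying the second Campanato estimate to the caloric function $v-v(z_2)$, whereas you decompose through $\mean{v}_{z_0,\rho}$ and then control oscillation by hand.

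What your route buys is that it is independent of the Campanato reference and makes explicit why no ``parabolic Poincar\'e inequality'' is needed: you replace it by the pointwise bound on $\sup(|\D v|+|\pa_t v|)$, which is exactly the right move. What the paper's route buys is brevity and a cleaner handling of the pointwise-anchored estimate; the trick $\mean{v}_{z_0,\rho}=v(z_1)$ in particular lets one avoid your oscillation bookkeeping entirely. One small imprecision in your sketch: in the fourth estimate you write $|v(z)-v(z_1)|\le C\rho\sup|\D v|$, but the time direction requires an additional $C\rho^2\sup|\pa_t v|$ term---you already flagged this in your discussion of the second estimate, and the same fix applies here.
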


\begin{proof}
By \cite{Cam66}*{Lemma~5.1}, we have that for any $z_1\in Q_\rho(z_0)$ and $0<\rho<r/2$, \begin{align*}
    \int_{Q_\rho(z_0)}v^2&\le C(n)\left(\frac\rho r\right)^{n+2}\int_{Q_r(z_0)}v^2,\\
    \int_{Q_\rho(z_0)}|v-v(z_1)|^2&\le C(n)\left(\frac\rho r\right)^{n+4}\int_{Q_r(z_0)}v^2.
\end{align*}
From the facts that $\D v$ and $v-C$ are caloric for any constant $C$ and $\mean{v}_{z_0,\rho}=v(z_1)$ for some $z_1\in Q_\rho(z_0)$, we immediately obtain the proposition for $0<\rho<r/2$. The first three inequalities in the proposition also holds for $0<\rho<r$, since the integrals there are monotone in the radius of the cylinders.
\end{proof}

As explained before, the Morrey-type estimates do not provide the H\"{o}lder continuity in the parabolic setting. Thus, to prove the H\"{o}lder regularity of almost caloric functions, we will derive their growth estimates with the functional
$$\vp_{z_0}(r,u)=\int_{Q_r(z_0)}r^{n+4}|\D u|^2+\int_{Q_r(z_0)\times Q_r(z_0)}|u(z)-u(w)|^2\,dzdw$$
and apply the Campanato-space embedding. For this purpose, we first prove the growth estimates for caloric functions.

\begin{proposition}\label{prop:cal-phi-est}
Let $v$ be a caloric function in $Q_r(z_0)$. Then for $0<\rho<r$, \begin{align}\label{eq:cal-Mor-est}
\vp_{z_0}(\rho,v)\le C(n)\left(\frac\rho r\right)^{2n+6}\vp_{z_0}(r,v).
\end{align}
\end{proposition}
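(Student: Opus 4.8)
The plan is to bound each of the two pieces of $\vp_{z_0}(\rho,v)$ separately using Proposition~\ref{prop:cal-Mor-Camp-est}, and then combine. Write
$$
\vp_{z_0}(\rho,v)=\rho^{n+4}\int_{Q_\rho(z_0)}|\D v|^2+\int_{Q_\rho(z_0)\times Q_\rho(z_0)}|v(z)-v(w)|^2\,dzdw.
$$
For the first term, the Morrey estimate for $\D v$ from Proposition~\ref{prop:cal-Mor-Camp-est} gives $\int_{Q_\rho(z_0)}|\D v|^2\le C(\rho/r)^{n+2}\int_{Q_r(z_0)}|\D v|^2$, so after multiplying by $\rho^{n+4}=(\rho/r)^{n+4}\,r^{n+4}$ we pick up a total factor $(\rho/r)^{2n+6}$ times $r^{n+4}\int_{Q_r(z_0)}|\D v|^2$, which is $\le(\rho/r)^{2n+6}\vp_{z_0}(r,v)$. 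That is exactly the exponent we want.

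For the double integral, I would first replace $|v(z)-v(w)|^2\le 2|v(z)-\mean{v}_{z_0,\rho}|^2+2|v(w)-\mean{v}_{z_0,\rho}|^2$, integrate, and use $|Q_\rho(z_0)|\le C\rho^{n+2}$ to get
$$
\int_{Q_\rho(z_0)\times Q_\rho(z_0)}|v(z)-v(w)|^2\,dzdw\le C\rho^{n+2}\int_{Q_\rho(z_0)}|v-\mean{v}_{z_0,\rho}|^2.
$$
Now apply the second inequality of Proposition~\ref{prop:cal-Mor-Camp-est}: $\int_{Q_\rho(z_0)}|v-\mean{v}_{z_0,\rho}|^2\le C(\rho/r)^{n+4}\int_{Q_r(z_0)}|v-\mean{v}_{z_0,r}|^2$. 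Then I need to compare $\rho^{n+2}\int_{Q_r(z_0)}|v-\mean{v}_{z_0,r}|^2$ against $\vp_{z_0}(r,v)$. Using the reverse direction of the same Cauchy–Schwarz/mean-value trick, $\int_{Q_r(z_0)}|v-\mean{v}_{z_0,r}|^2\le \frac{1}{|Q_r(z_0)|}\int_{Q_r(z_0)\times Q_r(z_0)}|v(z)-v(w)|^2\,dzdw\le C r^{-(n+2)}\int_{Q_r(z_0)\times Q_r(z_0)}|v(z)-v(w)|^2\,dzdw$, since $\mean{v}_{z_0,r}$ minimizes $\int_{Q_r}|v-c|^2$ over constants $c$ and equals the average. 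Collecting the powers of $r$ and $\rho$: $\rho^{n+2}\cdot(\rho/r)^{n+4}\cdot r^{-(n+2)}=(\rho/r)^{2n+6}$, so the double-integral term is $\le C(\rho/r)^{2n+6}\vp_{z_0}(r,v)$ as well. Adding the two bounds finishes the proof.

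The only genuine point requiring care is the bookkeeping of exponents and making sure every appearance of $\rho$ or $r$ is accounted for, together with the two elementary facts that (i) $\int |v-\mean{v}|^2\le \frac{1}{|Q|}\iint|v(z)-v(w)|^2$ and (ii) $\iint|v(z)-v(w)|^2\le 2|Q|\int|v-\mean{v}|^2$ — both immediate from the identity $\frac{1}{|Q|}\iint_{Q\times Q}|v(z)-v(w)|^2\,dzdw=2\int_Q|v-\mean{v}_Q|^2$. There is no analytic obstacle here; the content is entirely in Proposition~\ref{prop:cal-Mor-Camp-est}, and this proposition is just its reformulation in terms of the functional $\vp_{z_0}$ that will be stable under the almost-minimizing perturbation argument in the next results. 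I would also note explicitly that, because the individual terms are monotone in the radius, it suffices in principle to prove the estimate for $0<\rho<r/2$ and then absorb the range $r/2\le\rho<r$ into the constant, which is how the underlying Proposition~\ref{prop:cal-Mor-Camp-est} is itself stated.
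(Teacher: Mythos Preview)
Your proof is correct. Both arguments rest entirely on Proposition~\ref{prop:cal-Mor-Camp-est}, so the approaches are close, but the packaging differs: the paper uses the pointwise inequality $\int_{Q_\rho}|v-v(z_1)|^2\le C(\rho/r)^{n+4}\int_{Q_r}|v-v(z_2)|^2$ (valid for $\rho<r/2$) and then averages in $z_1$ and $z_2$ to produce the double integrals, handling $\rho\ge r/2$ separately by monotonicity. You instead invoke the mean-value version $\int_{Q_\rho}|v-\mean v_{z_0,\rho}|^2\le C(\rho/r)^{n+4}\int_{Q_r}|v-\mean v_{z_0,r}|^2$ together with the variance identity $\frac{1}{|Q|}\iint_{Q\times Q}|v(z)-v(w)|^2\,dz\,dw=2\int_Q|v-\mean v_Q|^2$, and treat the gradient term separately via the Morrey bound. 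A small bonus of your route is that, since the first two inequalities of Proposition~\ref{prop:cal-Mor-Camp-est} already hold for all $0<\rho<r$, the case split $\rho\gtrless r/2$ is not actually needed; your final remark about absorbing $r/2\le\rho<r$ into the constant is therefore superfluous in your own argument.
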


\begin{proof}
Since $s\mapsto \vp_{z_0}(s, v)$  is nondecreasing on $s\in(0,r)$, \eqref{eq:cal-Mor-est} can be obtained easily for $r/2\le \rho<r$: $$
\vp_{z_0}(\rho,v)\le\left(\frac{2\rho}r\right)^{2n+6}\vp_{z_0}(r,v)=2^{2n+6}\left(\frac\rho r\right)^{2n+6}\vp_{z_0}(r,v).
$$
Thus we may assume $\rho<r/2$. Then Proposition~\ref{prop:cal-Mor-Camp-est} gives that for any $z_1\in Q_\rho(z_0)$ and $z_2\in Q_r(z_0)$
$$
\int_{Q_\rho(z_0)}(\rho^2|\D v|^2+|v-v(z_1)|^2)\le C(n)\left(\frac\rho r\right)^{n+4}\int_{Q_r(z_0)}(r^2|\D v|^2+|v-v(z_2)|^2).
$$
Taking averages with respect to $z_1$ over $Q_\rho(z_0)$ and with respect to $z_2$ over $Q_r(z_0)$, we get \begin{align*}
    &\int_{Q_\rho(z_0)}\rho^2|\D v|^2+\frac1{|Q_\rho|}\int_{Q_\rho(z_0)\times Q_\rho(z_0)}|v(z)-v(w)|^2\,dzdw\\
    &\qquad \le C(n)\left(\frac\rho r\right)^{n+4}\left(\int_{Q_r(z_0)}r^2|\D v|^2+\frac1{|Q_r|}\int_{Q_r(z_0)\times Q_r(z_0)}|v(z)-v(w)|^2\,dzdw\right),
\end{align*}
which implies \begin{align*}
    &\int_{Q_\rho(z_0)}\rho^2|\D v|^2+\frac1{\rho^{n+2}}\int_{Q_\rho(z_0)\times Q_\rho(z_0)}|v(z)-v(w)|^2\,dzdw\\
    &\qquad\le C(n)\left(\frac\rho r\right)^{n+4}\left(\int_{Q_r(z_0)}r^2|\D v|^2+\frac1{r^{n+2}}\int_{Q_r(z_0)\times Q_r(z_0)}|v(z)-v(w)|^2\,dzdw\right).
\end{align*}
Multiplying both sides by $\rho^{n+2}$, we conclude \eqref{eq:cal-Mor-est}.
\end{proof}

Next, we use the estimates for caloric functions (Propositions \ref{prop:cal-Mor-Camp-est} and \ref{prop:cal-phi-est}) to obtain the growth estimate for almost caloric functions. To use the result in the variable coefficient case, we state the following proposition for functions satisfying the almost caloric property at a point.

\begin{proposition}\label{prop:alm-cal-Mor-Camp-est} Suppose that $u$ satisfies the almost caloric property at $z_0$ in $Q_r(z_0)\Subset Q_1$. Then for any $0<\rho<r$, 
\begin{equation}
    \label{eq:alm-cal-Mor-est} \vp_{z_0}(\rho, u)\le C(n,\al)\left[\left(\frac\rho r\right)^{2n+6}+r^\al\right]\vp_{z_0}(r,u),
    \end{equation}
    \begin{multline}\label{eq:alm-cal-Camp-est}\int_{Q_\rho(z_0)}|\D u-\mean{\D u}_{z_0,\rho}|^2 \le C(n,\al)\left(\frac\rho r\right)^{n+4}\int_{Q_r(z_0)}|\D u-\mean{\D u}_{z_0,r}|^2\\+C(n)r^\al\int_{Q_r(z_0)}|\D u|^2.\end{multline}
\end{proposition}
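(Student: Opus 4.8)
The plan is to run the classical Campanato-type comparison argument, freezing the almost caloric function $u$ against the caloric replacement on each cylinder. First I would fix $Q_r(z_0)\Subset Q_1$ and let $v$ be the caloric function in $Q_r(z_0)$ with $v-u\in L^2(t_0-r^2,t_0;W^{1,2}_0(B_r(x_0)))$ and $v(\cdot,t_0-r^2)=u(\cdot,t_0-r^2)$; that is, the solution of the heat equation with the same lateral and initial data as $u$. Using $v$ as a competitor in the almost caloric inequality, and then the energy identity for $v$ (namely $\int_{Q_r(z_0)}\D v\D(u-v)+\pa_t v(v-u)=0$, integrating by parts in $t$ and using that $u-v$ vanishes on the parabolic boundary), I expect to obtain a Caccioppoli-type bound of the form
\begin{equation*}
\int_{Q_r(z_0)}|\D(u-v)|^2\le C r^\al\int_{Q_r(z_0)}|\D u|^2,
\end{equation*}
together with an $L^2$-in-time control of $u-v$ giving $\int_{Q_r(z_0)}|u-v|^2\le C r^{\al+2}\int_{Q_r(z_0)}|\D u|^2$ via Poincar\'e in the spatial variable (using that $u-v$ has zero spatial trace on $\pa B_r$). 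The main technical point here is to handle the $\pa_t u(u-v)$ term correctly: since $u$ only has $\pa_t u\in L^2$ and $u-v\in L^2_tW^{1,2}_{0,x}$, the pairing $\int \pa_t u\,(u-v)$ is legitimate, and combined with $\int\pa_t v(v-u)$ it produces $-\frac12\|(u-v)(\cdot,t_0)\|_{L^2}^2\le 0$ plus the boundary term at $t_0-r^2$ which vanishes.

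Next I would feed these comparison estimates into the growth estimates for caloric functions. For \eqref{eq:alm-cal-Mor-est}: split $\vp_{z_0}(\rho,u)$ using $u=v+(u-v)$, apply Proposition~\ref{prop:cal-phi-est} to the $v$-part to gain the factor $(\rho/r)^{2n+6}$, and absorb the $(u-v)$-part using the Caccioppoli and Poincar\'e bounds above, noting that the double-integral term $\int_{Q_\rho\times Q_\rho}|(u-v)(z)-(u-v)(w)|^2$ is controlled by $\rho^{n+2}\int_{Q_\rho}|u-v-\text{const}|^2$ hence by $\rho^{n+2}\cdot r^\al\int_{Q_r}|\D u|^2$, all of which is $\le C r^\al\vp_{z_0}(r,u)$ after inserting the defining weight $r^{n+4}$. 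For \eqref{eq:alm-cal-Camp-est}: write $\D u-\mean{\D u}_{z_0,\rho}=(\D v-\mean{\D v}_{z_0,\rho})+(\D(u-v)-\mean{\D(u-v)}_{z_0,\rho})$, use the third inequality of Proposition~\ref{prop:cal-Mor-Camp-est} on the $v$-term, note $\int_{Q_r(z_0)}|\D v-\mean{\D v}_{z_0,r}|^2\le 2\int_{Q_r(z_0)}|\D u-\mean{\D u}_{z_0,r}|^2+C\int|\D(u-v)|^2$, and bound the $(u-v)$-term by $\int_{Q_\rho}|\D(u-v)|^2\le\int_{Q_r}|\D(u-v)|^2\le C r^\al\int_{Q_r}|\D u|^2$.

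The step I expect to be the main obstacle is the rigorous justification of the comparison/Caccioppoli estimate, i.e.\ producing $\int_{Q_r(z_0)}|\D(u-v)|^2\le Cr^\al\int_{Q_r(z_0)}|\D u|^2$ with the correct power of $r$ and no lower-order junk. The delicate points are: (i) existence and admissibility of the caloric competitor $v$ with matching initial trace, which requires knowing $u(\cdot,t_0-r^2)\in L^2(B_r(x_0))$ — available since $u\in W^{1,1}_2$; (ii) expanding $(1\mp r^\al)$ and rearranging \eqref{eq:par-alm-min-def} together with the energy identity for $v$ to isolate $\int|\D(u-v)|^2$, where one must control cross terms like $r^\al\int\D u\,\D v$ by Young's inequality and absorb $r^\al\int|\D v|^2\le r^\al\int|\D u|^2$ (since $v$ minimizes Dirichlet energy); and (iii) checking that the time-derivative terms combine into a sign-favorable boundary contribution rather than an uncontrolled bulk term. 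Once this is in place, the remaining manipulations are the routine triangle-inequality/averaging bookkeeping sketched above, exactly parallel to Proposition~\ref{prop:cal-phi-est} but with the extra $r^\al$ error carried along.
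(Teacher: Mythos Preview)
Your proposal is correct and follows essentially the same route as the paper: take the caloric replacement $v$, derive the comparison estimate $\int_{Q_r(z_0)}|\D(u-v)|^2\le Cr^\al\int_{Q_r(z_0)}|\D u|^2$ from the almost caloric inequality and the energy identity for $v$, upgrade to $\int|u-v|^2\le Cr^{2+\al}\int|\D u|^2$ by spatial Poincar\'e, and then split and invoke Propositions~\ref{prop:cal-Mor-Camp-est}--\ref{prop:cal-phi-est} exactly as you describe.

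One correction: your parenthetical in (ii), ``$r^\al\int|\D v|^2\le r^\al\int|\D u|^2$ since $v$ minimizes Dirichlet energy,'' is false --- the caloric replacement does \emph{not} minimize the spatial Dirichlet integral among competitors with the same parabolic boundary data. The paper instead first obtains $\int|\D(u-v)|^2\le r^\al\int(|\D u|^2+|\D v|^2)$, then writes $\int|\D v|^2\le 2\int|\D u|^2+2\int|\D(u-v)|^2$ and absorbs the last term for $r\le r_0(\al)$ small to get $\int|\D v|^2\le 4\int|\D u|^2$; the full range $0<\rho<r<1$ is then recovered by an elementary case split ($0<\rho<r_0<r$ and $r_0\le\rho<r$).
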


\begin{proof}
Let $v$ be the caloric function in $Q_r(z_0)$ with $v=u$ on $\pa_pQ_r(z_0)$. Then, by the almost caloric property of $u$,
\begin{align*}
    \hspace{2em}&\hspace{-2em}\int_{Q_r(z_0)}|\D(u-v)|^2\\
    &=\int_{Q_r(z_0)}\left(|\D u|^2-|\D v|^2\right)+2\int_{Q_r(z_0)}\D v\D(v-u)\\
    &\le \left(2\int_{Q_r(z_0)}\pa_tu(v-u)+r^\al\int_{Q_r(z_0)}(|\D u|^2+|\D v|^2)\right)-2\int_{Q_r(z_0)}\pa_t v(v-u)\\
    &= 2\int_{Q_r(z_0)}\pa_t(u-v)(v-u)+r^\al\int_{Q_r(z_0)}(|\D u|^2+|\D v|^2)\\
    &=-\int_{Q_r(z_0)}\pa_t(|u-v|^2)+r^\al\int_{Q_r(z_0)}(|\D u|^2+|\D v|^2)\\
    &\le r^\al\int_{Q_r(z_0)}(|\D u|^2+|\D v|^2).\end{align*}
Thus, 
\begin{align*}
    \int_{Q_r(z_0)}|\D v|^2&\le 2\int_{Q_r(z_0)}|\D u|^2+2\int_{Q_r(z_0)}|\D(u-v)|^2\\
    &\le 2\int_{Q_r(z_0)}|\D u|^2+2r^\al\int_{Q_r(z_0)}(|\D u|^2+|\D v|^2),
\end{align*}
and hence $$\int_{Q_r(z_0)}|\D v|^2\le 4\int_{Q_r(z_0)}|\D u|^2,\quad 0<r\le r_0,
$$
for some $r_0=r_0(\al)>0$ small.
Combining the above estimates, \begin{align}\label{eq:cal-alm-cal-grad-diff-est}
\int_{Q_r(z_0)}|\D(u-v)|^2\le r^\al\int_{Q_r(z_0)}(|\D u|^2+|\D v|^2)\le 5r^\al\int_{Q_r(z_0)}|\D u|^2.
\end{align}
Moreover, applying the Poincar\'e inequality to $u(\cdot,t)-v(\cdot,t)\in W_0^{1,2}(B_{r}(x_0))$ for each $t\in(t_0-r^2, t_0)$ and using \eqref{eq:cal-alm-cal-grad-diff-est}, we obtain
\begin{align}\label{eq:cal-alm-cal-poincare}
\begin{split}
    \int_{Q_{r}(z_0)}|u-v|^2 &= \int_{t_0-r^2}^{t_0}\int_{B_r(x_0)}|u(x,t)-v(x,t)|^2\,dxdt\\
    &\le \int_{t_0-r^2}^{t_0}Cr^2\int_{B_r(x_0)}|\D(u-v)|^2\,dxdt\\
    &\le Cr^2\int_{Q_r(z_0)}|\D(u-v)|^2\le Cr^{2+\al}\int_{Q_r(z_0)}|\D u|^2.
\end{split}\end{align}
In addition, we observe that
\begin{align*}
    \vp_{z_0}(\rho,u)&=\int_{Q_\rho(z_0)}\rho^{n+4}|\D u|^2+\int_{Q_\rho(z_0)\times Q_\rho(z_0)}|u(z)-u(w)|^2\,dzdw\\
    &\begin{multlined}\le 2\int_{Q_\rho(z_0)}\rho^{n+4}|\D v|^2+2\int_{Q_\rho(z_0)}\rho^{n+4}|\D(u-v)|^2\\
    \qquad +3\int_{Q_\rho(z_0)\times Q_\rho(z_0)}(|v(z)-v(w)|^2+|u(z)-v(z)|^2+|u(w)-v(w)|^2)\,dzdw\end{multlined}\\
    &\le 3\vp_{z_0}(\rho,v)+2\int_{Q_\rho(z_0)}\rho^{n+4}|\D(u-v)|^2+C(n)\rho^{n+2}\int_{Q_\rho(z_0)}|u-v|^2,
 \end{align*}   
and similarly    
\begin{align*}
        \vp_{z_0}(r,v)\le 3\vp_{z_0}(r,u)+2\int_{Q_r(z_0)}r^{n+4}|\D(u-v)|^2+C(n)r^{n+2}\int_{Q_r(z_0)}|u-v|^2.
\end{align*}
These inequalities, together with \eqref{eq:cal-Mor-est}, \eqref{eq:cal-alm-cal-grad-diff-est} and \eqref{eq:cal-alm-cal-poincare}, yield that for $0<\rho<r\le r_0$,    
    \begin{align*}
    \vp_{z_0}(\rho,u)&\begin{multlined}[t]\le C(n)\left(\frac\rho r\right)^{2n+6}\vp_{z_0}(r,v)+2\int_{Q_\rho(z_0)}\rho^{n+4}|\D(u-v)|^2\\
    +C(n)\rho^{n+2}\int_{Q_\rho(z_0)}|u-v|^2\end{multlined}\\*
    &\begin{multlined}\le C(n)\left(\frac\rho r\right)^{2n+6}\vp_{z_0}(r,u)+C(n)r^{n+4}\int_{Q_r(z_0)}|\D(u-v)|^2\\
    +C(n)r^{n+2}\int_{Q_r(z_0)}|u-v|^2\end{multlined}\\
    &\le C(n)\left(\frac\rho r\right)^{2n+6}\vp_{z_0}(r,u)+C(n)r^{n+4+\al}\int_{Q_r(z_0)}|\D u|^2\\
    &\le C(n)\left[\left(\frac\rho r\right)^{2n+6}+r^\al\right]\vp_{z_0}(r,u).
\end{align*}
This implies \eqref{eq:alm-cal-Mor-est} when $0<\rho<r\le r_0(\al)$. To obtain \eqref{eq:alm-cal-Mor-est} for any $0<\rho<r<1$, we further consider the cases when $$
0<\rho<r_0<r<1\quad\text{or}\quad r_0\le \rho<r<1.
$$
If $0<\rho<r_0<r<1$, then \begin{align*}
    \vp_{z_0}(\rho,u)&\le C(n)\left[\left(\frac\rho{r_0}\right)^{2n+6}+r_0^\al\right]\vp_{z_0}(r_0,u)\le C(n,\al)\left[\rho^{2n+6}+r_0^\al\right]\vp_{z_0}(r_0,u)\\
    &\le C(n,\al)\left[\left(\frac\rho r\right)^{2n+6}+r^\al\right]\vp_{z_0}(r,u).
\end{align*}
On the other hand, if $r_0\le \rho<r<1$, then it simply follows from $\rho/r\ge r_0$ that $$
\vp_{z_0}(\rho, u)\le \left(\frac1{r_0}\right)^{2n+6}\left(\frac\rho r\right)^{2n+6}\vp_{z_0}(r,u)\le C(n,\al)\left(\frac\rho r\right)^{2n+6}\vp_{z_0}(r,u).
$$

Next, we prove \eqref{eq:alm-cal-Camp-est}. Combining Proposition~\ref{prop:cal-Mor-Camp-est} with \eqref{eq:cal-alm-cal-grad-diff-est} gives that for $0<\rho<r\le r_0(\al)$,
\begin{align*}
    \hspace{2em}&\hspace{-2em}\int_{Q_\rho(z_0)}|\D u-\mean{\D u}_{z_0,\rho}|^2\\
    &\le  3\int_{Q_\rho(z_0)}|\D v-\mean{\D v}_{z_0,\rho}|^2+6\int_{Q_\rho(z_0)}|\D(u-v)|^2\\
    &\le C\left(\frac\rho r\right)^{n+4}\int_{Q_r(z_0)}|\D v-\mean{\D v}_{z_0,r}|^2+6\int_{Q_\rho(z_0)}|\D(u-v)|^2\\
    &\le C\left(\frac\rho r\right)^{n+4}\int_{Q_r(z_0)}|\D u-\mean{\D u}_{z_0,r}|^2+C\int_{Q_r(z_0)}|\D(u-v)|^2\\
    &\le C(n)\left(\frac\rho r\right)^{n+4}\int_{Q_r(z_0)}|\D u-\mean{\D u}_{z_0,r}|^2+C(n)r^\al\int_{Q_r(z_0)}|\D u|^2.
\end{align*}
To extend this inequality from $0<\rho<r\le r_0(\al)$ to $0<\rho<r<1$, we argue as above (by considering the cases when $0<\rho<r_0<r<1$ and $r_0\le\rho<r<1$). If $0<\rho<r_0<r<1$, then 
\begin{align*}
   \hspace{2em} &\hspace{-2em}\int_{Q_\rho(z_0)}|\D u-\mean{\D u}_{z_0,\rho}|^2\\
    &\le C(n)\left(\frac\rho{r_0}\right)^{n+4}\int_{Q_{r_0}(z_0)}|\D u-\mean{\D u}_{z_0,r_0}|^2+C(n)r_0^\al\int_{Q_{r_0}(z_0)}|\D u|^2\\
    &\le C(n,\al)\left(\frac\rho{r}\right)^{n+4}\int_{Q_{r}(z_0)}|\D u-\mean{\D u}_{z_0,r}|^2+C(n)r^\al\int_{Q_{r}(z_0)}|\D u|^2.
\end{align*}
Finally, if $r_0\le \rho<r<1$, then using $\rho/r\ge r_0$, we obtain \begin{align*}
    \int_{Q_\rho(z_0)}|\D u-\mean{\D u}_{z_0,\rho}|^2&\le \left(\frac1{r_0}\right)^{n+4}\left(\frac\rho r\right)^{n+4}\int_{Q_r(z_0)}|\D u-\mean{\D u}_{z_0,r}|^2\\
    &\le C(n,\al)\left(\frac\rho r\right)^{n+4}\int_{Q_r(z_0)}|\D u-\mean{\D u}_{z_0,r}|^2.\qedhere
\end{align*}

\end{proof}

The following is a useful lemma that we will use below, whose proof can be found e.g.\ in
\cite{HanLin97}.
\begin{lemma}\label{lem:HL}
  Let $r_0>0$ be a positive number and let
  $\phi:(0,r_0)\to (0, \infty)$ be a nondecreasing function. Let $a$,
  $\beta$, and $\gamma$ be such that $a>0$, $\gamma >\beta >0$. There
  exist two positive numbers $\e=\e(a,\gamma,\beta)$,
  $c=c(a,\gamma,\beta)$ such that, if
$$
\phi(\rho)\le
a\Bigl[\Bigl(\frac{\rho}{r}\Bigr)^{\gamma}+\e\Bigr]\phi(r)+b\, r^{\be}
$$ for all $\rho$, $r$ with $0<\rho\leq r<r_0$, where $b\ge 0$,
then one also has, still for $0<\rho<r<r_0$,
$$
\phi(\rho)\le
c\Bigl[\Bigl(\frac{\rho}{r}\Bigr)^{\be}\phi(r)+b\rho^{\be}\Bigr].
$$
\end{lemma}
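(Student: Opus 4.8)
The statement to prove is Lemma~\ref{lem:HL}, the classical iteration lemma. Here is my plan.

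\medskip

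\textbf{Setup and strategy.} The plan is to fix a dilation ratio $\tau\in(0,1)$ and iterate the hypothesis on the geometric sequence of radii $\tau^k r$. The exponent $\gamma$ is strictly larger than $\beta$, so we have room: we first choose $\tau$ small enough that $\tau^\gamma$ beats $\tau^\beta$ by a definite factor (say $a\tau^\gamma\le \tfrac12\tau^\beta$, i.e.\ $\tau^{\gamma-\beta}\le \tfrac{1}{2a}$), and \emph{then} choose $\e=\e(a,\gamma,\beta)$ small enough (depending on the $\tau$ just fixed, e.g.\ $a\e\le \tfrac12\tau^\beta$) so that the hypothesis at scale $\tau^k r$ yields
$$
\phi(\tau^{k+1}r)\le a\bigl[\tau^\gamma+\e\bigr]\phi(\tau^k r)+b\,(\tau^k r)^\beta\le \tau^\beta\phi(\tau^k r)+b\,(\tau^k r)^\beta.
$$
This is the single-step contraction. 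The whole point of the two-stage choice is that $\tau$ is pinned down using only $a,\gamma,\beta$, and then $\e$ is pinned down from $\tau$, so indeed $\e,c$ depend only on $a,\gamma,\beta$ as claimed.

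\medskip

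\textbf{The iteration.} Write $\psi_k:=\phi(\tau^k r)/(\tau^k r)^\beta$. Dividing the one-step inequality by $(\tau^{k+1}r)^\beta$ gives $\psi_{k+1}\le \psi_k+b\tau^{-\beta}$, hence by summing, $\psi_k\le \psi_0+k b\tau^{-\beta}$; but this linear-in-$k$ bound is too weak. Instead I divide the one-step inequality by $(\tau^{k}r)^\beta$ to keep the contraction visible: setting $\widetilde\psi_k:=\phi(\tau^k r)/(\tau^k r)^\beta$ is the same thing, so let me instead argue directly. From $\phi(\tau^{k+1}r)\le \tau^\beta\phi(\tau^k r)+b(\tau^k r)^\beta$, an easy induction gives
$$
\phi(\tau^k r)\le \tau^{k\beta}\phi(r)+b\,r^\beta\tau^{(k-1)\beta}\sum_{j=0}^{k-1}1\cdot\tau^{\,0}\ \ \text{—}
$$
more carefully: unrolling, $\phi(\tau^k r)\le \tau^{k\beta}\phi(r)+b\sum_{j=0}^{k-1}\tau^{(k-1-j)\beta}(\tau^j r)^\beta = \tau^{k\beta}\phi(r)+b\,r^\beta\tau^{(k-1)\beta}k$. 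The stray factor $k$ is harmless once we convert back to continuous radii, because $k\tau^{(k-1)\beta}$ is comparable to $\tau^{k\beta'}$ for any $\beta'<\beta$; but to get the clean statement with exponent exactly $\beta$ it is better to absorb differently. The standard fix: replace $\beta$ by using that $\sum_{j\ge 0}\tau^{j(\beta-\beta)}$ diverges, so instead one proves the bound with a slightly smaller exponent and then notes $b\rho^{\beta}$ on the right already allows slack — actually the cleanest route is to choose $\tau$ so that $a\tau^\gamma+a\e\le \tau^{\beta_0}$ for some $\beta_0$ with $\beta<\beta_0<\gamma$; then unrolling gives $\phi(\tau^k r)\le \tau^{k\beta_0}\phi(r)+b\,r^\beta\sum_{j=0}^{k-1}\tau^{(k-1-j)\beta_0}\tau^{j\beta}$, and since $\beta_0>\beta$ the geometric sum is bounded by $C\tau^{k\beta}$, giving $\phi(\tau^k r)\le C[\tau^{k\beta}\phi(r)+b\,(\tau^k r)^\beta]$ with $C=C(a,\gamma,\beta)$.

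\medskip

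\textbf{From discrete to continuous radii.} Finally, for arbitrary $0<\rho<r<r_0$, pick $k\ge 0$ with $\tau^{k+1}r<\rho\le \tau^k r$. Using monotonicity of $\phi$ (so $\phi(\rho)\le \phi(\tau^k r)$) and $\tau^k r<\rho/\tau$, we get
$$
\phi(\rho)\le \phi(\tau^k r)\le C\bigl[\tau^{k\beta}\phi(r)+b(\tau^k r)^\beta\bigr]\le C\tau^{-\beta}\Bigl[\Bigl(\tfrac{\rho}{r}\Bigr)^\beta\phi(r)+b\,\rho^\beta\Bigr],
$$
which is the desired conclusion with $c=C\tau^{-\beta}=c(a,\gamma,\beta)$. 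I expect the main obstacle to be purely bookkeeping: getting the exponent in the final estimate to be \emph{exactly} $\beta$ rather than something slightly smaller, which is why the two-stage choice of $\tau$ (forcing the one-step contraction factor to be $\tau^{\beta_0}$ with $\beta_0$ strictly between $\beta$ and $\gamma$) is the key device that makes the geometric series on the right-hand side sum to a constant multiple of $(\tau^k r)^\beta$. Since this is a standard lemma, I would simply cite \cite{HanLin97} for the full details, as the excerpt already does.
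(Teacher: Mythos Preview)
The paper does not give a proof of this lemma; it simply cites \cite{HanLin97}. Your proposal does the same, and the sketch you supply (choose $\beta_0\in(\beta,\gamma)$, then $\tau\in(0,1)$ and $\e$ so that $a(\tau^\gamma+\e)\le\tau^{\beta_0}$, iterate on $\tau^k r$, sum the resulting geometric series, and pass to general $\rho$ by monotonicity) is precisely the standard argument from that reference, so there is nothing to compare.
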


The following is the parabolic version of Campanato space embedding, whose proof is similar to its elliptic version (e.g. \cite{HanLin97}*{Theorem~3.1}).

\begin{lemma}\label{lem:par-camp-space-embed}
Let $u\in L^2(Q_1)$ and $M$ be such that $\|u\|_{L^2(Q_1)}\le M$. Suppose that for some $\si\in (0,1)$ and $r_0>0$ $$
\int_{Q_r(z)}|u-\mean{u}_{z,r}|^2\le M^2r^{n+2+2\si}
$$
for any $Q_r(z)\Subset Q_1$ with $r<r_0$. Then $u\in H^{\si,\si/2}(Q_1)$, and for any $K\Subset Q_1$ $$
\|u\|_{H^{\si,\si/2}(K)}\le C(n,K,\si,r_0)M.
$$
\end{lemma}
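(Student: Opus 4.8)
This is the parabolic analogue of the Campanato space embedding, so the structure parallels the elliptic proof (e.g.\ \cite{HanLin97}*{Theorem~3.1}), with the "dimension" $n$ replaced by the parabolic dimension $n+2$. No iteration lemma is needed here: the hypothesis already provides the clean decay $\int_{Q_r(z)}|u-\mean{u}_{z,r}|^2\le M^2 r^{n+2+2\si}$, so the argument is a direct estimate on the averages $\mean{u}_{z,r}$. The plan is to (i) show the averages converge to a pointwise-defined representative $\tilde u(z):=\lim_{r\to 0^+}\mean{u}_{z,r}$ with a quantitative modulus, (ii) identify $\tilde u=u$ a.e.\ and get the $L^\infty$ bound, and (iii) upgrade to the parabolic H\"older estimate.

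\textbf{Step 1: convergence of averages and pointwise modulus.} For $z$ with $Q_{2r}(z)\Subset Q_1$ and $2r<r_0$, comparing consecutive dyadic scales and using $Q_r(z)\subset Q_{2r}(z)$ together with the hypothesis,
$$
|\mean{u}_{z,r}-\mean{u}_{z,2r}|^2\le \dashint_{Q_r(z)}|u-\mean{u}_{z,2r}|^2\le \frac{C(n)}{r^{n+2}}\int_{Q_{2r}(z)}|u-\mean{u}_{z,2r}|^2\le C(n)M^2 r^{2\si}.
$$
Summing the resulting geometric series over $r_j=2^{-j}r$ shows $\{\mean{u}_{z,2^{-j}r}\}_j$ is Cauchy; call the limit $\tilde u(z)$, and comparing a general radius $\rho$ with the neighboring dyadic one gives
$$
|\mean{u}_{z,\rho}-\tilde u(z)|\le C(n,\si)M\rho^\si,\qquad |\mean{u}_{z,\rho}-\mean{u}_{z,r}|\le C(n,\si)M r^\si\quad(0<\rho<r),
$$
valid for all $\rho,r$ with $Q_r(z)\Subset Q_1$ and $r<r_0$. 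By the Lebesgue differentiation theorem $\mean{u}_{z,\rho}\to u(z)$ at a.e.\ $z$, so $\tilde u=u$ a.e.\ and we work with the representative $\tilde u$. Fixing $\rho=\rho_K$ a small fraction of $\dist_{\rm par}(K,\pa_p Q_1)$ (and $\rho_K<r_0$), we also get for $z\in K$
$$
|\tilde u(z)|\le |\mean{u}_{z,\rho_K}|+C M\rho_K^\si\le |Q_{\rho_K}|^{-1/2}\|u\|_{L^2(Q_1)}+CM\rho_K^\si\le C(n,K,\si,r_0)M,
$$
giving the $L^\infty(K)$ bound.

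\textbf{Step 2: parabolic H\"older estimate.} Let $z_1=(x_1,t_1),z_2=(x_2,t_2)\in K$ and set $d=\|z_1-z_2\|$; if $d$ is not small the estimate is immediate from Step~1, so assume $d<c_0\dist_{\rm par}(K,\pa_pQ_1)$ and $2Cd<r_0$. Without loss of generality $t_1\le t_2$. Since $|x_1-x_2|\le d$ and $|t_2-t_1|\le d^2$, for a suitable dimensional $C$ one has $Q_d(z_1)\subset Q_{2Cd}(z_1)\cap Q_{2Cd}(z_2)$ and $Q_{2Cd}(z_i)\Subset Q_1$; here it is essential that the common sub-cylinder be anchored at the \emph{earlier} time $t_1$, because parabolic cylinders only extend backward in time. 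Writing $Q:=Q_d(z_1)$ and using $\dashint_Q|u-c|^2\ge|\mean{u}_Q-c|^2$ together with the hypothesis on $Q_{2Cd}(z_i)$,
$$
|\mean{u}_{z_i,2Cd}-\mean{u}_{Q}|^2\le\dashint_Q|u-\mean{u}_{z_i,2Cd}|^2\le \frac{C(n)}{d^{n+2}}\int_{Q_{2Cd}(z_i)}|u-\mean{u}_{z_i,2Cd}|^2\le C(n)M^2 d^{2\si}.
$$
Combining with the triangle inequality through $\mean{u}_Q$ and with $|\tilde u(z_i)-\mean{u}_{z_i,2Cd}|\le CM d^\si$ from Step~1,
$$
|\tilde u(z_1)-\tilde u(z_2)|\le C(n,K,\si,r_0)\,M\,\|z_1-z_2\|^{\si}.
$$
Specializing to $z_2=(x_2,t_1)$ gives $[\tilde u]^{(\si)}_{x,K}\le CM$, and to $z_2=(x_1,t_2)$ gives $|\tilde u(x_1,t_1)-\tilde u(x_1,t_2)|\le CM|t_1-t_2|^{\si/2}$, i.e.\ $[\tilde u]^{(\si/2)}_{t,K}\le CM$; together with the $L^\infty$ bound this yields $\|u\|_{H^{\si,\si/2}(K)}\le C(n,K,\si,r_0)M$.

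\textbf{Main obstacle.} The only genuinely new point compared with the elliptic case is the one-sided-in-time geometry of the parabolic cylinders $Q_r(z_0)=B_r(x_0)\times(t_0-r^2,t_0]$: one must order the two points in time and anchor the common sub-cylinder at the earlier point (and enlarge the surrounding cylinders by a fixed factor) so that a single cylinder of size comparable to $d$ sits inside $Q_{2Cd}(z_1)\cap Q_{2Cd}(z_2)$. The passage from dyadic to arbitrary radii and the verification $Q_{2Cd}(z_i)\Subset Q_1$ for $z_i\in K$ are routine.
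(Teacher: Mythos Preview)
Your proof is correct and follows exactly the approach the paper indicates: the paper does not give a detailed argument but simply states that the proof ``is similar to its elliptic version (e.g.\ \cite{HanLin97}*{Theorem~3.1}),'' and your write-up is a faithful parabolic adaptation of that argument---dyadic comparison of averages, identification of the limit via Lebesgue differentiation, and the two-point H\"older estimate via a common sub-cylinder. Your handling of the one-sided-in-time geometry (anchoring the common cylinder at the earlier time point) is the right additional observation and is carried out correctly.
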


\begin{theorem}\label{thm:alm-cal-reg}
Let $u$ be an almost caloric function in $Q_1$. Then   \begin{enumerate}
  \item $u\in H^{\si,\si/2}(Q_1)$ for all $\si\in(0,1)$;
  \item $\D u\in H^{\al/2, \al/4}(Q_1)$.
  \end{enumerate}
\end{theorem}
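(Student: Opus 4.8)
The plan is to combine the growth estimates for almost caloric functions from Proposition~\ref{prop:alm-cal-Mor-Camp-est} with the iteration lemma (Lemma~\ref{lem:HL}) and the parabolic Campanato embedding (Lemma~\ref{lem:par-camp-space-embed}). The two parts are handled separately but use the same machinery.

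\medskip

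\textbf{Part (1): $H^{\si,\si/2}$-regularity.} First I would fix $z_0\in Q_{1/2}$ (say) and $r<\operatorname{dist}_{\rm par}(z_0,\pa_pQ_1)$, and apply the Morrey-type estimate \eqref{eq:alm-cal-Mor-est}: for $0<\rho<r$,
$$
\vp_{z_0}(\rho,u)\le C(n,\al)\Bigl[\Bigl(\tfrac\rho r\Bigr)^{2n+6}+r^\al\Bigr]\vp_{z_0}(r,u).
$$
The term $r^\al$ here is not of the clean form $\e$ required by Lemma~\ref{lem:HL}, so the first move is to absorb it: given any target exponent $\si\in(0,1)$, choose a small radius $r_\si$ so that $C(n,\al)r_\si^\al<\e(C(n,\al),2n+6,n+2+2\si)$ (the $\e$ from Lemma~\ref{lem:HL} with $\g=2n+6$, $\be=n+2+2\si$; note $n+2+2\si<2n+6$ since $n\ge2$, actually we only need $\be<\g$ which holds as $2\si<n+4$). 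Then for $r<r_\si$ the estimate reads $\vp_{z_0}(\rho,u)\le C[(\rho/r)^{2n+6}+\e]\vp_{z_0}(r,u)$, and Lemma~\ref{lem:HL} (with $b=0$) yields $\vp_{z_0}(\rho,u)\le c(\rho/r)^{n+2+2\si}\vp_{z_0}(r,u)$. Fixing $r=r_\si$ and bounding $\vp_{z_0}(r_\si,u)$ by $C\|u\|_{W^{1,1}_2(Q_1)}^2$ uniformly in $z_0\in Q_{1/2}$, we get $\vp_{z_0}(\rho,u)\le C M^2\rho^{n+2+2\si}$ for all small $\rho$, where $M$ depends on $\|u\|_{W^{1,1}_2}$. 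Since
$$
\int_{Q_\rho(z_0)\times Q_\rho(z_0)}|u(z)-u(w)|^2\,dzdw\ge c(n)\rho^{n+2}\int_{Q_\rho(z_0)}|u-\mean u_{z_0,\rho}|^2,
$$
this bounds $\rho^{n+2}\int_{Q_\rho(z_0)}|u-\mean u_{z_0,\rho}|^2$ by $CM^2\rho^{n+2+2\si}$, i.e.\ $\int_{Q_\rho(z_0)}|u-\mean u_{z_0,\rho}|^2\le CM^2\rho^{n+2+2\si}$. Lemma~\ref{lem:par-camp-space-embed} then gives $u\in H^{\si,\si/2}_{\loc}(Q_1)$, and a covering/rescaling argument upgrades this to $Q_1$ (or one simply works on arbitrary $K\Subset Q_1$ as in the statement of Lemma~\ref{lem:par-camp-space-embed}). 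Since $\si\in(0,1)$ was arbitrary, part (1) follows.

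\medskip

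\textbf{Part (2): $\D u\in H^{\al/2,\al/4}$.} For each fixed component $\pa_iu$ I would apply the Campanato-type estimate \eqref{eq:alm-cal-Camp-est}:
$$
\int_{Q_\rho(z_0)}|\D u-\mean{\D u}_{z_0,\rho}|^2\le C(n,\al)\Bigl(\tfrac\rho r\Bigr)^{n+4}\int_{Q_r(z_0)}|\D u-\mean{\D u}_{z_0,r}|^2+C(n)r^\al\int_{Q_r(z_0)}|\D u|^2.
$$
To run Lemma~\ref{lem:HL} on $\phi(\rho):=\int_{Q_\rho(z_0)}|\D u-\mean{\D u}_{z_0,\rho}|^2$ I first need the inhomogeneous term in the form $b\,r^\be$ with $b$ a fixed constant. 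This requires an a priori bound $\int_{Q_r(z_0)}|\D u|^2\le C M^2 r^{n+2}$ uniform in $z_0$ and small $r$ — but this is exactly the Morrey estimate \eqref{eq:alm-cal-Mor-est} again (taking $\si$ close to $0$, or directly: $\vp_{z_0}(\rho,u)\le C\rho^{n+2+2\cdot 0^+}\vp_{z_0}(r_0,u)$ forces $\rho^{n+4}\int_{Q_\rho}|\D u|^2\le CM^2\rho^{n+2+\al'}$ for small $\al'$, hence $\int_{Q_\rho}|\D u|^2\le CM^2\rho^{\al'-2}$ — wait, this is the delicate point). Let me restate: from part (1)'s argument applied with any $\si<\al/2$ we get $\vp_{z_0}(\rho,u)\le CM^2\rho^{n+2+2\si}$, and since $\vp_{z_0}(\rho,u)\ge \rho^{n+4}\int_{Q_\rho(z_0)}|\D u|^2$ we obtain $\int_{Q_\rho(z_0)}|\D u|^2\le CM^2\rho^{2\si-2}$; since $2\si<\al<1$ this is \emph{not} quite the $\rho^{n+2}$-decay one wants. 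Thus I expect the genuine obstacle to be establishing the right decay $\int_{Q_\rho(z_0)}|\D u|^2\lesssim M^2\rho^{n}$ or better for the gradient before Lemma~\ref{lem:HL} can be fed. The resolution: apply Lemma~\ref{lem:HL} to \eqref{eq:alm-cal-Camp-est} \emph{together with} the already-known bound on $\int|\D u|^2$ in a bootstrap — or, more cleanly, follow the scheme where one first notes $\D u\in L^2$ with the Morrey bound $\int_{Q_\rho}|\D u-\mean{\D u}_{z_0,\rho}|^2\le\int_{Q_\rho}|\D u|^2$ and iterates \eqref{eq:alm-cal-Camp-est} directly: write $\psi(\rho)=\int_{Q_\rho(z_0)}|\D u-\mean{\D u}_{z_0,\rho}|^2$, observe $\int_{Q_r(z_0)}|\D u|^2\le \psi(r)+|Q_r|\,|\mean{\D u}_{z_0,r}|^2$ and control the mean-value term by a Cauchy–Schwarz/telescoping argument against the $W^{1,1}_2$ norm; after absorbing $C(n)r^\al$ into the iteration-lemma $\e$ as in Part (1) (now with $\g=n+4$, $\be=n+\al$, valid since $\al<4$), Lemma~\ref{lem:HL} yields $\psi(\rho)\le C M^2\rho^{n+\al}$. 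Hence $\int_{Q_\rho(z_0)}|\pa_iu-\mean{\pa_iu}_{z_0,\rho}|^2\le CM^2\rho^{n+2+2(\al/2)}$, and Lemma~\ref{lem:par-camp-space-embed} (applied to each $\pa_iu$, with exponent $\si=\al/2$) gives $\pa_iu\in H^{\al/2,\al/4}(Q_1)$, i.e.\ $\D u\in H^{\al/2,\al/4}(Q_1)$. The uniformity of the constant $M$ across $z_0$ and the treatment near $\pa_pQ_1$ (handled by restricting to $K\Subset Q_1$) complete the proof.
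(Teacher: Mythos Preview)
Your overall strategy matches the paper's, but there is a genuine exponent error in Part~(1) that propagates into Part~(2) and prevents the argument from closing.

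In Part~(1), you apply Lemma~\ref{lem:HL} to \eqref{eq:alm-cal-Mor-est} with $\gamma=2n+6$ and $\beta=n+2+2\sigma$. This yields $\vp_{z_0}(\rho,u)\le CM^2\rho^{n+2+2\sigma}$, and since the double integral in $\vp$ satisfies
\[
\int_{Q_\rho\times Q_\rho}|u(z)-u(w)|^2\,dz\,dw=2|Q_\rho|\int_{Q_\rho}|u-\mean u_{z_0,\rho}|^2\approx\rho^{n+2}\int_{Q_\rho}|u-\mean u_{z_0,\rho}|^2,
\]
dividing by $\rho^{n+2}$ gives only $\int_{Q_\rho}|u-\mean u_{z_0,\rho}|^2\le CM^2\rho^{2\sigma}$, \emph{not} $\rho^{n+2+2\sigma}$ as you write. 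This is insufficient for Lemma~\ref{lem:par-camp-space-embed}. The fix is to take $\beta=2n+4+2\sigma$ (still $<2n+6$ since $\sigma<1$); then $\vp_{z_0}(\rho,u)\le CM^2\rho^{2n+4+2\sigma}$ and the Campanato condition follows correctly. This is exactly what the paper does.

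The error cascades into Part~(2). With the corrected $\beta$, the gradient term in $\vp$ gives $\rho^{n+4}\int_{Q_\rho}|\D u|^2\le\vp_{z_0}(\rho,u)\le CM^2\rho^{2n+4+2\sigma}$, hence $\int_{Q_\rho}|\D u|^2\le CM^2\rho^{n+2\sigma}$ for any $\sigma<1$ (not the $\rho^{2\sigma-2}$ you compute). Plugging this into \eqref{eq:alm-cal-Camp-est} makes the inhomogeneous term $Cr^\al\int_{Q_r}|\D u|^2\le CM^2r^{n+2\sigma+\al}$; since $\sigma<1$, one pass through Lemma~\ref{lem:HL} yields only $\D u\in H^{\tau,\tau/2}$ for $\tau<\al/2$, not $\tau=\al/2$. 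Your proposed workaround---splitting $\int_{Q_r}|\D u|^2=\psi(r)+|Q_r|\,|\mean{\D u}_{z_0,r}|^2$ and ``controlling the mean-value term by telescoping''---is circular: the telescoping bound on $|\mean{\D u}_{z_0,r}|$ requires a Campanato decay of $\psi$, which is precisely what you are trying to prove. (And note your stated output $\psi(\rho)\le CM^2\rho^{n+\al}$ is again two powers short of the $\rho^{n+2+\al}$ needed for $H^{\al/2,\al/4}$.) The paper closes this gap by a two-step bootstrap: first take $\sigma=1-\al/4$ to obtain $\D u\in H^{\al/4,\al/8}$; now $\D u$ is locally bounded, so $\int_{Q_r}|\D u|^2\le Cr^{n+2}$, and a second application of \eqref{eq:alm-cal-Camp-est} with this improved Morrey bound delivers the inhomogeneous term $CM^2r^{n+2+\al}$ and hence $\D u\in H^{\al/2,\al/4}$.
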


\begin{proof}
Let $K\Subset Q_1$, and fix $z_0\in K$ and $0<\si<1$. Let $r_0:=\frac12d_{\rm par}(K,\pa Q_1)>0$.
If $0<\rho<r<r_0$, then applying Lemma~\ref{lem:HL} to \eqref{eq:alm-cal-Mor-est} gives that for $0<\rho<r<r_0$, $$
\vp_{z_0}(\rho,u)\le C(n,\al,\si)\left(\frac\rho r\right)^{2n+4+2\si}\vp_{z_0}(r,u).
$$
Taking $r\nearrow r_0$ yields 
\begin{multline}\label{eq:alm-cal-phi-bound}
    \int_{Q_\rho(z_0)\times Q_\rho(z_0)}|u(z)-u(w)|^2\,dzdw+\rho^{n+4}\int_{Q_\rho(z_0)}|\D u|^2\\
    = \vp_{z_0}(\rho,u)\le C(n,\al,\si)\left(\frac\rho{r_0}\right)^{2n+4+2\si}\vp_{z_0}(r_0,u)\\
    \le C(n,K,\al,\si)\|u\|^2_{W^{1,0}_2(Q_1)}\rho^{2n+4+2\si}.
\end{multline}
Combining this with the observation that for each $w\in Q_\rho(z_0)$
\begin{align*}
    |\mean{u}_{z_0,\rho}-u(w)|^2&= \left|\frac1{|Q_\rho|}\int_{Q_\rho(z_0)}u(z)\,dz-\frac1{|Q_\rho|}\int_{Q_\rho(z_0)}u(w)\,dz\right|^2\\
    &\le \frac1{|Q_\rho|^2}\left(\int_{Q_\rho(z_0)}|u(z)-u(w)|\,dz\right)^2\\
    &\le \frac{C(n)}{\rho^{n+2}}\int_{Q_\rho(z_0)}|u(z)-u(w)|^2\,dz,
\end{align*}
we have \begin{align*}
    \int_{Q_\rho(z_0)}|\mean{u}_{z_0,\rho}-u(w)|^2\,dw&\le  \frac{C(n)}{\rho^{n+2}}\int_{Q_\rho(z_0)\times Q_\rho(z_0)}|u(z)-u(w)|^2\,dzdw\\
    &\le C(n,K,\al,\si)\|u\|^2_{W^{1,0}_2(Q_1)}\rho^{n+2+2\si}.
\end{align*}
By Lemma~\ref{lem:par-camp-space-embed}, this implies $u\in H^{\si,\si/2}$.\\
\indent For the regularity of $\D u$, let $\widetilde{K}:=\{z\in Q_1: d_{\rm par}(z,\pa_pQ_1)\ge 1/2d_{\rm par}(K,\pa_pQ_1)\}$. Note that \eqref{eq:alm-cal-phi-bound} gives \begin{align*}
    \int_{Q_\rho(z_0)}|\D u|^2\le C(n,K,\al,\si)\|u\|^2_{W^{1,0}_2(Q_1)}\rho^{n+2\si},\quad 0<\rho<r_0.
\end{align*}
We then combine this estimate with $\si=1-\al/4$ and \eqref{eq:alm-cal-Camp-est} to obtain 
\begin{equation*}
    \int_{Q_\rho(z_0)}|\D u-\mean{\D u}_{z_0,\rho}|^2\le C\left(\frac\rho r\right)^{n+4}\int_{Q_r(z_0)}|\D u-\mean{\D u}_{z_0,r}|^2
    +C\|u\|^2_{W^{1,0}_2(Q_1)}r^{n+2+\al/2}
\end{equation*}
for $z_0\in\widetilde{K}$ and $0<\rho<r<r_0$. Taking $r\nearrow r_0$ and applying Lemma~\ref{lem:par-camp-space-embed}, we see that $\D u\in H^{\al/4,\al/8}$ with $\|\D u\|_{H^{\al/4,\al/8}(\widetilde{K})}\le C(n,K,\al)\|u\|_{W^{1,0}_2(Q_1)}$. Note that the definition $r_0=\frac12d_{\rm par}(K,\pa Q_1)$ implies $Q_{r_0}(z_0)\subset \widetilde{K}$ for all $z_0\in K$. Now we combine \eqref{eq:alm-cal-Camp-est} with the improved estimate $\int_{Q_r(z_0)}|\D u|^2\le C\|u\|_{W^{1,0}_2(Q_1)}^2r^{n+2}$ to obtain 
\begin{equation}
    \label{eq:alm-cal-Camp-est-improved}
     \int_{Q_\rho(z_0)}|\D u-\mean{\D u}_{z_0,\rho}|^2\le C\left(\frac\rho r\right)^{n+4}\int_{Q_r(z_0)}|\D u-\mean{\D u}_{z_0,r}|^2
     +C\|u\|_{W^{1,0}_2(Q_1)}^2r^{n+2+\al},
\end{equation}
for $z_0\in K$ and $0<\rho<r<r_0$.
This implies $\D u\in H^{\al/2,\al/4}$.
\end{proof}

%%%%%%%%%%%%%%%%%%%%%%%%%%%%%%%%%%%%%%%%%%%%

\section{H\"older regularity of almost minimizers}\label{sec:alm-min-holder}

This section is devoted to proving $H^{\si,\si/2}$-regularity of almost minimizers, $0<\si<1$. As in Section~\ref{sec:alm-cal} concerning almost caloric functions, we first prove growth estimates for solutions of the parabolic Signorini problem (Proposition~\ref{prop:par-sig-phi-est}), and then obtain the similar result for almost minimizers (Proposition~\ref{prop:par-alm-min-Mor-est-thin}).

Before starting proving Proposition~\ref{prop:par-sig-phi-est}, we observe that the combination of Theorem~9.1 in \cite{DanGarPetTo17} and Theorem 1 in \cite{BanDanGarPet20} implies that if $v$ is a solution of the parabolic Signorini problem in $Q_1$, then $v\in H^{3/2,3/4}(K^\pm\cup K')$ for any $K\Subset Q_1$ and $$
\|v\|_{H^{3/2,3/4}(K^\pm\cup K')}\le C(n,K)\|v\|_{L^2(Q_1)}.
$$
We also define, for any $Q_r(z_0)\subset Q_1$ with $z_0\in Q'_1$, $$
a_{v,z_0,r}=\begin{cases}
  0,&\text{if } \{v=0\}\cap Q'_r(z_0)=\Lambda(v)\cap Q'_r(z_0)\neq\emptyset,\\
  \mean{v}_{z_0,r},& \text{if } \Lambda(v)\cap Q'_r(z_0)=\emptyset.
\end{cases}
$$

\begin{proposition}
Let $v$ be a solution of the parabolic Signorini problem in $Q_r(z_0)\Subset Q_1$ with $z_0\in Q'_1$. Then, for any $0<\rho<r$, $$
\int_{Q_\rho(z_0)}|v-a_{v,z_0,\rho}|^2\le C(n)\left(\frac\rho r\right)^{n+4}\int_{Q_r(z_0)}|v-a_{v,z_0,r}|^2.
$$
\end{proposition}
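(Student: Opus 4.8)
The plan is to follow the same dichotomy that defines the coefficient $a_{v,z_0,r}$, treating separately the case where the free boundary is nearby and the case where it is absent. First, observe that the statement for $r/2 \le \rho < r$ is essentially free from monotonicity/doubling-type considerations once one notices that $\int_{Q_\rho(z_0)}|v-a_{v,z_0,\rho}|^2 \le \int_{Q_\rho(z_0)}|v-c|^2$ for the optimal constant, so up to a harmless factor $2^{n+4}$ we may assume $\rho < r/2$.

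For the main range $0<\rho<r/2$, I would split into two cases. \emph{Case 1: $\Lambda(v)\cap Q'_r(z_0)=\emptyset$.} Then there is no free boundary in $Q_r(z_0)$, so $v$ is a solution of the pure Neumann problem across $Q'_r(z_0)$; by even reflection (since $\psi\equiv 0$), $v$ extends to a caloric function in $Q_r(z_0)$, and $a_{v,z_0,\rho}=\mean{v}_{z_0,\rho}$, $a_{v,z_0,r}=\mean{v}_{z_0,r}$. The desired estimate is then exactly the second inequality of Proposition~\ref{prop:cal-Mor-Camp-est}. \emph{Case 2: $\Lambda(v)\cap Q'_r(z_0)\neq\emptyset$.} Here $a_{v,z_0,\rho}=a_{v,z_0,r}=0$, so we must show $\int_{Q_\rho(z_0)}|v|^2 \le C(\rho/r)^{n+4}\int_{Q_r(z_0)}|v|^2$. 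This is where the optimal regularity of Signorini solutions enters: pick a point $z_*\in\Lambda(v)\cap Q'_r(z_0)$; at a free boundary point the solution $v$ vanishes together with $\D v$ (the $H^{3/2,3/4}$ regularity quoted above, combined with the fact that at a free boundary point the frequency is $\ge 3/2$, gives $|v(z)| \le C\|v\|_{L^2(Q_r(z_0))} (\|z-z_*\|/r)^{3/2} $ for $z$ in, say, $Q_{r/2}(z_*)$). Rescaling and integrating this pointwise bound over $Q_\rho(z_0)\subset Q_{Cr}(z_*)$ yields $\int_{Q_\rho(z_0)}|v|^2 \le C (\rho/r)^{n+2+3}\|v\|_{L^2(Q_r(z_0))}^2$, and since $n+5 \ge n+4$ this is stronger than what is needed (one keeps only the $(\rho/r)^{n+4}$ decay). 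One should also double-check the case $z_0$ itself sits far from $\Lambda(v)$ relative to $\rho$ versus close; but since any free boundary point in $Q'_r(z_0)$ suffices and $\|z-z_*\|\le C r$ on $Q_\rho(z_0)$, the argument is uniform.

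The main obstacle I anticipate is handling the two definitions of $a_{v,z_0,\cdot}$ \emph{at different scales simultaneously}: it is conceivable that $\Lambda(v)\cap Q'_r(z_0)\neq\emptyset$ while $\Lambda(v)\cap Q'_\rho(z_0)=\emptyset$, so $a_{v,z_0,r}=0$ but $a_{v,z_0,\rho}=\mean{v}_{z_0,\rho}$. In that mixed situation one still has, by optimality of the mean, $\int_{Q_\rho(z_0)}|v-\mean{v}_{z_0,\rho}|^2 \le \int_{Q_\rho(z_0)}|v|^2$, so the Case~2 pointwise growth bound still closes the estimate; the reverse mixed case ($\Lambda$ meets $Q'_\rho$ but not $Q'_r$) cannot occur since $Q'_\rho(z_0)\subset Q'_r(z_0)$. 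So the genuinely new ingredient is just the $3/2$-growth away from the free boundary, and the proof amounts to: reduce to $\rho<r/2$; if no free boundary, invoke reflection plus Proposition~\ref{prop:cal-Mor-Camp-est}; if there is a free boundary point, use the quoted $H^{3/2,3/4}$ bound to get pointwise decay of $v$ and integrate. I would write it in roughly that order, with the bulk of the (short) work being the verification that the quoted global regularity indeed gives the pointwise bound $|v(z)|\le C\|v\|_{L^2(Q_r(z_0))}(\dist_{\rm par}(z,\Lambda(v))/r)^{3/2}$ near a free boundary point, perhaps by citing the relevant growth estimate from \cite{DanGarPetTo17} directly rather than re-deriving it.
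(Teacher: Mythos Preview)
Your overall plan---split on whether $\Lambda(v)$ meets $Q'_r(z_0)$, invoke the caloric estimate when it does not, use the regularity of Signorini solutions when it does---matches the paper's. The genuine gap is in your handling of the mixed subcase $\Lambda(v)\cap Q'_r(z_0)\neq\emptyset$, $\Lambda(v)\cap Q'_\rho(z_0)=\emptyset$. You correctly reduce there to bounding $\int_{Q_\rho(z_0)}|v|^2$ and then appeal to the pointwise growth $|v(z)|\le C\|v\|_{L^2(Q_r(z_0))}(\|z-z_*\|/r)^{3/2}$ at some $z_*\in\Lambda(v)\cap Q'_r(z_0)$. But in this subcase the only available coincidence points lie in $Q'_r(z_0)\setminus Q'_\rho(z_0)$, so for $z\in Q_\rho(z_0)$ you only know $\|z-z_*\|\le Cr$, not $\le C\rho$; plugging that in and integrating over $Q_\rho(z_0)$ yields at best $\int_{Q_\rho(z_0)}|v|^2\le C(\rho/r)^{n+2}\|v\|_{L^2(Q_r(z_0))}^2$, two powers short of $(\rho/r)^{n+4}$. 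Your sentence ``since any free boundary point in $Q'_r(z_0)$ suffices and $\|z-z_*\|\le Cr$ \dots\ the argument is uniform'' is precisely where the exponent is lost. (There is also a secondary issue: the only $z_*$ may sit near $\partial_pQ_r(z_0)$, outside the region $Q^\pm_{r/2}(z_0)$ where the interior $H^{3/2,3/4}$ estimate is available.)

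The paper sidesteps all of this by never anchoring at a coincidence point. In Case~2 it simply notes that $a_{v,z_0,\rho}=v(z_1)$ for some $z_1\in Q_\rho(z_0)$ (either a point of $\Lambda(v)\cap Q'_\rho(z_0)$ or a point where $v$ attains its mean), writes $|v-a_{v,z_0,\rho}|\le|v-v(z_0)|+|v(z_0)-v(z_1)|$, and bounds both terms on $Q_\rho(z_0)$ by $\|\D v\|_{L^\infty(Q_{r/2}(z_0))}\rho+\|\partial_t v\|_{L^\infty(Q_{r/2}(z_0))}\rho^2$. The scaled interior estimate $\|\D v\|_{L^\infty(Q_{r/2}(z_0))}\le Cr^{-(n+4)/2}\|v\|_{L^2(Q_r(z_0))}$ (and the analogous one for $\partial_t v$) then gives $(\rho/r)^{n+4}$ directly, uniformly over both the pure and mixed subcases, and uses only that $z_0$ is the center of the cylinder---so the interior estimate always applies. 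A minor side remark: your reduction for $\rho\ge r/2$ via ``optimality of the constant'' is not literally correct, since $a_{v,z_0,\rho}=0$ need not be the minimizing constant; the paper instead checks the three admissible pairs $(a_{v,z_0,\rho},a_{v,z_0,r})$ by hand.
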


\begin{proof}
Notice that if $a_{v,z_0,r}=\mean{v}_{z_0,r}$ then $a_{v,z_0,\rho}=\mean{v}_{z_0,\rho}$ as well. Thus we have three possibilities $(a_{v,z_0,\rho},a_{v,z_0,r})=(0,0)\, , (\mean{v}_{z_0,\rho},0)\, ,(\mean{v}_{z_0,\rho},\mean{v}_{z_0,r})$, and we have in all of these cases $$
\int_{Q_\rho(z_0)}|v-a_{v,z_0,\rho}|^2\le \int_{Q_r(z_0)}|v-a_{v,z_0,r}|^2.
$$
Thus, if $\rho\ge r/2$, then we simply have $$
\int_{Q_\rho(z_0)}|v-a_{v,z_0,\rho}|^2\le 2^{n+4}\left(\frac\rho r\right)^{n+4} \int_{Q_r(z_0)}|v-a_{v,z_0,r}|^2.
$$
Therefore, we may assume that $\rho<r/2$.

\medskip\noindent\emph{Case 1.} If  $a_{v,z_0,r}=\mean{v}_{z_0,r}$, then $a_{v,z_0,\rho}=\mean{v}_{z_0,\rho}$ and $v$ is caloric in $Q_r(z_0)$. Thus, by Proposition~\ref{prop:cal-Mor-Camp-est} $$
\int_{Q_\rho(z_0)}|v-\mean{v}_{z_0,\rho}|^2\le C(n)\left(\frac\rho r\right)^{n+4}\int_{Q_r(z_0)}|v-\mean{v}_{z_0,r}|^2.
$$
\medskip\noindent\emph{Case 2.} Suppose $a_{v,z_0,r}=0$. Using that $a_{v,z_0,\rho}=v(z_1)$ for some $z_1\in Q_\rho(z_0)$ and applying $H^{3/2,3/4}$-estimate of $v$ in $Q^\pm_{r/2}(z_0)$, we obtain
\begin{align*}
    \int_{Q_\rho(z_0)}|v-a_{v,z_0,\rho}|^2 &\le \int_{Q_\rho(z_0)}2\left(|v-v(z_0)|^2+|v(z_0)-v(z_1)|^2\right)\\
    &\le C(n)\rho^{n+2}\left(\|\D v\|^2_{L^\infty(Q_{r/2}(z_0))}\rho^2+\|\pa_tv\|^2_{L^\infty(Q_{r/2}(z_0))}\rho^4\right)\\
    &\le C(n)\rho^{n+2}\left(\frac{\rho^2}{r^{n+4}}\|v\|^2_{L^2(Q_r(z_0))}+\frac{\rho^4}{r^{n+6}}\|v\|^2_{L^2(Q_r(z_0))}\right)\\
    &\le C(n)\left(\frac\rho r\right)^{n+4}\int_{Q_r(z_0)}v^2= C(n)\left(\frac\rho r\right)^{n+4}\int_{Q_r(z_0)}|v-a_{v,z_0,r}|^2.
\end{align*}
This completes the proof.
\end{proof}

\begin{proposition}
Let $v$ be a solution of the parabolic Signorini problem in $Q_r(z_0)\Subset Q_1$ with $z_0\in Q'_1$. Then, for any $0<\rho<r$, $$
\int_{Q_\rho(z_0)}|\D v|^2\le C(n)\left(\frac\rho r\right)^{n+2}\int_{Q_r(z_0)}|\D v|^2+C(n)\frac{\rho^{n+2}
}{r^{n+4}}\int_{Q_r(z_0)}|v-a_{v,z_0,r}|^2.
$$
\end{proposition}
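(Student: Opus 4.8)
The plan is to split into the same two cases as in the previous proposition, according to the value of $a_{v,z_0,r}$, and to handle each by combining interior caloric estimates with the $H^{3/2,3/4}$-regularity of solutions of the parabolic Signorini problem. As before, the inequality is trivial for $r/2\le\rho<r$ since $\int_{Q_\rho(z_0)}|\D v|^2\le \int_{Q_r(z_0)}|\D v|^2$ (monotonicity in the radius) and $(\rho/r)^{n+2}\ge 2^{-(n+2)}$, so we may assume $\rho<r/2$.

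\emph{Case 1: $a_{v,z_0,r}=\mean{v}_{z_0,r}$.} Then $\Lambda(v)\cap Q'_r(z_0)=\emptyset$, so $v$ is caloric in $Q_r(z_0)$, and the first inequality of Proposition~\ref{prop:cal-Mor-Camp-est} gives directly $\int_{Q_\rho(z_0)}|\D v|^2\le C(n)(\rho/r)^{n+2}\int_{Q_r(z_0)}|\D v|^2$, which is stronger than what is claimed (the second term on the right-hand side is nonnegative, so it can simply be added).

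\emph{Case 2: $a_{v,z_0,r}=0$.} Here I would use the $H^{3/2,3/4}$-estimate on $Q^\pm_{r/2}(z_0)$, which controls $\|\D v\|_{L^\infty(Q_{r/2}(z_0))}$ and $[\D v]$-type seminorms in terms of $r^{-(n+4)/2}\|v\|_{L^2(Q_r(z_0))}=r^{-(n+4)/2}\|v-a_{v,z_0,r}\|_{L^2(Q_r(z_0))}$ (using $a_{v,z_0,r}=0$). Since $\D v$ is parabolically H\"older continuous on $Q_{r/2}(z_0)$ with exponent $1/2$ (from $\D v\in H^{1/2,1/4}$), for $z\in Q_\rho(z_0)$ one has $|\D v(z)-\mean{\D v}_{z_0,\rho}|\le C\,[\D v]^{(1/2)}_{x}\rho^{1/2}+C\,[\D v]^{(1/4)}_{t}\rho^{1/2}$, hence
\begin{equation*}
\int_{Q_\rho(z_0)}|\D v|^2\le 2\,|Q_\rho|\,|\mean{\D v}_{z_0,\rho}|^2+C\rho^{n+2}\cdot\rho\cdot\big([\D v]^{(3/2)}_{Q_{r/2}(z_0)}\big)^2.
\end{equation*}
For the mean value term, I would bound $|\mean{\D v}_{z_0,\rho}|\le \dashint_{Q_\rho(z_0)}|\D v|\le \|\D v\|_{L^\infty(Q_{r/2}(z_0))}$, so $|Q_\rho||\mean{\D v}_{z_0,\rho}|^2\le C\rho^{n+2}\|\D v\|^2_{L^\infty(Q_{r/2}(z_0))}$. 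Feeding in the scaled $H^{3/2,3/4}$-estimates $\|\D v\|^2_{L^\infty(Q_{r/2}(z_0))}\le C r^{-(n+4)}\|v\|^2_{L^2(Q_r(z_0))}$ and $\big([\D v]^{(3/2)}\big)^2\le C r^{-(n+6)}\|v\|^2_{L^2(Q_r(z_0))}$, and using $\rho<r/2$ so that the $\rho^{n+5}r^{-(n+6)}$ contribution is dominated by the $\rho^{n+2}r^{-(n+4)}$ one, I get $\int_{Q_\rho(z_0)}|\D v|^2\le C(n)\rho^{n+2}r^{-(n+4)}\|v-a_{v,z_0,r}\|^2_{L^2(Q_r(z_0))}$; this is $\le C(n)(\rho/r)^{n+2}\int_{Q_r(z_0)}|\D v|^2+C(n)\rho^{n+2}r^{-(n+4)}\int_{Q_r(z_0)}|v-a_{v,z_0,r}|^2$ by keeping only the second term. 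Combining the two cases yields the claim.

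The main point requiring care is the correct parabolic scaling of the $H^{3/2,3/4}$-norm of $v$ on $Q^\pm_{r/2}(z_0)$: one must track that $\|\D v\|_{L^\infty(Q_{r/2}(z_0))}$ scales like $r^{-1}$ times the $L^\infty$-norm of $v$ and, via interior $L^\infty$--$L^2$ estimates for caloric functions on each side together with the flat thin space, like $r^{-(n+4)/2}\|v\|_{L^2(Q_r(z_0))}$; the analogous computation for the first- and second-order seminorms was already done in the proof of the preceding proposition, so I would simply reuse those scaled bounds. No new monotonicity or compactness input is needed here — the proposition is a direct consequence of Proposition~\ref{prop:cal-Mor-Camp-est}, the optimal regularity of Signorini solutions, and elementary estimates.
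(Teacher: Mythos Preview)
Your approach is essentially the paper's: the same reduction to $\rho<r/2$, the same case split on $a_{v,z_0,r}$, and the same reliance on the scaled $H^{3/2,3/4}$ estimate when $a_{v,z_0,r}=0$. The paper's Case~2 is cleaner, however: it simply bounds
\[
\int_{Q_\rho(z_0)}|\D v|^2\le C\rho^{n+2}\|\D v\|^2_{L^\infty(Q_{r/2}(z_0))}\le C\,\frac{\rho^{n+2}}{r^{n+4}}\int_{Q_r(z_0)}v^2,
\]
with no mean/oscillation decomposition at all; your split is harmless but superfluous, since both pieces are already dominated by $\|\D v\|_{L^\infty}$. (In the caloric case the paper then just replaces $v$ by $v-\mean{v}_{z_0,r}$ in this same inequality, rather than invoking Proposition~\ref{prop:cal-Mor-Camp-est}.)

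One slip to flag: what you denote $[\D v]^{(3/2)}$ should be the $H^{1/2,1/4}$ seminorm of $\D v$ (equivalently $[v]^{(3/2)}$ in the paper's notation), and its square scales like $r^{-(n+5)}\|v\|_{L^2(Q_r)}^2$, not $r^{-(n+6)}$. With the correct exponent your domination step reads $\rho^{n+3}r^{-(n+5)}=(\rho/r)\,\rho^{n+2}r^{-(n+4)}\le \rho^{n+2}r^{-(n+4)}$ and is fine; with your stated exponent $r^{-(n+6)}$ the comparison would fail as $r\to0$.
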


\begin{proof}
If $\rho\ge r/2$, then we simply have $$
\int_{Q_\rho(z_0)}|\D v|^2\le 2^{n+2}\left(\frac\rho r\right)^{n+2}\int_{Q_r(z_0)}|\D v|^2.
$$
On the other hand, when $\rho<r/2$, we have $$
\int_{Q_\rho(z_0)}|\D v|^2\le C(n)\rho^{n+2}\|\D v\|^2_{L^\infty(Q_{r/2}(z_0))}\le C(n)\frac{\rho^{n+2}}{r^{n+4}}\int_{Q_r(z_0)}v^2.
$$
Clearly, if $a_{v,z_0,r}=0$, then we are done. Thus, we may assume $a_{v,z_0,r}=\mean{v}_{z_0,r}$, which implies that $v$ is caloric in $Q_r(z_0)$. Then we can replace $v$ by $v-a_{v,z_0,r}$ in the above inequality, and get $$
\int_{Q_\rho(z_0)}|\D v|^2\le C(n)\frac{\rho^{n+2}}{r^{n+4}}\int_{Q_r(z_0)}|v-a_{v,z_0,r}|^2.
$$
This completes the proof.
\end{proof}

The combination of the preceding two propositions gives the following inequality.

\begin{corollary}
Let $v$ be a solution of the parabolic Signorini problem in $Q_r(z_0)\Subset Q_1$ with $z_0\in Q'_1$. Then for any $0<\rho<r$, \begin{equation}\label{eq:par-sig-Mor-est-1}
\int_{Q_\rho(z_0)}(\rho^2|\D v|^2+|v-a_{v,z_0,\rho}|^2)\le C(n)\left(\frac\rho r\right)^{n+4}\int_{Q_r(z_0)}(r^2|\D v|^2+|v-a_{v,z_0,r}|^2).
\end{equation}
\end{corollary}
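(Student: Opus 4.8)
The plan is to simply add the two preceding propositions together, after matching up their scaling factors. The first proposition gives, for $0<\rho<r$,
$$
\int_{Q_\rho(z_0)}|v-a_{v,z_0,\rho}|^2\le C(n)\left(\frac\rho r\right)^{n+4}\int_{Q_r(z_0)}|v-a_{v,z_0,r}|^2,
$$
which is already in the exact form needed for the $|v-a_{v,z_0,r}|^2$ part of the right-hand side of \eqref{eq:par-sig-Mor-est-1}; in particular it is bounded by $C(n)(\rho/r)^{n+4}\int_{Q_r(z_0)}(r^2|\D v|^2+|v-a_{v,z_0,r}|^2)$. The second proposition gives
$$
\int_{Q_\rho(z_0)}|\D v|^2\le C(n)\left(\frac\rho r\right)^{n+2}\int_{Q_r(z_0)}|\D v|^2+C(n)\frac{\rho^{n+2}}{r^{n+4}}\int_{Q_r(z_0)}|v-a_{v,z_0,r}|^2.
$$

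The only bookkeeping step is to multiply this second estimate through by $\rho^2$ and check that both resulting terms are dominated by $C(n)(\rho/r)^{n+4}\int_{Q_r(z_0)}(r^2|\D v|^2+|v-a_{v,z_0,r}|^2)$. For the first term, $\rho^2\cdot(\rho/r)^{n+2}=(\rho/r)^{n+4}\,r^2$, so $\rho^2 C(n)(\rho/r)^{n+2}\int_{Q_r(z_0)}|\D v|^2=C(n)(\rho/r)^{n+4}\int_{Q_r(z_0)}r^2|\D v|^2$, exactly the Dirichlet piece on the right of \eqref{eq:par-sig-Mor-est-1}. For the second term, $\rho^2\cdot\rho^{n+2}/r^{n+4}=(\rho/r)^{n+4}$, since $\rho<r$ we may write this as $(\rho/r)^{n+4}\le (\rho/r)^{n+4}$ trivially, so $C(n)\rho^{n+4}/r^{n+4}\int_{Q_r(z_0)}|v-a_{v,z_0,r}|^2=C(n)(\rho/r)^{n+4}\int_{Q_r(z_0)}|v-a_{v,z_0,r}|^2$, which is again controlled by the right-hand side of \eqref{eq:par-sig-Mor-est-1}. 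Adding the $|v-a_{v,z_0,\rho}|^2$ bound from the first proposition and the $\rho^2|\D v|^2$ bound just derived, and relabeling the constant, yields \eqref{eq:par-sig-Mor-est-1}.

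There is no real obstacle here: the substantive work has already been done in the two propositions (the caloricity dichotomy via $a_{v,z_0,r}$, the interior $H^{3/2,3/4}$ estimate, and the monotonicity argument for $\rho\ge r/2$). The corollary is a purely algebraic consolidation, the point being that the single functional $\rho^2\int_{Q_\rho}|\D v|^2+\int_{Q_\rho}|v-a_{v,z_0,\rho}|^2$ enjoys a clean $(\rho/r)^{n+4}$ decay, which is the form needed later to feed into the $\vp_{z_0}$-functional estimates for almost minimizers (in parallel with how Proposition~\ref{prop:cal-Mor-Camp-est} fed into Proposition~\ref{prop:cal-phi-est} for caloric functions).
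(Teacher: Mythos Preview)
Your proof is correct and is exactly the approach the paper takes: the paper states the corollary with no explicit proof, simply noting that ``the combination of the preceding two propositions gives the following inequality.'' Your multiplication of the gradient estimate by $\rho^2$ and term-by-term bookkeeping is precisely the intended consolidation.
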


\begin{proposition}\label{prop:par-sig-phi-est}
Let $v$ be a solution of the parabolic Signorini problem in $Q_R(z_0)\Subset Q_1$ with $z_0\in Q'_1$. Then for $0<\rho<R$, \begin{align}\label{eq:par-sig-Mor-est-2}
\vp_{z_0}(\rho,v)\le C(n)\left(\frac\rho R\right)^{2n+6}\vp_{z_0}(R,v).
\end{align}
\end{proposition}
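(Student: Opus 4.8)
The plan is to mimic the proof of Proposition~\ref{prop:cal-phi-est}, now using the obstacle-adapted Morrey estimate \eqref{eq:par-sig-Mor-est-1} in place of Proposition~\ref{prop:cal-Mor-Camp-est}. As before, the monotonicity of $s\mapsto\vp_{z_0}(s,v)$ handles the range $R/2\le\rho<R$ trivially, so I would assume $\rho<R/2$. The key point is that although $v$ need not be caloric, the quantity appearing in \eqref{eq:par-sig-Mor-est-1},
$$
\int_{Q_r(z_0)}\bigl(r^2|\D v|^2+|v-a_{v,z_0,r}|^2\bigr),
$$
plays exactly the role that $\int_{Q_r(z_0)}(r^2|\D v|^2+|v-v(z_2)|^2)$ played in the caloric case: it decays with exponent $n+4$ under scaling down to $Q_\rho$.

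First I would fix $\rho<R/2$ and apply \eqref{eq:par-sig-Mor-est-1}. Then I would relate the $v$-difference integral to the double-integral piece of $\vp$. On the smaller scale, since $a_{v,z_0,\rho}$ is of the form $v(z_1)$ for some $z_1\in Q_\rho(z_0)$ (when it is $\mean{v}_{z_0,\rho}$ this holds by the mean value property; when it is $0$ it equals $v(z_1)$ for any $z_1\in\{v=0\}\cap Q'_\rho(z_0)$, which is nonempty in that case), I can write, for each $z\in Q_\rho(z_0)$,
$$
\frac1{|Q_\rho|}\int_{Q_\rho(z_0)}|v(z)-v(w)|^2\,dw\le 2|v(z)-a_{v,z_0,\rho}|^2+\frac2{|Q_\rho|}\int_{Q_\rho(z_0)}|v(w)-a_{v,z_0,\rho}|^2\,dw,
$$
so that after integrating in $z$,
$$
\frac1{|Q_\rho|}\int_{Q_\rho(z_0)\times Q_\rho(z_0)}|v(z)-v(w)|^2\,dz\,dw\le 4\int_{Q_\rho(z_0)}|v-a_{v,z_0,\rho}|^2.
$$
Conversely, on the larger scale, for any $z_2\in Q_R(z_0)$ I have $\int_{Q_R(z_0)}|v-a_{v,z_0,R}|^2\le 2\int_{Q_R(z_0)}|v-v(z_2)|^2+2|Q_R|\,|v(z_2)-a_{v,z_0,R}|^2$, and averaging in $z_2$ over $Q_R(z_0)$ gives $\int_{Q_R(z_0)}|v-a_{v,z_0,R}|^2\le 4\cdot\frac1{|Q_R|}\int_{Q_R(z_0)\times Q_R(z_0)}|v(z)-v(w)|^2\,dz\,dw$ --- wait, more carefully, $\int_{Q_R}|v-a_{v,z_0,R}|^2 \le 2\dashint_{Q_R}\!\int_{Q_R}|v(z)-v(z_2)|^2 + 2\int_{Q_R}|v(z_2)-a_{v,z_0,R}|^2\,dz_2$, and the last term is again controlled by the first after one more averaging; in any event $\int_{Q_R(z_0)}|v-a_{v,z_0,R}|^2\le \frac{C(n)}{|Q_R|}\int_{Q_R(z_0)\times Q_R(z_0)}|v(z)-v(w)|^2\,dz\,dw$.

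Chaining these with \eqref{eq:par-sig-Mor-est-1} (and using $|Q_\rho|\sim\rho^{n+2}$, $|Q_R|\sim R^{n+2}$) yields
$$
\int_{Q_\rho(z_0)}\rho^2|\D v|^2+\frac1{\rho^{n+2}}\int_{Q_\rho(z_0)^2}|v(z)-v(w)|^2\le C(n)\Bigl(\frac\rho R\Bigr)^{n+4}\Bigl(\int_{Q_R(z_0)}R^2|\D v|^2+\frac1{R^{n+2}}\int_{Q_R(z_0)^2}|v(z)-v(w)|^2\Bigr),
$$
and multiplying through by $\rho^{n+2}$ gives $\vp_{z_0}(\rho,v)\le C(n)(\rho/R)^{2n+6}\vp_{z_0}(R,v)$, which is \eqref{eq:par-sig-Mor-est-2}. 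The only genuine subtlety --- the main thing to get right --- is the bookkeeping around $a_{v,z_0,r}$: one must check that in every one of the three admissible configurations $(a_{v,z_0,\rho},a_{v,z_0,R})$ the anchor on the small scale is indeed a point value $v(z_1)$ with $z_1\in Q_\rho(z_0)$ (so the double-integral $\le$ constant-anchored-integral step is legitimate), and that the reverse passage on the large scale loses only a dimensional constant. Everything else is the same averaging trick used in Proposition~\ref{prop:cal-phi-est}.
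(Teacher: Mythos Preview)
Your argument has a genuine gap at the large-scale step. You claim that
\[
\int_{Q_R(z_0)}|v-a_{v,z_0,R}|^2\le \frac{C(n)}{|Q_R|}\int_{Q_R(z_0)\times Q_R(z_0)}|v(z)-v(w)|^2\,dz\,dw,
\]
but your averaging argument for it is circular: after averaging the inequality $\int_{Q_R}|v-a|^2\le 2\int_{Q_R}|v-v(z_2)|^2+2|Q_R|\,|v(z_2)-a|^2$ over $z_2\in Q_R(z_0)$, the second term becomes $2\int_{Q_R}|v-a|^2$, which is twice the left-hand side; there is no ``one more averaging'' that removes it. Worse, the inequality itself is \emph{false} in the case $a_{v,z_0,R}=0$. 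Since the right-hand side equals $2\int_{Q_R}|v-\mean{v}_{z_0,R}|^2$, you would be asserting $\int_{Q_R}v^2\le C(n)\int_{Q_R}|v-\mean{v}_{z_0,R}|^2$ for every Signorini solution whose coincidence set meets $Q'_R(z_0)$. But that contact could be a single point arbitrarily close to $\partial_pQ_R(z_0)$, with $v$ uniformly close to a large positive constant on most of $Q_R$; then the left side is large and the right side is small. (Incidentally, your small-scale step is fine and does not even need $a_{v,z_0,\rho}$ to be a point value---the bound $\int\!\!\int|v(z)-v(w)|^2\le 4|Q_\rho|\int|v-a|^2$ holds for any constant $a$.)

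The paper gets around exactly this obstruction by inserting an intermediate scale and invoking the quantitative interior regularity of Signorini solutions. It first proves an estimate \eqref{eq:par-sig-Mor-est-3} with an \emph{arbitrary} anchor $z_2\in Q_r(z_0)$ on the right, but at the enlarged scale $kr$ with $k=k(n)>2$. In the delicate case $a_{v,z_0,r}=0$ one picks the zero point $z_r\in Q'_r(z_0)$ and uses the $H^{3/2,3/4}$ estimate on $Q_{kr/2}(z_0)\supset Q_r(z_0)$ to get $2\int_{Q_{kr}}v(z_2)^2\le C_0(n)k^{-2}\int_{Q_{kr}}v^2$; choosing $k$ large makes this absorbable and yields $\int_{Q_{kr}}v^2\le 4\int_{Q_{kr}}|v-v(z_2)|^2$ for every $z_2\in Q_r(z_0)$. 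Only after this does one average over $z_1\in Q_\rho(z_0)$ and $z_2\in Q_r(z_0)$ to produce the double integrals, and finally let $r\nearrow R/k$. The passage to a strictly smaller scale is essential: since the zero of $v$ may sit right against $\partial_pQ_R(z_0)$, no interior estimate is available at the top scale $R$ itself, which is precisely why your direct comparison at scale $R$ cannot work.
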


\begin{proof}
We first claim that there exists a constant $k=k(n)>2$ such that for any $0<\rho<\frac r2<\frac R{2k}$, $z_1\in Q_\rho(z_0)$ and $z_2\in Q_r(z_0)$, 
\begin{equation}\label{eq:par-sig-Mor-est-3}
\int_{Q_\rho(z_0)}(\rho^2|\D v|^2+|v-v(z_1)|^2)\\
\le C(n)\left(\frac\rho {kr}\right)^{n+4}\int_{Q_{kr}(z_0)}((kr)^2|\D v|^2+|v-v(z_2)|^2).
\end{equation}
Indeed, if $a_{v,z_0,r}=\mean{v}_{z_0,r}$, then $v$ is caloric in $Q_r(z_0)$, thus \eqref{eq:par-sig-Mor-est-3} readily follows from Proposition~\ref{prop:cal-Mor-Camp-est}. Hence we may assume $a_{v,z_0,r}=0$. We can take $z_\rho\in Q_\rho(z_0)$ and $z_r\in Q'_r(z_0)$ such that $v(z_\rho)=a_{v,z_0,\rho}$ and $v(z_r)=a_{v,z_0,r}=0$. Since $z_1$, $z_\rho\in Q_\rho(z_0)$, we have by $H^{3/2,3/4}$-estimate of $v$ in $Q^\pm_{r/2}(z_0)$ that for $\rho<r/2$ 
\begin{align*}
    |v(z_\rho)-v(z_1)|&\le |v(z_\rho)-v(z_0)|+|v(z_0)-v(z_1)|\\
    &\le 2\left(\|\D v\|_{L^\infty(Q_{r/2}(z_0))}\rho+\|\pa_tv\|_{L^\infty(Q_{r/2}(z_0))}\rho^2\right)\\
    &\le C(n)\frac\rho{r^{\frac n2+2}}\|v\|_{L^2(Q_r(z_0))},
\end{align*}
and thus 
\begin{align}\label{eq:par-sig-diff-est}
    \int_{Q_\rho(z_0)}|v-v(z_1)|^2&\le 2\int_{Q_\rho(z_0)}|v-v(z_\rho)|^2+2\int_{Q_\rho(z_0)}|v(z_\rho)-v(z_1)|^2\\
    &\le 2\int_{Q_\rho(z_0)}|v-a_{v,z_0,\rho}|^2+C(n)\left(\frac\rho r\right)^{n+4}\int_{Q_r(z_0)}v^2\nonumber.
\end{align}
Moreover, for $k=k(n)>2$ to be determined below, we deduce from $z_2\in Q_r(z_0)$ and $z_r\in Q'_r(z_0)$ that
\begin{align*}
    2\int_{Q_{kr}(z_0)}v(z_2)^2&= C(n)(kr)^{n+2}|v(z_2)-v(z_r)|^2\\
    &\le C(n)(kr)^{n+2}(\|\D v\|_{L^\infty(Q_{\frac k2r}(z_0))}^2r^2+\|\pa_tv\|^2_{L^\infty(Q_{\frac k2r}(z_0))}r^4)\\
    &\le C_0(n)(kr)^{n+2}\|v\|^2_{L^2(Q_{kr}(z_0))}
\frac{r^2}{(kr)^{n+4}}\\
&=\frac{C_0(n)}{k^2}\int_{Q_{kr}(z_0)}v^2.
\end{align*}
Take $k=k(n)>2$ large so that $\frac{C_0(n)}{k^2}\le 1/2$. Then \begin{align*}
    \int_{Q_{kr}(z_0)}v^2&\le 2\int_{Q_{kr}(z_0)}|v-v(z_2)|^2+2\int_{Q_{kr}(z_0)}v(z_2)^2\\
    &\le 2\int_{Q_{kr}(z_0)}|v-v(z_2)|^2+1/2\int_{Q_{kr}(z_0)}v^2,
\end{align*}
and thus 
$$
\int_{Q_{kr}(z_0)}v^2\le 4\int_{Q_{kr}(z_0)}|v-v(z_2)|^2.
$$
Combining this inequality with \eqref{eq:par-sig-Mor-est-1} and \eqref{eq:par-sig-diff-est}, we can obtain the claim \eqref{eq:par-sig-Mor-est-3}: \begin{align*}
    \hspace{2em}&\hspace{-2em}\int_{Q_\rho(z_0)}(\rho^2|\D v|^2+|v-v(z_1)|^2)\\
    &\le 2\int_{Q_\rho(z_0)}(\rho^2|\D v|^2+|v-a_{v,z_0,\rho}|^2)+C(n)\left(\frac\rho r\right)^{n+4}\int_{Q_r(z_0)}v^2\\
    &\le C(n)\left(\frac\rho r\right)^{n+4}\int_{Q_r(z_0)}(r^2|\D v|^2+v^2)+C(n)\left(\frac\rho r\right)^{n+4}\int_{Q_r(z_0)}v^2\\
    &\le C(n)\left(\frac\rho{kr}\right)^{n+4}\int_{Q_{kr}(z_0)}((kr)^2|\D v|^2+v^2)\\
    &\le C(n)\left(\frac\rho{kr}\right)^{n+4}\int_{Q_{kr}(z_0)}((kr)^2|\D v|^2+|v-v(z_2)|^2).
\end{align*}

\medskip

Now we take averages of \eqref{eq:par-sig-Mor-est-3} with respect to $z_1$ over $Q_\rho(z_0)$ and with respect to $z_2$ over $Q_r(z_0)$ to have for $\rho<r/2<\frac{R}{2k}$ \begin{align*}
    \hspace{2em}&\hspace{-2em}\int_{Q_\rho(z_0)}\rho^2|\D v|^2+\frac1{\rho^{n+2}}\int_{Q_\rho(z_0)\times Q_\rho(z_0)}|v(z)-v(w)|^2\,dzdw\\*
    &\le C(n)\left(\int_{Q_\rho(z_0)}\rho^2|\D v|^2+\frac1{|Q_\rho|}\int_{Q_\rho(z_0)\times Q_\rho(z_0)}|v(z)-v(w)|^2\,dzdw\right)\\
    &\le C(n)\left(\frac\rho{kr}\right)^{n+4}\left(\int_{Q_{kr}(z_0)}(kr)^2|\D v|^2+\frac1{|Q_r|}\int_{Q_r(z_0)\times Q_{kr}(z_0)}|v(z)-v(w)|^2\,dzdw\right)\\
    &\le C(n)\left(\frac\rho{kr}\right)^{n+4}\left(\int_{Q_{kr}(z_0)}(kr)^2|\D v|^2+\frac1{(kr)^{n+2}}\int_{Q_{kr}(z_0)\times Q_{kr}(z_0)}|v(z)-v(w)|^2\,dzdw\right).
\end{align*}
Multiplying $\rho^{n+2}$ and taking $r\nearrow R/k$, we get $$
\vp_{z_0}(\rho,v)\le C(n)\left(\frac\rho R\right)^{2n+6}\vp_{z_0}(R,v),\quad 0<\rho<\frac R{2k(n)}.
$$
This inequality also holds when $\frac{R}{2k}\le\rho<R$, for $s\mapsto \vp_{z_0}(s,v)$ is nondecreasing. This completes the proof.
\end{proof}

In the remaining of this section, we extend the above result for the solution of the parabolic Signorini problem to almost minimizers.

\begin{definition}[Almost parabolic Signorini property at a point]
We say that $u\in W^{1,1}_2(Q_1)$ satisfies the \emph{almost parabolic Signorini property} at $z_0=(x_0,t_0)$ in $Q_1$, if $u\ge 0$ on $Q_1'$ and $$
\int_{Q_r(z_0)}(1-r^\al)|\D u|^2+2\pa_tu(u-v)\le (1+r^\al)\int_{Q_r(z_0)}|\D v|^2
$$
for any $Q_r(z_0)\Subset Q_1$ and for any $v\in W^{1,0}_2(Q_r(z_0))$ with $v\ge 0$ on $Q'_r(z_0)$ and $v-u\in L^2(t_0-r^2,t_0;W^{1,2}_0(B_r(x_0)))$. 
\end{definition}

Notice that $u$ is an almost minimizer for the parabolic Signorini problem in $Q_1$ if and only if it satisfies the almost parabolic Signorini property at every $z_0\in Q_1$.

Again, in order to use the growth estimates obtained in this section when we deal with variable coefficient case in Section~\ref{sec:alm-min-var}, we state the following proposition with functions satisfying the almost parabolic Signorini property at a point.

\begin{proposition}
\label{prop:par-alm-min-Mor-est-thin}
Suppose that $u$ satisfies the almost parabolic Signorini property at $z_0$ in $Q_r(z_0)\Subset Q_1$ with $z_0\in Q'_1$. Then for any $0<\rho<r$, \begin{align}
    \label{eq:par-alm-min-Mor-est-thin}
    \vp_{z_0}(\rho,u)\le C(n,\al)\left[\left(\frac\rho r\right)^{2n+6}+r^\al\right]\vp_{z_0}(r,u).
\end{align}
\end{proposition}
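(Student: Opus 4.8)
The strategy mirrors the proof of Proposition~\ref{prop:alm-cal-Mor-Camp-est}, but with the caloric competitor replaced by the solution of the parabolic Signorini problem, so that the obstacle constraint $u\ge 0$ on $Q'_1$ is respected. First I would let $v$ be the solution of the parabolic Signorini problem in $Q_r(z_0)$ with $v=u$ on $\pa_pQ_r(z_0)$; this $v$ is an admissible competitor in the almost parabolic Signorini property since $v\ge 0$ on $Q'_r(z_0)$ and $v-u\in L^2(t_0-r^2,t_0;W^{1,2}_0(B_r(x_0)))$. Plugging $v$ into the defining inequality and using the variational characterization of $v$ (namely $\int_{Q_r(z_0)}\D v\,\D(v-u)+\pa_tv(v-u)\le 0$, which holds because $v-u$ is an admissible perturbation vanishing on the parabolic boundary and $u\ge 0$, $v\ge 0$ on the thin space), I would run exactly the same chain of manipulations as in Proposition~\ref{prop:alm-cal-Mor-Camp-est}: the cross terms combine into $-\int_{Q_r(z_0)}\pa_t(|u-v|^2)\le 0$ (using $u=v$ on the parabolic boundary so the boundary term has a sign), leaving
\[
\int_{Q_r(z_0)}|\D(u-v)|^2\le r^\al\int_{Q_r(z_0)}(|\D u|^2+|\D v|^2),
\]
and then $\int_{Q_r(z_0)}|\D v|^2\le 4\int_{Q_r(z_0)}|\D u|^2$ for $r\le r_0(\al)$, hence $\int_{Q_r(z_0)}|\D(u-v)|^2\le 5r^\al\int_{Q_r(z_0)}|\D u|^2$. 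The Poincar\'e inequality applied slicewise to $u(\cdot,t)-v(\cdot,t)\in W^{1,2}_0(B_r(x_0))$ then gives $\int_{Q_r(z_0)}|u-v|^2\le Cr^{2+\al}\int_{Q_r(z_0)}|\D u|^2$, just as in \eqref{eq:cal-alm-cal-poincare}.

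Next I would compare $\vp_{z_0}(\rho,u)$ with $\vp_{z_0}(\rho,v)$ and $\vp_{z_0}(r,v)$ with $\vp_{z_0}(r,u)$ using the same splitting inequalities as before:
\[
\vp_{z_0}(\rho,u)\le 3\vp_{z_0}(\rho,v)+2\rho^{n+4}\!\!\int_{Q_\rho(z_0)}\!\!|\D(u-v)|^2+C(n)\rho^{n+2}\!\!\int_{Q_\rho(z_0)}\!\!|u-v|^2,
\]
and similarly for $\vp_{z_0}(r,v)$ in terms of $\vp_{z_0}(r,u)$. The crucial input is now Proposition~\ref{prop:par-sig-phi-est} applied to $v$, which supplies $\vp_{z_0}(\rho,v)\le C(n)(\rho/r)^{2n+6}\vp_{z_0}(r,v)$ for all $0<\rho<r$ — this is exactly the Signorini analogue of \eqref{eq:cal-Mor-est} and the reason Proposition~\ref{prop:par-sig-phi-est} was proved first. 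Combining these with the bounds $\int_{Q_r(z_0)}|\D(u-v)|^2\le 5r^\al\int_{Q_r(z_0)}|\D u|^2$ and $\int_{Q_r(z_0)}|u-v|^2\le Cr^{2+\al}\int_{Q_r(z_0)}|\D u|^2$, together with the trivial bound $r^{n+4}\int_{Q_r(z_0)}|\D u|^2\le \vp_{z_0}(r,u)$, yields for $0<\rho<r\le r_0(\al)$
\[
\vp_{z_0}(\rho,u)\le C(n)\left(\frac\rho r\right)^{2n+6}\vp_{z_0}(r,u)+C(n)r^{n+4+\al}\!\!\int_{Q_r(z_0)}\!\!|\D u|^2\le C(n)\left[\left(\frac\rho r\right)^{2n+6}+r^\al\right]\vp_{z_0}(r,u).
\]
Finally I would upgrade from the range $0<\rho<r\le r_0(\al)$ to all $0<\rho<r<1$ by the same two-case bookkeeping ($0<\rho<r_0<r<1$ and $r_0\le\rho<r<1$) already carried out in Proposition~\ref{prop:alm-cal-Mor-Camp-est}, using the monotonicity of $s\mapsto\vp_{z_0}(s,u)$ and absorbing constants into $C(n,\al)$.

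The main obstacle, and the only genuinely new point compared to the almost caloric case, is verifying that the Signorini solution $v$ with boundary data $u$ on $\pa_pQ_r(z_0)$ is simultaneously (i) an admissible competitor in the almost parabolic Signorini property of $u$, and (ii) such that the cross term $2\int_{Q_r(z_0)}\pa_t(u-v)(v-u)$ can be handled — i.e.\ that $v$ has enough time regularity ($\pa_tv\in L^2$) and the right sign in the variational inequality when tested against $v-u$. Establishing (i) requires that $g:=u|_{\pa_pQ_r(z_0)}$ admits such a Signorini solution with $\pa_tv\in L^2(Q_r(z_0))$, which follows from the standard existence theory for the parabolic Signorini problem given that $u\in W^{1,1}_2$ provides compatible data; once $v$ exists, the inequality $\int_{Q_r(z_0)}\D v\,\D(v-u)+\pa_tv(v-u)\le 0$ is immediate from the variational formulation \eqref{eq:par-sig-var-ineq} because $u$ is a valid test function ($u\ge 0=\psi$ on the thin space, $u=v$ on the parabolic boundary). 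Everything downstream is then a routine repetition of the almost caloric argument with \eqref{eq:cal-Mor-est} replaced by Proposition~\ref{prop:par-sig-phi-est}.
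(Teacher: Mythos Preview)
Your proposal is correct and follows essentially the same approach as the paper's proof: replace $u$ by the parabolic Signorini solution $v$ in $Q_r(z_0)$ with the same parabolic boundary data, use the almost Signorini property of $u$ together with the variational inequality of $v$ to bound $\int_{Q_r(z_0)}|\nabla(u-v)|^2$ by $r^\alpha\int_{Q_r(z_0)}(|\nabla u|^2+|\nabla v|^2)$, derive the Poincar\'e-type bound on $\int|u-v|^2$, and then repeat the $\varphi$-comparison argument of Proposition~\ref{prop:alm-cal-Mor-Camp-est} with \eqref{eq:cal-Mor-est} replaced by Proposition~\ref{prop:par-sig-phi-est}. The paper's proof is more terse---it just writes out the key chain for $\int|\nabla(u-v)|^2$ and then says ``follow Proposition~\ref{prop:alm-cal-Mor-Camp-est}''---but the content is the same, including the final extension from $r\le r_0(\alpha)$ to general $r$.
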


\begin{proof}
Let $v$ be the parabolic Signorini replacement of $u$ in $Q_r(z_0)$ (i.e. $v$ is the solution of the parabolic Signorini problem in $Q_r(z_0)$ with $v=u$ on $\pa_pQ_r(z_0)$). Then, by the almost Signorini property of $u$ and the variational inequality of $v$,
\begin{align*}
    \hspace{2em}&\hspace{-2em}\int_{Q_r(z_0)}|\D(u-v)|^2\\
    &=\int_{Q_r(z_0)}\left(|\D u|^2-|\D v|^2\right)+2\int_{Q_r(z_0)}\D v\D(v-u)\\
    &\le \left(2\int_{Q_r(z_0)}\pa_tu(v-u)+r^\al\int_{Q_r(z_0)}(|\D u|^2+|\D v|^2)\right)-2\int_{Q_r(z_0)}\pa_t v(v-u)
    \\
    &= 2\int_{Q_r(z_0)}\pa_t(u-v)(v-u)+r^\al\int_{Q_r(z_0)}(|\D u|^2+|\D v|^2)\\
    &\le r^\al\int_{Q_r(z_0)}(|\D u|^2+|\D v|^2).
\end{align*}
As we have seen in the proof of Proposition~\ref{prop:alm-cal-Mor-Camp-est}, this gives that for $r<r_0(\al)$ 
  \begin{align}\label{eq:par-sig-alm-min-grad-est}
\int_{Q_r(z_0)}|\D v|^2&\le 4\int_{Q_r(z_0)}|\D u|^2,\\
\label{eq:par-sig-alm-min-grad-diff-est}
\int_{Q_r(z_0)}|\D(u-v)|^2&\le C(n)r^{\al}\int_{Q_r(z_0)}|\D u|^2,\\
\int_{Q_r(z_0)}|u-v|^2&\le C(n)r^{2+\al}\int_{Q_r(z_0)}|\D u|^2.
\end{align}
With these inequalities and \eqref{eq:par-sig-Mor-est-2} at hand, we can follow Proposition~\ref{prop:alm-cal-Mor-Camp-est} again to conclude \eqref{eq:par-alm-min-Mor-est-thin}.
\end{proof}

Now we are ready to prove the first part of Theorem~\ref{mthm:I}.

\begin{theorem}\label{thm:par-alm-min-holder}
Let $u$ be an almost minimizer of the parabolic Signorini problem in $Q_1$. Then $u\in H^{\si,\si/2}(Q_1)$ for every $\si\in(0,1)$. Moreover, for any $K\Subset Q_1$, \begin{align}\label{eq:par-alm-min-holder-est}
\|u\|_{H^{\si,\si/2}(K)}\le C(K,n,\al,\si)\|u\|_{W^{1,0}_2(Q_1)}.
\end{align}
\end{theorem}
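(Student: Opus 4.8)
The plan is to combine the growth estimate from Proposition~\ref{prop:par-alm-min-Mor-est-thin} with the Campanato-type embedding Lemma~\ref{lem:par-camp-space-embed}, handling separately the points on the thin space $Q'_1$ (where we use the almost parabolic Signorini property) and points off the thin space (where the almost minimizer is essentially almost caloric, so Theorem~\ref{thm:alm-cal-reg} / Proposition~\ref{prop:alm-cal-Mor-Camp-est} applies). Fix $K\Subset Q_1$ and set $r_0:=\tfrac12 d_{\rm par}(K,\pa_pQ_1)$.

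\emph{Step 1: Points on the thin space.} For $z_0\in K\cap Q'_1$ and $0<\rho<r<r_0$, apply Proposition~\ref{prop:par-alm-min-Mor-est-thin} to get
$$
\vp_{z_0}(\rho,u)\le C(n,\al)\Bigl[(\rho/r)^{2n+6}+r^\al\Bigr]\vp_{z_0}(r,u).
$$
Given any $\si\in(0,1)$, since $2n+6>2n+4+2\si$, Lemma~\ref{lem:HL} (with $\gamma=2n+6$, $\be=2n+4+2\si$, $b=0$, and using that $r^\al$ plays the role of the $\e$-term once $r$ is small, which after a covering/rescaling argument as in the proof of Theorem~\ref{thm:alm-cal-reg} is legitimate) yields $\vp_{z_0}(\rho,u)\le C(\rho/r)^{2n+4+2\si}\vp_{z_0}(r,u)$ for $0<\rho<r<r_0$. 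Letting $r\nearrow r_0$ and bounding $\vp_{z_0}(r_0,u)\le C(n,K)\|u\|^2_{W^{1,0}_2(Q_1)}$ gives
$$
\int_{Q_\rho(z_0)\times Q_\rho(z_0)}|u(z)-u(w)|^2\,dzdw\le C(n,K,\al,\si)\|u\|^2_{W^{1,0}_2(Q_1)}\,\rho^{2n+4+2\si},
$$
and then, exactly as in Theorem~\ref{thm:alm-cal-reg}, the elementary bound $|\mean{u}_{z_0,\rho}-u(w)|^2\le C(n)\rho^{-n-2}\int_{Q_\rho(z_0)}|u(z)-u(w)|^2\,dz$ converts this into the Campanato-norm bound $\int_{Q_\rho(z_0)}|u-\mean{u}_{z_0,\rho}|^2\le C\|u\|^2_{W^{1,0}_2(Q_1)}\rho^{n+2+2\si}$.

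\emph{Step 2: Points off the thin space, and assembling.} For $z_0\in K$ with $x_0\cdot e_n\neq 0$, on any cylinder $Q_r(z_0)$ that avoids $Q'_1$ the function $u$ satisfies the (obstacle-free) almost caloric property at $z_0$, so Proposition~\ref{prop:alm-cal-Mor-Camp-est} gives the same growth inequality for $\vp_{z_0}$ and the same argument applies; for cylinders that reach the thin space one compares with the value at the nearest thin point using the already-established estimate there, in the standard way. In all cases one obtains $\int_{Q_\rho(z_0)}|u-\mean{u}_{z_0,\rho}|^2\le C\|u\|^2_{W^{1,0}_2(Q_1)}\rho^{n+2+2\si}$ for all $Q_\rho(z_0)\Subset Q_1$ with $\rho$ below a fixed radius. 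Lemma~\ref{lem:par-camp-space-embed} (with $M=C(n,K,\al,\si)\|u\|_{W^{1,0}_2(Q_1)}$) then yields $u\in H^{\si,\si/2}(K)$ together with the quantitative estimate \eqref{eq:par-alm-min-holder-est}. Since $K\Subset Q_1$ and $\si\in(0,1)$ were arbitrary, the proof is complete.

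The main obstacle is \textbf{Step 2}: reconciling the two regimes (thin and interior points) into a single Campanato estimate valid uniformly over all small cylinders $Q_\rho(z_0)\Subset Q_1$, regardless of whether $z_0$ lies on $Q'_1$ or how close it is. For an interior point $z_0$ whose distance to $Q'_1$ is comparable to $\rho$, the almost caloric estimate must be glued to the thin-point estimate by splitting a dyadically larger cylinder, and one must check the geometric series of error terms still sums to the desired $\rho^{n+2+2\si}$ growth; this is the same mechanism used implicitly in the elliptic paper \cite{JeoPet21} and in Theorem~\ref{thm:alm-cal-reg}, but the bookkeeping with the two functionals $\vp_{z_0}$ and $\int|u-\mean u|^2$ requires care.
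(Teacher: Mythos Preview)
Your approach matches the paper's: establish the growth estimate $\vp_{z_0}(\rho,u)\le C(n,\al)[(\rho/r)^{2n+6}+r^\al]\vp_{z_0}(r,u)$ for every $z_0\in K$ (thin or not), and then conclude via Lemma~\ref{lem:HL} and Lemma~\ref{lem:par-camp-space-embed} exactly as in the proof of Theorem~\ref{thm:alm-cal-reg}. Your Step~2 is in fact simpler than you anticipate: the paper handles an off-thin point $z_0$ by a finite case analysis on where $d=d_{\rm par}(z_0,Q'_1)$ sits relative to $\rho$ and $r/4$, using only the nested inclusions $Q_\rho(z_0)\subset Q_{2\rho}(z_1)\subset Q_{r/2}(z_1)\subset Q_r(z_0)$ (when $\rho\ge d$) or the analogous chain through $Q_d(z_0)$ (when $\rho<d<r/4$), where $z_1$ is the nearest thin point, together with Proposition~\ref{prop:par-alm-min-Mor-est-thin} applied at $z_1$, Proposition~\ref{prop:alm-cal-Mor-Camp-est} applied on $Q_d(z_0)\subset Q_1^+$, and the monotonicity of $r\mapsto\vp_{z_0}(r,u)$---no dyadic decomposition or geometric series of error terms is needed.
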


\begin{proof} 
We may assume that $K\Subset Q_1$ is a parabolic cylinder centered on $Q'_1$. We claim that for any $z_0\in K$ and $0<\rho<r<r_0=r_0(n,K,\al,\si)$, \begin{align}
    \label{eq:par-alm-min-Mor-est}
    \vp_{z_0}(\rho,u)\le C(n,\al)\left[\left(\frac\rho r\right)^{2n+6}+r^\al\right]\vp_{z_0}(r,u),
\end{align}
which readily implies that $u\in H^{\si,\si/2}$, see the proof of Theorem~\ref{thm:alm-cal-reg}.

Indeed, if $r_0(n,K,\al,\si)$ is small enough, then $Q_{r_0}(z_0)\Subset Q_1$ for all $z_0\in K$. If $z_0\in K\cap Q'_1$, then \eqref{eq:par-alm-min-Mor-est} follows from Proposition~\ref{prop:par-alm-min-Mor-est-thin}, thus we may assume that $z_0=(x_0,t_0)\in K\cap Q^+_1$. In addition, if $\rho\ge r/4$, then we simply have
$$
\vp_{z_0}(\rho,u) \le
4^{2n+6}\left(\frac{\rho}{r}\right)^{2n+6}\vp_{z_0}(r,u),$$ hence we may also assume $\rho<r/4$. Let $d:=d_{\rm par}(z_0,Q_1')=\dist(x_0,B_1')>0$ and choose $x_1\in \pa B_d(x_0)\cap B_1'$. Then $z_1:=(x_1,t_0)\in\pa_{p}Q_d(z_0)\cap Q_1'$. From the assumption that $K$ is a cylinder centered on $Q_1'$, we have $z_1\in K\cap Q_1'$.

\medskip\noindent \emph{Case 1.} If $\rho\ge d$, then we use
$Q_{\rho}(z_0)\subset Q_{2\rho}(z_1)\subset Q_{r/2}(z_1)\subset
Q_r(z_0)$ and Proposition~\ref{prop:par-alm-min-Mor-est-thin} to write
\begin{align*}
  \vp_{z_0}(\rho,u)
  &\le \vp_{z_1}(2\rho,u)
    \le C\left[\left(\frac{2\rho}{r/2}\right)^{2n+6} +
    (r/2)^{\al}\right]\vp_{z_1}(r/2,u)\\
  &\le
    C\left[\left(\frac{\rho}{r}\right)^{2n+6}+r^{\al}\right]\vp_{z_0}(r,u).
\end{align*}

\medskip\noindent \emph{Case 2.}  Suppose now $d>\rho$. If $d>r$,
then $Q_r(z_0)\Subset Q_1^+$. Since $u$ is almost caloric in $Q_1^+$,
we can apply Proposition~\ref{prop:alm-cal-Mor-Camp-est} to have
$$
\vp_{z_0}(\rho, u)\le C(n,\al)\left[\left(\frac\rho r\right)^{2n+6}+r^\al\right]\vp_{z_0}(r,u).
$$
Thus, we may assume $d\le r$. We note that
$Q_d(z_0)\subset Q_1^+$ and, using a limiting argument from the previous
estimate, obtain
$$
\vp_{z_0}(\rho, u)\le C(n,\al)\left[\left(\frac\rho d\right)^{2n+6}+d^\al\right]\vp_{z_0}(d,u).
$$

\medskip\noindent \emph{Case 2.1.} If $r/4\le d$, then
$$
\vp_{z_0}(d,u) \le
4^{2n+6}\left(\frac{d}{r}\right)^{2n+6}\vp_{z_0}(r,u),
$$ 
which combined with the previous inequality implies \eqref{eq:par-alm-min-Mor-est}.

\medskip\noindent \emph{Case 2.2.}  It remains to consider the case
$\rho<d<r/4$. Using Proposition~\ref{prop:par-alm-min-Mor-est-thin} again, we conclude that
\begin{align*}
  \vp_{z_0}(d,u)
  &\le \vp_{z_1}(2d,u) \le C\left[\left(\frac{2d}{r/2}\right)^{2n+6}+(r/2)^{\al}\right]\vp_{z_1}(r/2,u) \\
  &\le C\left[\left(\frac{d}{r}\right)^{2n+6}+r^{\al}\right]\vp_{z_0}(r,u),
\end{align*}
which also implies \eqref{eq:par-alm-min-Mor-est}. This completes the proof.
\end{proof}

%%%%%%%%%%%%%%%%%%%%%%%%%%%%%%%%%%%%%%%%%%%%%%%%

\section{Higher regularity of almost minimizers}\label{sec:alm-min-grad-holder}

In this section we establish the higher regularity of the almost minimizer $u$, specifically the $H^{\be,\be/2}$-regularity of its spatial derivative $\D u$. We make use of the results in the previous sections to get a proper version of concentric cylinder estimates \eqref{eq:par-alm-min-grad-Camp-est-thin}, following the idea in \cite{JeoPet21}.

For a function $w\in W^{1,1}_2(Q_1)$, we define
$$
\wDw(x',x_n,t):=
\begin{cases}\D w(x', x_n,t),& x_n\ge 0, \\
  \D w(x', -x_n,t), &x_n<0,
\end{cases}
$$
the even extension of $\D w$ from $Q_R^+(z_0)$ to $Q_R(z_0)$.

\begin{proposition}\label{prop:par-sig-Camp-est} Let $v$ be a solution of the
  parabolic Signorini problem in $Q_R(z_0)\subset Q_1$ with $z_0\in Q'_1$. Then for $0<\rho\le R/2$, $$
\int_{Q_\rho(z_0)}|\wDv-\mean{\wDv}_{z_0,\rho}|^2\le C(n)\frac{\rho^{n+3}}{R^{n+5}}\int_{Q_R(z_0)}v^2.
$$
\end{proposition}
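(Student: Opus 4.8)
The plan is to obtain the estimate from the optimal $H^{3/2,3/4}$-regularity of solutions of the parabolic Signorini problem (recalled at the beginning of Section~\ref{sec:alm-min-holder}) and then transfer it to the even extension $\wDv$ by a reflection that does not increase parabolic distances. \emph{Step 1 (scaled gradient H\"older bound).} First I would rescale $Q_R(z_0)$ to $Q_1$ by $\wv(y,s):=v(x_0+Ry,t_0+R^2s)$; since $z_0\in Q'_1$ and the thin space is flat with zero obstacle, $\wv$ is again a solution of the parabolic Signorini problem in $Q_1$, so the interior estimate gives $\|\wv\|_{H^{3/2,3/4}(Q_{1/2}^{\pm}\cup Q'_{1/2})}\le C(n)\|\wv\|_{L^2(Q_1)}$. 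The seminorms $[\D v]^{(1/2)}_{x}$ and $[\D v]^{(1/4)}_{t}$ are both dominated by the $H^{3/2,3/4}$-norm and pick up a factor $R^{3/2}$ under this parabolic scaling, while $\|\wv\|_{L^2(Q_1)}=R^{-(n+2)/2}\|v\|_{L^2(Q_R(z_0))}$; undoing the scaling yields
\[
\Lambda:=[\D v]^{(1/2)}_{x,\,Q_{R/2}^{\pm}(z_0)}+[\D v]^{(1/4)}_{t,\,Q_{R/2}^{\pm}(z_0)}\ \le\ C(n)\,R^{-(n+5)/2}\,\|v\|_{L^2(Q_R(z_0))}.
\]

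\emph{Step 2 (transfer to the even extension).} For $z,w\in Q_{R/2}(z_0)$, let $z^{\ast},w^{\ast}$ be the reflections across $\{x_n=0\}$ of whichever of $z,w$ lie in the lower half-cylinder (and $z^{\ast}=z$, $w^{\ast}=w$ otherwise). Since $z_0\in Q'_1$, both $z^{\ast},w^{\ast}$ lie in $\overline{Q_{R/2}^{+}(z_0)}$, and by definition $\wDv(z)=\D v(z^{\ast})$, $\wDv(w)=\D v(w^{\ast})$. Because $|a-b|\le a+b$ for $a,b\ge0$, reflecting a point into the upper half-cylinder does not increase its parabolic distance to a point already there, i.e.\ $d_{\rm par}(z^{\ast},w^{\ast})\le d_{\rm par}(z,w)$. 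Hence, using Step 1 together with $|x-y|^{1/2},\,|t-s|^{1/4}\le d_{\rm par}((x,t),(y,s))^{1/2}$,
\[
|\wDv(z)-\wDv(w)|\ =\ |\D v(z^{\ast})-\D v(w^{\ast})|\ \le\ \Lambda\, d_{\rm par}(z^{\ast},w^{\ast})^{1/2}\ \le\ \Lambda\, d_{\rm par}(z,w)^{1/2}.
\]

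\emph{Step 3 (Campanato integral).} For $0<\rho\le R/2$ we have $Q_\rho(z_0)\subset Q_{R/2}(z_0)$, $|Q_\rho(z_0)|=\omega_n\rho^{n+2}$, and $d_{\rm par}(z,w)\le C(n)\rho$ for $z,w\in Q_\rho(z_0)$. Jensen's inequality and Step 2 then give
\[
\int_{Q_\rho(z_0)}|\wDv-\mean{\wDv}_{z_0,\rho}|^2\ \le\ \frac{1}{|Q_\rho(z_0)|}\int_{Q_\rho(z_0)\times Q_\rho(z_0)}|\wDv(z)-\wDv(w)|^2\,dz\,dw\ \le\ C(n)\,\Lambda^{2}\,\rho\,|Q_\rho(z_0)|,
\]
and the right-hand side equals $C(n)\rho^{n+3}\Lambda^{2}=C(n)\dfrac{\rho^{n+3}}{R^{n+5}}\int_{Q_R(z_0)}v^2$, which is the assertion.

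The step I expect to be the main obstacle is Step 2: one must make sure the even extension of the \emph{full} gradient $\D v$ (not only its tangential part) is parabolically $1/2$-H\"older across the thin space, which rests on the reflection being $1$-Lipschitz for $d_{\rm par}$ — precisely the place where $z_0\in Q'_1$ is used. Steps 1 and 3 are routine once the parabolic scaling exponents in Step 1 are tracked carefully.
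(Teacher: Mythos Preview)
Your proof is correct and follows essentially the same route as the paper. The paper condenses your Steps~2--3 into a single line by observing that the even symmetry of $\wDv$ gives
\[
\int_{Q_\rho(z_0)}|\wDv-\mean{\wDv}_{z_0,\rho}|^2=2\int_{Q_\rho^+(z_0)}|\D v-\mean{\D v}_{Q_\rho^+(z_0)}|^2
\]
and then applies the $H^{1/2,1/4}$ estimate for $\D v$ on $\overline{Q_{R/2}^+(z_0)}$ directly on the right-hand side; your reflection argument (that $z\mapsto z^\ast$ is $1$-Lipschitz for $d_{\rm par}$, hence $\wDv$ inherits the parabolic $1/2$-H\"older bound on the full cylinder) is the equivalent statement phrased on the other side of the even extension. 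Either way the key input is exactly the optimal regularity estimate recalled at the start of Section~\ref{sec:alm-min-holder}, with the same scaling computation as in your Step~1.
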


\begin{proof}
The proposition follows from the even symmetry of $\wDv$ and $H^{1/2,1/4}$-estimate of $\D v$ in $\overline{Q_{R/2}^+(z_0)}$:
 \begin{align*}
    \int_{Q_\rho(z_0)}|\wDv-\mean{\wDv}_{z_0,\rho}|^2&= 2\int_{Q_\rho^+(z_0)}|\D v-\mean{\D v}_{Q_\rho^+(z_0)}|^2\\
    &\le C(n)\rho^{n+3}\|\D v\|^2_{H^{1/2,1/4}(Q_{R/2}(z_0))}\\
    &\le C(n)\frac{\rho^{n+3}}{R^{n+5}}\int_{Q_R(z_0)}v^2.\qedhere
\end{align*}
\end{proof}

\begin{proposition}\label{prop:par-alm-min-grad-Camp-est-thin}
Let $K\Subset Q_1$ and suppose $u\in W^{1,1}_2(Q_1)$ satisfies the almost parabolic Signorini property at $z_0\in K'=K\cap Q_1'$. Then there exists a constant $r_0=r_0(n,\al,K)>0$ such that 
\begin{multline}\label{eq:par-alm-min-grad-Camp-est-thin}
    \int_{Q_\rho(z_0)}|\wDu-\mean{\wDu}_{z_0,\rho}|^2\le C(n)\left(\frac\rho r\right)^{n+4}\int_{Q_r(z_0)}|\wDu-\mean{\wDu}_{z_0,r}|^2\\ +C(n,\al,K)\|u\|^2_{W^{1,0}_2(Q_1)}r^{n+2+2\be},
\end{multline}
for $0<\rho<r<r_0$ with $\be=\frac\al{4(2n+4+\al)}.$
\end{proposition}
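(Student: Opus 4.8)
The plan is to compare $u$ with its parabolic Signorini replacement $v$ in $Q_r(z_0)$, exactly as in the proof of Proposition~\ref{prop:par-alm-min-Mor-est-thin}, and to transfer the Campanato-type decay for the caloric-side gradient $\wDv$ (Proposition~\ref{prop:par-sig-Camp-est}) to $\wDu$, picking up an error term of order $r^\al$ times a Dirichlet integral, which is then absorbed by the already-established growth estimates for $\vp_{z_0}(r,u)$ from Theorem~\ref{thm:par-alm-min-holder}. First I would fix $K\Subset Q_1$, assume $K$ is a parabolic cylinder centered on $Q_1'$, and choose $r_0=r_0(n,\al,K)$ small enough that $Q_{r_0}(z_0)\Subset Q_1$ for all $z_0\in K$ and that $r_0<r_0(\al)$ from Proposition~\ref{prop:par-alm-min-Mor-est-thin}, so that the estimates \eqref{eq:par-sig-alm-min-grad-est}--\eqref{eq:par-sig-alm-min-grad-diff-est} are available. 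For $0<\rho<r<r_0$ I would first handle the trivial range $\rho\ge r/2$ by monotonicity/scaling, so I may assume $\rho<r/2$.

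Next, with $v$ the Signorini replacement of $u$ in $Q_r(z_0)$, write $\wDu=\wDv+\widehat{\D(u-v)}$ (the even extension of a sum is the sum of even extensions) and split
\begin{align*}
\int_{Q_\rho(z_0)}|\wDu-\mean{\wDu}_{z_0,\rho}|^2
&\le 3\int_{Q_\rho(z_0)}|\wDv-\mean{\wDv}_{z_0,\rho}|^2
+3\int_{Q_\rho(z_0)}|\widehat{\D(u-v)}-\mean{\widehat{\D(u-v)}}_{z_0,\rho}|^2.
\end{align*}
For the first term I apply Proposition~\ref{prop:par-sig-Camp-est} to get a bound $C(n)(\rho/r)^{n+3} r^{-(n+5)}\int_{Q_r(z_0)}v^2$; since $v=u$ on $\pa_pQ_r(z_0)$ one has $\int_{Q_r(z_0)}v^2\le C\int_{Q_r(z_0)}u^2$ (or, more usefully, $\le C(n)r^{n+4}\vp$-type control after subtracting the appropriate constant $a_{v,z_0,r}$ and using \eqref{eq:par-sig-Mor-est-1}), so this is controlled by $(\rho/r)^{n+3}$ times a scaled Dirichlet/oscillation quantity; actually the cleanest route is to keep it in the form $C(n)(\rho/r)^{n+4}\int_{Q_r(z_0)}|\wDu-\mean{\wDu}_{z_0,r}|^2+C(\rho/r)^{n+3}r^{-(n+5)}\int_{Q_r}|u-a_{v,z_0,r}|^2$ by the usual trick of comparing $\mean{\wDv}_{z_0,\rho}$ with $\mean{\wDu}_{z_0,r}$. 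For the second term I bound it by $4\int_{Q_\rho(z_0)}|\widehat{\D(u-v)}|^2\le 8\int_{Q_\rho^+(z_0)}|\D(u-v)|^2\le 8\int_{Q_r(z_0)}|\D(u-v)|^2\le C(n)r^\al\int_{Q_r(z_0)}|\D u|^2$ using \eqref{eq:par-sig-alm-min-grad-diff-est}.

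Then I invoke the Hölder-regularity estimates already proved: by Theorem~\ref{thm:par-alm-min-holder} and \eqref{eq:par-alm-min-holder-est}, applied with $\si$ close to $1$ (say $\si=1-\al/4$), we have $\int_{Q_r(z_0)}|\D u|^2\le C(n,\al,K)\|u\|_{W^{1,0}_2(Q_1)}^2 r^{n+2\si}=C\|u\|^2 r^{n+2-\al/2}$ and, after subtracting the constant $a_{v,z_0,r}$, $\int_{Q_r(z_0)}|u-a_{v,z_0,r}|^2\le C\|u\|^2 r^{n+4}$ (from the Campanato/Morrey bounds embedded in $\vp_{z_0}$), so the error terms combine into $C(n,\al,K)\|u\|^2 r^{n+2+2\be}$ with $2\be=\min\{\al/2,1\}/(\text{something})$; chasing the exponents through the iteration lemma \cite{HanLin97} (Lemma~\ref{lem:HL}) converts the mixed inequality $\phi(\rho)\le C[(\rho/r)^{n+4}+\e]\phi(r)+b r^{n+2+\al/2}$ into $\phi(\rho)\le C(\rho/r)^{n+2+2\be}\phi(r)+Cb\rho^{n+2+2\be}$, yielding \eqref{eq:par-alm-min-grad-Camp-est-thin} with $\be=\frac{\al}{4(2n+4+\al)}$ once the $r_0<1$ and the $0<\rho<r_0<r<1$ cases are patched by monotonicity as in Proposition~\ref{prop:alm-cal-Mor-Camp-est}. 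The main obstacle I anticipate is bookkeeping the exponent: to get a clean $r^{n+2+2\be}$ error one must feed in the Hölder estimate with $\si$ slightly below $1$, then iterate with Lemma~\ref{lem:HL} so that the decay rate $n+2+2\be$ (with $\be$ as stated) emerges — and one has to check that the Signorini replacement's $H^{1/2,1/4}$ gradient estimate (which needs $Q_R(z_0)\subset Q_1$ and $z_0\in Q_1'$, valid since we reduced to cylinders centered on $Q_1'$) is legitimately invoked on $Q_r(z_0)$ rather than on a larger cylinder, so a factor of $k(n)$ as in Proposition~\ref{prop:par-sig-phi-est} may need to be carried along; handling the quantitative constant $a_{v,z_0,r}$ and the off-thin-space points (Case~2 of Theorem~\ref{thm:par-alm-min-holder}, where $u$ is merely almost caloric) will also require the same case split, though here the statement only claims the estimate at $z_0\in K'$, which simplifies matters.
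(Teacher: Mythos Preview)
Your approach has a genuine gap at the core: Proposition~\ref{prop:par-sig-Camp-est} only yields Campanato decay with exponent $n+3$, not $n+4$, because the Signorini solution's gradient is merely $H^{1/2,1/4}$. So the bound you obtain for the first term is $C(n)\rho^{n+3}r^{-(n+5)}\int_{Q_r(z_0)}v^2$, and there is no ``usual trick'' that converts this into $C(\rho/r)^{n+4}\int_{Q_r(z_0)}|\wDu-\mean{\wDu}_{z_0,r}|^2$ plus a lower-order remainder. If you bound $\int_{Q_r(z_0)}v^2\le C\|u\|^2 r^{n+2+2\si}$ using the H\"older regularity of $u$ and the maximum principle, the resulting error is of size $r^{n+2\si}$ with $\si<1$, which is strictly worse than the required $r^{n+2+2\be}$, and this term is not in the form $a[(\rho/r)^\gamma+\e]\phi(r)$ needed to invoke Lemma~\ref{lem:HL}.

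What the paper does instead is a \emph{two-scale} argument: it takes the Signorini replacement $v$ on a larger cylinder $Q_R(z_0)$ with $R=r^{(2n+4)/(2n+4+\al)}$, and splits into cases depending on whether $\sup_{\pa_p Q_R}|u|\le C_3 R^{\al'}$ for $\al'=1-\al/(8(n+2))$. In Case~1 (small boundary data), Proposition~\ref{prop:par-sig-Camp-est} applied on $Q_R$ gives $\int_{Q_\rho}|\wDv-\mean{\wDv}_\rho|^2\le C r^{n+3}R^{2\al'-3}\|u\|^2$, and the specific choice of $R$ makes this equal $C\|u\|^2 r^{n+2+2\be}$; the $(\rho/r)^{n+4}$ term in the statement is vacuously satisfied in this case. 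In Case~2 (large boundary data), the H\"older continuity forces $u>0$ on $Q_R$, so the Signorini replacement $v$ is actually caloric there, and one gets the genuine $(\rho/r)^{n+4}$ decay from Proposition~\ref{prop:cal-Mor-Camp-est}; a separate argument controls the symmetric/antisymmetric parts of $v$ to pass from $\D v$ to $\wDv$. The exponent $\be=\al/(4(2n+4+\al))$ emerges precisely from balancing the scales $r$ and $R$, not from iterating Lemma~\ref{lem:HL}.
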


\begin{proof}
Let $r\in(0,r_0)$ with $r_0=r_0(n,\al,K)$ to be chosen below. Let us also denote
$$
\al':=1-\frac{\al}{8(n+2)}\in (0, 1),\quad R:=r^{\frac{2n+4}{2n+4+\al}}.
$$
Note that if $r_0=r_0(n,\al,K)$ is small, then $
\widetilde{K}:=\{z\in Q_1: d_{\rm par}(z,K)\le r_0^{\frac{2n+4}{2n+4+\al}}\}\Subset Q_1$ and $r\le R/2$. For notational simplicity, we assume $z_0=0$. We then split our proof into two cases:
$$
\sup_{\pa_p Q_{R}}|u|\le C_3 R ^{\al'}\quad\text{and}\quad\sup_{\pa_p
  Q_{R}}|u|> C_3R ^{\al'},$$ with
$C_3=2[u]_{H^{\al',\al'/2}(\K)}$.

\medskip\noindent \emph{Case 1.}  Assume first that
$\sup_{\pa_p Q_{R}}|u|\le C_3 R ^{\al'}$. Let $v$ be the parabolic Signorini
replacement of $u$ in $Q_{R}$. Then, Proposition~\ref{prop:par-sig-Camp-est} yields that for any $0<\rho<r$,
\begin{align*}
    \int_{Q_{\rho}}|\wDu-\mean{\wDu}_{\rho}|^2 &\le 3\int_{Q_{\rho}}|\wDv-\mean{\wDv}_{\rho}|^2 + 6\int_{Q_{\rho}}|\wDu-\wDv|^2 \\
     &\le C(n)\left(\int_{Q_{R}}v^2\right)\frac{r^{n+3}}{R^{n+5}}+6\int_{Q_{\rho}}|\wDu-\wDv|^2.
\end{align*}
Now, we estimate the two terms in the last line. For the first term, we observe that since $v^\pm=\max\{\pm v,0\}$ are subcaloric, $$
\sup_{Q_R}|v|=\sup_{\pa_pQ_R}|v|=\sup_{\pa_pQ_R}|u|\le C_3R^{\al'}.
$$
Combining this with the estimate $C_3\le C(n, \al, K)\|u\|_{W^{1, 0}_2(Q_1)}$, which follows from \eqref{eq:par-alm-min-holder-est}, we get
\begin{align*}
    \left(\int_{Q_{R}}v^2\right)\frac{r^{n+3}}{R^{n+5}}
    &\le C(n)\left(\sup_{Q_R}|v|\right)^2\frac{r^{n+3}}{R^{3}}\le C(n,\al,K)\|u\|_{W^{1,0}_2(Q_1)}^2r^{n+3}R^{2\al'-3}\\
    &\le C(n, \al, K)\|u\|_{W^{1,0}_2(Q_1)}^2r^{n+2+2\be}.
\end{align*}
For the second term $\int_{Q_{\rho}}|\wDu-\wDv|^2$, we apply Lemma~\ref{lem:HL} to \eqref{eq:par-alm-min-Mor-est-thin} to have for $0<R<S<R_0:=r_0^{\frac{2n+4}{2n+4+\al}}$
$$
\vp_0(R,u)\le C(n,\al)\left(\frac R S\right)^{2n+4+2\al'}\vp_0(S,u).
$$
Letting $S\nearrow R_0$ gives \begin{align}
    \label{eq:par-alm-min-grad-L2-bound}
    \int_{Q_R}|\D u|^2\le C(n,\al,K)\|u\|^2_{W^{1,0}_2(Q_1)}R^{n+2\al'}.
\end{align}
Equations \eqref{eq:par-sig-alm-min-grad-diff-est} and \eqref{eq:par-alm-min-grad-L2-bound} allow us to bound the second term
\begin{align}\label{eq:par-sig-alm-min-diff-Camp-est}
    \int_{Q_r}|\wDu-\wDv|^2 & \le \int_{Q_R}|\D(u-v)|^2  \le C(n)R^{\al}\int_{Q_{R}}|\D u|^2 \\
    &\nonumber \le  C(n, \al, K)\|u\|^2_{W^{1,0}_2(Q_1)}R^{n+\al+2\al'} \\
    &\nonumber \le C(n, \al, K)\|u\|^2_{W^{1,0}_2(Q_1)}r^{n+2+2\be}.
\end{align}
Therefore, we get 
$$
  \int_{Q_\rho}|\wDu-\mean{\wDu}_{\rho}|^2\le C(n,\al,K)\|u\|^2_{W^{1,0}_2(Q_1)}r^{n+2+2\be}.
$$

\medskip\noindent \emph{Case 2.}  Now suppose
$\sup_{\pa_p Q_{R}}|u|> C_3 R ^{\al'}$. By the choice of
$C_3=2[u]_{H^{\al',\al'/2}(\K)}$, we have either $u\ge (C_3/2) R^{\al'}$ in
all of $Q_{R}$ or $u\le -(C_3/2) R^{\al'}$ in all of $Q_{R}$. However,
from the inequality $u(0)\ge 0$, the only possibility is
$$
u\ge \frac{C_3}{2}\,R^{\al'}\quad \text{in } Q_{R}.
$$
Let $v$ again be the parabolic Signorini replacement of $u$ in $Q_R$. Then from
positivity of $v=u>0$ on $\pa_p Q_{R}$ and supercaloricity of $v$ in
$Q_{R}$, it follows that $v>0$ in $Q_{R}$ and is therefore caloric
there. Thus,
\begin{align}\label{eq:cal-grad-Camp-est}
\int_{Q_{\rho}}|\D v-\mean{\D v}_{\rho}|^2 \le
C(n)\Bigl(\frac{\rho}{r}\Bigr)^{n+4}\int_{Q_r}|\D v-\mean{\D v}_r|^2,\quad
0<\rho<r.
\end{align}
We next decompose $v$ into the sum of even and odd functions in $x_n$ \begin{align*}
    v(x',x_n,t)&=\frac{v(x',x_n,t)+v(x',-x_n,t)}2+\frac{v(x',x_n,t)-v(x',-x_n,t)}2\\
    &=:v^*(x',x_n,t)+v^\sharp(x',x_n,t).
\end{align*}
Then we have 
\begin{align*}
    \int_{Q_\rho}|\wDv-\mean{\wDv}_\rho|^2&\le 3\int_{Q_\rho}|\D v-\mean{\D v}_\rho|^2+6\int_{Q_\rho}|\wDv-\D v|^2\\
    &= 3\int_{Q_\rho}|\D v-\mean{\D v}_\rho|^2+6\int_{Q_\rho^-}(|2\pa_{x_n}v^*|^2+|2\D_{x'}v^\sharp|^2)\\
    &=3\int_{Q_\rho}|\D v-\mean{\D v}_\rho|^2+12\int_{Q_\rho}(|\pa_{x_n}v^*|^2+|\D_{x'}v^\sharp|^2).
\end{align*}
Similarly, we also have $$
\int_{Q_r}|\D v-\mean{\D v}_r|^2\le 3\int_{Q_r}|\wDv-\mean{\wDv}_r|^2+12\int_{Q_r}(|\pa_{x_n}v^*|^2+|\D_{x'}v^\sharp|^2).
$$
The above two inequalities, together with \eqref{eq:cal-grad-Camp-est}, yield that for all $0<\rho<r$
\begin{equation}
    \label{eq:cal-grad-sym-Camp-est}
    \int_{Q_\rho}|\wDv-\mean{\wDv}_\rho|^2\le C(n)\left(\frac\rho r\right)^{n+4}\int_{Q_r}|\wDv-\mean{\wDv}_r|^2+C(n)\int_{Q_r}(|\pa_{x_n}v^*|^2+|\D_{x'}v^\sharp|^2).
\end{equation}
Next, we estimate the last term in \eqref{eq:cal-grad-sym-Camp-est}. Note that since both $v$ and $v^\sharp$ are caloric in $Q_R$, $v^*$ is caloric in $Q_R$ as well. Combining this with \eqref{eq:par-sig-alm-min-grad-est} and \eqref{eq:par-alm-min-grad-L2-bound}, 
\begin{align*}
    \sup_{Q_{R/2}}|D^2_xv^*|^2+\sup_{Q_{R/2}}|D^2_xv^\sharp|^2&\le \frac{C(n)}{R^{n+4}}\int_{Q_R}(|\D v^*|^2+|\D v^\sharp|^2)\\
    &=\frac{C(n)}{R^{n+4}}\int_{Q_R}|\D v|^2\le \frac{C(n)}{R^{n+4}}\int_{Q_R}|\D u|^2\\
    &\le C(n,\al,K)\|u\|^2_{W^{1,0}_2(Q_1)}R^{2\al'-4}.
\end{align*}
Also, note that $\pa_{x_n}v^*=\D_{x'}v^\sharp=0$ on $Q'_{R/2}$. Thus, for $z=(x',x_n,t)\in Q_r$, we have 
\begin{align*}
    |\pa_{x_n}v^*(z)|^2+|\D_{x'}v^\sharp(z)|^2&\le |x_n|^2\left(\sup_{Q_{R/2}}|D^2_xv^*|^2+\sup_{Q_{R/2}}|D^2_xv^\sharp|^2\right)\\
    &\le C(n,\al,K)\|u\|^2_{W^{1,0}_2(Q_1)}r^2R^{2\al'-4}.
\end{align*}
Thus, it follows that 
\begin{align*}
    \int_{Q_r}(|\pa_{x_n}v^*|^2+|\D_{x'}v^\sharp|^2)&\le C(n,\al,K)\|u\|^2_{W^{1,0}_2(Q_1)}r^{n+4}R^{2\al'-4}\\
    &\le C(n,\al,K)\|u\|^2_{W^{1,0}_2(Q_1)}r^{n+2+2\be}.
\end{align*}
Combining this estimate and \eqref{eq:cal-grad-sym-Camp-est}, we obtain 
\begin{equation*}
    \int_{Q_\rho}|\wDv-\mean{\wDv}_\rho|^2\le C(n)\left(\frac\rho r\right)^{n+4}\int_{Q_r}|\wDv-\mean{\wDv}_r|^2+C(n,\al,K)\|u\|^2_{W^{1,0}_2(Q_1)}r^{n+2+2\be}.
\end{equation*}
Finally, by making use of this inequality and \eqref{eq:par-sig-alm-min-diff-Camp-est} (the equation \eqref{eq:par-sig-alm-min-diff-Camp-est} is valid in this case as well, for we have proved it in \emph{Case 1} without using the assumption $\sup_{\pa_pQ_R}|u|\le C_3R^{\al'}$), we conclude that
\begin{align*}
    \hspace{2em}&\hspace{-2em}\int_{Q_{\rho}}|\wDu-\mean{\wDu}_{\rho}|^2\\
    &\le 3\int_{Q_{\rho}}|\wDv-\mean{\wDv}_{\rho}|^2 + 6\int_{Q_{\rho}}|\wDu-\wDv|^2 \\
    &\le C(n)\Bigl(\frac{\rho}{r}\Bigr)^{n+4}\int_{Q_r}|\wDv-\mean{\wDv}_r|^2
    +C(n, \al, K)\|u\|_{W^{1,0}_2(Q_1)}^2r^{n+2+2\be}+6\int_{Q_{\rho}}|\wDu-\wDv|^2\\
    &\le C(n)\Bigl(\frac{\rho}{r}\Bigr)^{n+4}\int_{Q_r}|\wDu-\mean{\wDu}_r|^2
      +C(n, \al, K)\|u\|_{W^{1,0}_2(Q_1)}^2r^{n+2+2\be}+C\int_{Q_r}|\wDu-\wDv|^2\\
      &\le
      C(n)\Bigl(\frac{\rho}{r}\Bigr)^{n+4}\int_{Q_r}|\wDu-\mean{\wDu}_r|^2+ C(n, \al, K)\|u\|_{W^{1,0}_2(Q_1)}^2r^{n+2+2\be}.\qedhere
\end{align*}
\end{proof}

\begin{theorem}\label{thm:par-alm-min-grad-holder}
Let $u$ be an almost minimizer for the parabolic Signorini problem in $Q_1$. Then 
$$\wDu\in H^{\be,\be/2}(Q_1)\quad\text{with }\be=\frac\al{4(2n+4+\al)}.$$
Moreover, for any $K\Subset Q_1$ there holds \begin{align*}
    \|\wDu\|_{H^{\be,\be/2}(K)}\le C(n,\al,K)\|u\|_{W^{1,0}_2(Q_1)}.
\end{align*}
\end{theorem}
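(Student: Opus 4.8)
The plan is to derive the $H^{\be,\be/2}$-regularity of $\wDu$ from the concentric cylinder estimate in Proposition~\ref{prop:par-alm-min-grad-Camp-est-thin} combined with the (already-established) $H^{\al/2,\al/4}_{\loc}$-regularity of $\D u$ in $Q_1^+\cup Q_1^-$ coming from Theorem~\ref{thm:alm-cal-reg}, and then feed everything into the parabolic Campanato embedding Lemma~\ref{lem:par-camp-space-embed}. Fix $K\Subset Q_1$; it suffices to prove the estimate on $K$, and we may assume $K$ is a parabolic cylinder centered on $Q_1'$. Let $r_0=r_0(n,\al,K)>0$ be small enough that the conclusion of Proposition~\ref{prop:par-alm-min-grad-Camp-est-thin} holds and $Q_{r_0}(z_0)\Subset Q_1$ for all $z_0\in K$.

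First I would handle points $z_0\in K'=K\cap Q_1'$. For such points Proposition~\ref{prop:par-alm-min-grad-Camp-est-thin} gives
$$
\int_{Q_\rho(z_0)}|\wDu-\mean{\wDu}_{z_0,\rho}|^2\le C\left(\frac\rho r\right)^{n+4}\int_{Q_r(z_0)}|\wDu-\mean{\wDu}_{z_0,r}|^2+C\|u\|^2_{W^{1,0}_2(Q_1)}r^{n+2+2\be}
$$
for $0<\rho<r<r_0$, with $\be=\frac{\al}{4(2n+4+\al)}$. Since $n+4>n+2+2\be$, Lemma~\ref{lem:HL} (applied with $\phi(\rho)=\int_{Q_\rho(z_0)}|\wDu-\mean{\wDu}_{z_0,\rho}|^2$, $\gamma=n+4$, $\be$ replaced by $n+2+2\be$, and $b=C\|u\|^2_{W^{1,0}_2(Q_1)}$) yields
$$
\int_{Q_\rho(z_0)}|\wDu-\mean{\wDu}_{z_0,\rho}|^2\le C\|u\|^2_{W^{1,0}_2(Q_1)}\,\rho^{n+2+2\be},\quad 0<\rho<r_0,\ z_0\in K'.
$$

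Next I would propagate this to points off the thin space, i.e.\ $z_0=(x_0,t_0)\in K\cap Q_1^+$ (the case $Q_1^-$ being symmetric by the even reflection). Set $d:=d_{\rm par}(z_0,Q_1')=\dist(x_0,B_1')$ and pick $z_1\in\pa_pQ_d(z_0)\cap Q_1'$ with $z_1\in K'$ (possible since $K$ is centered on $Q_1'$). The decomposition follows the pattern already used in the proof of Theorem~\ref{thm:par-alm-min-holder}: for $\rho\ge d$ one compares $Q_\rho(z_0)\subset Q_{2\rho}(z_1)$ and uses the estimate just proved at $z_1\in K'$; for $\rho<d$ one uses that $\wDu=\D u$ near $z_0$ and that, by Theorem~\ref{thm:alm-cal-reg} applied to the almost caloric function $u$ in $Q_1^+$ (recall $Q_d(z_0)\subset Q_1^+$), $\D u\in H^{\al/2,\al/4}$ locally, which since $2\be\le\al/2$ gives the bound $\int_{Q_\rho(z_0)}|\D u-\mean{\D u}_{z_0,\rho}|^2\le C\|u\|^2_{W^{1,0}_2(Q_1)}\rho^{n+2+2\be}$ for $\rho$ up to a fixed fraction of $d$; an intermediate-scale argument (as in \emph{Case 2.1} and \emph{Case 2.2} of Theorem~\ref{thm:par-alm-min-holder}, splitting according to whether $d$ is comparable to $r$) interpolates the two regimes. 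In all cases one arrives at
$$
\int_{Q_\rho(z_0)}|\wDu-\mean{\wDu}_{z_0,\rho}|^2\le C(n,\al,K)\|u\|^2_{W^{1,0}_2(Q_1)}\,\rho^{n+2+2\be},\quad 0<\rho<r_0,
$$
for every $z_0\in K$. Finally, since $\|\D u\|_{L^2(Q_{1/2})}\le C\|u\|_{W^{1,0}_2(Q_1)}$ (indeed $\wDu\in L^2$ with this bound, using the $L^2$-bound on $\D u$ near the thin space from \eqref{eq:par-alm-min-grad-L2-bound}), Lemma~\ref{lem:par-camp-space-embed} applied to each component of $\wDu$ with exponent $\si=\be$ gives $\wDu\in H^{\be,\be/2}(K)$ together with $\|\wDu\|_{H^{\be,\be/2}(K)}\le C(n,\al,K)\|u\|_{W^{1,0}_2(Q_1)}$, which is the claim.

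The main obstacle I anticipate is the off-diagonal propagation from $Q_1'$ to the interior: one must carefully track how the estimate at the nearby thin-space point $z_1$ and the interior almost-caloric estimate fit together across the three scale regimes $\rho\ge d$, $\rho<d\le r$, and $d>r$, making sure the exponent $n+2+2\be$ (rather than the better $n+2+\al/2$ available purely in the interior) is the one that survives uniformly and that all constants depend only on $n,\al,K$; this is the same bookkeeping as in Theorem~\ref{thm:par-alm-min-holder} but now carried out for $\wDu$ rather than $u$, and one must also confirm that the even extension $\wDu$ does not introduce a jump across $Q_1'$ that would spoil the Campanato estimate — this is exactly why Proposition~\ref{prop:par-alm-min-grad-Camp-est-thin} was stated for $\wDu$ and why the decomposition into even/odd parts of the Signorini replacement was performed there.
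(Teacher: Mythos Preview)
Your plan is essentially the paper's and uses the same ingredients (Proposition~\ref{prop:par-alm-min-grad-Camp-est-thin} on $K'$, the almost-caloric estimate away from $Q_1'$, the $d$-versus-$\rho$ case split, Lemma~\ref{lem:HL}, and the Campanato embedding). The one structural difference is that the paper first proves the two-scale growth inequality \eqref{eq:par-alm-min-grad-Camp-est} at \emph{every} $z_0\in K$ and only then applies Lemma~\ref{lem:HL}, whereas you apply Lemma~\ref{lem:HL} at thin-space points first and propagate the resulting one-scale Campanato bound; after that step there is no scale $r$ left, so your reference to ``splitting according to whether $d$ is comparable to $r$'' no longer fits your own framework.

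More importantly, in the case $\rho<d$ you cannot simply quote the $H^{\al/2,\al/4}$ regularity of $\D u$ from Theorem~\ref{thm:alm-cal-reg}: that theorem gives H\"older seminorms on compact subsets of $Q_1^+$, and these blow up as $z_0\to Q_1'$, so the constant in your bound $\int_{Q_\rho(z_0)}|\D u-\mean{\D u}_{z_0,\rho}|^2\le C\rho^{n+2+2\be}$ would depend on $d$. What you need instead (and what the paper uses, via \eqref{eq:alm-cal-Camp-est-improved}) is the pointwise growth inequality \eqref{eq:alm-cal-Camp-est}, valid with uniform constants whenever $Q_r(z_0)\subset Q_1^+$, combined with the \emph{uniform} gradient bound $\int_{Q_r(z_0)}|\D u|^2\le C\|u\|^2_{W^{1,0}_2(Q_1)}r^{n+2\si}$ that holds for all $z_0\in K$ from the proof of Theorem~\ref{thm:par-alm-min-holder}. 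With that correction your argument goes through and is then essentially identical to the paper's.
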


\begin{proof}
Let $K\Subset Q_1$ be a parabolic cylinder centered on $Q'_1$ and $r_0=r_0(n,\al,K)$ be as in Proposition~\ref{prop:par-alm-min-grad-Camp-est-thin}. We claim that for any $z_0\in K$ and $0<\rho<r<r_0$, 
\begin{multline}\label{eq:par-alm-min-grad-Camp-est}
    \int_{Q_\rho(z_0)}|\wDu-\mean{\wDu}_{z_0,\rho}|^2\le C(n,\al)\left(\frac\rho r\right)^{n+4}\int_{Q_r(z_0)}|\wDu-\mean{\wDu}_{z_0,r}|^2\\ +C(n,\al,K)\|u\|^2_{W^{1,0}_2(Q_1)}r^{n+2+2\be}.
\end{multline}
Indeed, when $z_0\in K\cap Q_1'$, \eqref{eq:par-alm-min-grad-Camp-est} simply follows from Proposition~\ref{prop:par-alm-min-grad-Camp-est-thin}. Thus we may assume that $z_0=(x_0,t_0)\in K\cap Q_1^+$. Moreover, if $\rho\ge r/4$, then
\begin{align*}
  \int_{Q_{\rho}(z_0)}|\wDu-\mean{\wDu}_{z_0, \rho}|^2 &\le \int_{Q_{\rho}(z_0)}|\wDu-\mean{\wDu}_{z_0, r}|^2\\&\le 4^{n+4}\left(\frac \rho r\right)^{n+4}\int_{Q_r(z_0)}|\wDu-\mean{\wDu}_{z_0, r}|^2,
\end{align*}
thus we may assume $\rho< r/4$ as well.  Let $d:=d_{\rm par}(z_0, Q'_1)=\dist(x_0,B'_1)>0$ and
choose $x_1\in \pa B_d(x_0)\cap B'_1$. The assumption
that $K$ is a cylinder centered on $Q'_1$ implies $z_1:=(x_1,t_0)\in K\cap Q_1'$.

\medskip\noindent \emph{Case 1.} If $\rho\ge d$, then from
$Q_{\rho}(z_0)\subset Q_{2\rho}(z_1)\subset Q_{r/2}(z_1)\subset
Q_r(z_0)$ and Proposition~\ref{prop:par-alm-min-grad-Camp-est-thin}, we deduce
\begin{align*}
  \int_{Q_{\rho}(z_0)}|\wDu-\mean{\wDu}_{z_0, \rho}|^2
  &\le \int_{Q_{2\rho}(z_1)}|\wDu-\mean{\wDu}_{z_1, 2\rho}|^2\\
  &\begin{multlined}\le C(n)\left(\frac \rho r\right)^{n+4}\int_{Q_{r/2}(z_1)}|\wDu-\mean{\wDu}_{z_1, r/2}|^2 \\+C(n, \al, K)\|u\|^2_{W^{1, 0}_2(Q_1)}r^{n+2+2\be}
  \end{multlined}\\
  &\begin{multlined}\le C(n)\left(\frac \rho
     r\right)^{n+4}\int_{Q_r(z_0)}|\wDu-\mean{\wDu}_{z_0,
     r}|^2\\+C(n, \al, K)\|u\|^2_{W^{1, 0}_2(Q_1)}r^{n+2+2\be},
 \end{multlined}
\end{align*}
which gives \eqref{eq:par-alm-min-grad-Camp-est} in this case.

\medskip\noindent \emph{Case 2.} Now we suppose $d>\rho$. If
$d>r$, then $Q_{r}(z_0)\subset Q_1^+$ and since $u$ is almost caloric
in $Q_1^+$, \eqref{eq:par-alm-min-grad-Camp-est} follows from \eqref{eq:alm-cal-Camp-est-improved} in the proof of Theorem~\ref{thm:alm-cal-reg}.
Thus we may assume $d\leq r$. Then $Q_d(z_0)\subset Q_1^{+}$, and
hence again by \eqref{eq:alm-cal-Camp-est-improved}, we have
\begin{multline*}
  \int_{Q_{\rho}(z_0)}|\wDu-\mean{\wDu}_{z_0, \rho}|^2 \le C(n,\al)\left(\frac \rho
    d\right)^{n+4}\int_{Q_d(z_0)}|\wDu-\mean{\wDu}_{z_0,d}|^2\\+C(n, \al, K)\|u\|^2_{W^{1,0}_2(Q_1)}d^{n+2+2\be}.
\end{multline*}
We need to consider further subcases.

\medskip\noindent\emph{Case 2.1.} If $r/4\le d$, then (since also
$d\leq r$)
$$\int_{Q_d(z_0)}|\wDu-\mean{\wDu}_{z_0, d}|^2 \le 4^{n+4}\left(\frac{d}{r}\right)^{n+4}\int_{Q_r(z_0)}|\wDu-\mean{\wDu}_{z_0, r}|^2$$
and combined with the previous inequality, we obtain
\eqref{eq:par-alm-min-grad-Camp-est} in this subcase.

\medskip\noindent \emph{Case 2.2.} If $d<r/4$, then we also have
\begin{align*}
  \int_{Q_d(z_0)}|\wDu-\mean{\wDu}_{z_0, d}|^2
  &\le \int_{Q_{2d}(z_1)}|\wDu-\mean{\wDu}_{z_1, 2d}|^2 \\
  &\begin{multlined}
    \le C(n)\left(\frac{d}r\right)^{n+4}\int_{Q_{r/2}(z_1)}|\wDu-\mean{\wDu}_{z_1, r/2}|^2\\
    +C(n, \al, K)\|u\|^2_{W^{1,0}_2(Q_1)}r^{n+2+2\be}
  \end{multlined}\\
  &\begin{multlined}\le C(n)\left(\frac{d}{r}\right)^{n+4}\int_{Q_{r}(z_0)}|\wDu-\mean{\wDu}_{z_0,
     r}|^2\\+C(n, \al, K)\|u\|^2_{W^{1, 0}_2(Q_1)}r^{n+2+2\be}.
 \end{multlined}
\end{align*}
Hence, the estimate \eqref{eq:par-alm-min-grad-Camp-est} has been
established in all possible cases.

Now we apply Lemma~\ref{lem:HL} to \eqref{eq:par-alm-min-grad-Camp-est} to obtain for $0<\rho<r<r_0$,
\begin{multline*}
  \int_{Q_{\rho}(z_0)}|\wDu-\mean{\wDu}_{z_0, \rho}|^2 \le C(n,\al)\biggl[
  \Bigl(\frac{\rho}{r}\Bigr)^{n+2+2\be}\int_{Q_{r}(z_0)}|\wDu-\mean{\wDu}_{z_0,
    r}|^2\\+C(n, \al, K)\|u\|_{W^{1, 0}_2(Q_1)}^2\rho^{n+2+2\be}\biggr].
\end{multline*}
Taking $r\nearrow r_0=r_0(n, \al, K)$ gives
\begin{align*}
  \int_{Q_{\rho}(z_0)}|\wDu-\mean{\wDu}_{z_0, \rho}|^2 &\le C(n, \al, K)\|u\|_{W^{1, 0}_2(Q_1)}^2\rho^{n+2+2\be}.
\end{align*}
Finally, by the Campanato space embedding (Lemma~\ref{lem:par-camp-space-embed}), we conclude that
$$\wDu\in H^{\be,\be/2}(K)$$ with
\begin{equation*}
  \|\wDu\|_{H^{\be,\be/2}(K)}\le C(n, \al, K)\|u\|_{W^{1, 0}_2(Q_1)}.\qedhere
\end{equation*}
\end{proof}

\begin{proof}[Proof of Theorem~\ref{mthm:I}] Parts (1) and (2) are contained in Theorems~\ref{thm:par-alm-min-holder} and \ref{thm:par-alm-min-grad-holder}, respectively.
\end{proof}

%%%%%%%%%%%%%%%%%%%%%%%%%%%%%%%%%%%%%%%%%

\section{Almost minimizers with variable coefficients}\label{sec:alm-min-var}

In this section we investigate almost minimizers for the parabolic $A$-Signorini problem. We extend the results for almost minimizers for the parabolic Signorini problem (in the case of $A(z)\equiv I$) to the framework of variable coefficients by following the idea in \cite{JeoPetSVG20}.

Note that for $z,z_0\in Q_1$, $\xi\in \R^n$ and $\xi\neq0$ $$
\left(1-C\|z-z_0\|^\al\right)\le \frac{\mean{A(z_0)\xi,\xi}}{\mean{A(z)\xi,\xi}}\le \left(1+C\|z-z_0\|^\al\right)
$$
for some constant $C>0$ depending on $\la$, $\Ld$ and $\|A\|_{H^{\al,\al/2}(Q_1)}$. Then we can rewrite \eqref{eq:par-var-alm-min-def} in the form \emph{with frozen coefficients}
\begin{equation}\label{eq:par-var-alm-min-def-frozen}
    \int_{F_r(z_0)}(1-C_1r^\al)\mean{A(z_0)\D U,\D U}+2\pa_tU(U-V)
    \le (1+C_1r^\al)\int_{F_r(z_0)}\mean{A(z_0)\D V,\D V}
\end{equation}
for a constant $C_1>0$ depending on $\la$, $\Lambda$ and $\|A\|_{H^{\al,\al/2}(Q_1)}$.

For the simplicity of the tracking of the constants, we take $M>0$ such that $$
\|A\|_{H^{\al,\al/2}(Q_1)},\la^{-1},\Ld,C_1\le M.
$$

\subsection{Coordinate transformations}
To use the results in previous sections for almost minimizers in the case of $A\equiv I$, we make use of coordinate transformations, introduced in \cite{JeoPetSVG20}, to straighten $A(z_0)$.

For this purpose, we first denote (for the notational simplicity) 
$$
\mfa_{z_0}=A^{1/2}(z_0),\quad z_0\in Q_1.
$$
Note that 
$$
\mean{\mfa_{z_0}\xi,\xi}=|\mfa_{z_0}\xi|^2,\quad \xi\in\R^n,
$$
and $\mfa_{z_0}$ is a symmetric positive definite $n\times n$ matrix with eigenvalues between $\la^{1/2}$ and $\Ld^{1/2}$, and the mapping $z_0\mapsto \mfa_{z_0}$ is $H^{\al,\al/2}$-continuous for $z_0\in Q_1$. 

We next define, for $z_0=(x_0,t_0)\in Q_1$, an affine transformation $T_{z_0}:\R^n\to\R^n$ by $$
T_{z_0}(x)=\mfa_{z_0}^{-1}(x-x_0)
$$
so that $E_r(z_0)=T_{z_0}^{-1}(B_r)$. We further define hyperplanes in $\R^n$ 
$$
\Pi:=\R^{n-1}\times\{0\}\subset\R^n \quad\text{and}\quad \Pi_{z_0}:=T_{z_0}(\Pi).
$$
In general, $\Pi_{z_0}$ is tilted with respect to $\Pi$, which is problematic when we try to use the results available for almost minimizers with $A\equiv I$. It can be rectified with the help of a family of orthogonal transformations $O_{z_0}$, $z_0\in Q_1$, as in Section 2 in \cite{JeoPetSVG20}. That is, we consider a rotation $O_{z_0}:\R^n\to\R^n$ such that 
$z_0\mapsto O_{z_0}$ is $H^{\al,\al/2}$-continuous and $O_{z_0}^{-1}(\Pi_{z_0})=\Pi$.

\noindent If we let
$$
\bar{\mfa}_{z_0}=\mfa_{z_0}O_{z_0}\quad\text{and}\quad \bar{T}_{z_0}=O_{z_0}^{-1}\circ T_{z_0},
$$
then we still have (since $O_{z_0}$ is a rotation) $$
\bar{T}_{z_0}(x)=\bar{\mfa}_{z_0}^{-1}(x-x_0)\quad\text{and}\quad E_r(z_0)=\bar{T}_{z_0}^{-1}(B_r).
$$
Now, we define the “deskewed”
version of $U$ at $z_0$ $$
u_{z_0}(x,t):=U(\bar{T}_{z_0}^{-1}(x),t+t_0)=U(\bar{\mfa}_{z_0}x+x_0,t+t_0).
$$
We will prove in Lemma~\ref{lem:par-var-alm-min-transf-property} that the transformed function $u_{z_0}$ satisfies the almost Signorini property at the origin, allowing us to use the results in the previous sections. Before stating and proving the lemma, we write the following (basic) change of variable formulas that will be used multiple times in this section: \begin{align}
\label{eq:cov1}
\int_{F_r(z_0)\times F_r(z_0)}|U(z)-U(w)|^2\,dzdw&=(\det\mfa_{z_0})^2\int_{Q_r\times Q_r}|u_{z_0}(z)-u_{z_0}(w)|^2\,dzdw,\\
\label{eq:cov2}
\int_{F_r(z_0)}\pa_tU(U-V)&=\det\mfa_{z_0}\int_{Q_r}\pa_tu_{z_0}(u_{z_0}-v_{z_0}),\\
\label{eq:cov3}\int_{F_r(z_0)}\mean{A(z_0)\D U,\D U}&=\det\mfa_{z_0}\int_{Q_r}|\D u_{z_0}|^2.
\end{align}

\begin{lemma}\label{lem:par-var-alm-min-transf-property}
Let $U$ be an almost minimizer of the parabolic $A$-Signorini problem in $Q_1$. Then for any $z_0\in Q'_1$ with $F_R(z_0)\subset Q_1$, $u_{z_0}$ satisfies the almost parabolic Signorini property at $0$ in $Q_R$ (with gauge function $\omega(r)=Mr^\al$).
\end{lemma}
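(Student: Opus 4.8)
\textbf{Proof plan for Lemma~\ref{lem:par-var-alm-min-transf-property}.}

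The plan is to verify the three ingredients of the almost parabolic Signorini property for $u_{z_0}$ at the origin, using the frozen-coefficient inequality \eqref{eq:par-var-alm-min-def-frozen} together with the change-of-variable formulas \eqref{eq:cov1}--\eqref{eq:cov3}. First I would fix $0<r<R$ and a competitor $v\in W^{1,0}_2(Q_r)$ with $v\ge 0$ on $Q'_r$ and $v-u_{z_0}\in L^2(-r^2,0;W^{1,2}_0(B_r))$. The idea is to push $v$ forward through the inverse transformation, i.e.\ set $V(x,t):=v(\bar T_{z_0}(x),t-t_0)$ on $F_r(z_0)$, so that $V-U\in L^2(t_0-r^2,t_0;W^{1,2}_0(E_r(z_0)))$ since $\bar T_{z_0}$ maps $E_r(z_0)$ onto $B_r$, and $V\ge 0$ on $F_r(z_0)\cap\cM_T$ because $\bar T_{z_0}$ maps the thin space into $\Pi=\{x_n=0\}$ (this is exactly why the rotations $O_{z_0}$ were introduced: $\bar{\mfa}_{z_0}^{-1}$ straightens $\Pi_{z_0}$ back to $\Pi$). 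The nonnegativity of $u_{z_0}$ on $Q'_r$ and $u_{z_0}(0)\ge 0$ follow directly from $U\ge 0$ on $\cM_T$ and $z_0\in Q'_1$.

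Next I would apply \eqref{eq:par-var-alm-min-def-frozen} to the pair $(U,V)$ on $F_r(z_0)$ and translate every term using \eqref{eq:cov1}--\eqref{eq:cov3}. The three integrals $\int_{F_r(z_0)}\mean{A(z_0)\D U,\D U}$, $\int_{F_r(z_0)}\pa_tU(U-V)$, and $\int_{F_r(z_0)}\mean{A(z_0)\D V,\D V}$ all carry the single common factor $\det\mfa_{z_0}$ (note $v_{z_0}=v$ under the deskewing, and that $(\bar T_{z_0}^{-1})$ has constant Jacobian $\det\bar{\mfa}_{z_0}=\det\mfa_{z_0}$ since $O_{z_0}$ is a rotation). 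Dividing through by $\det\mfa_{z_0}>0$ yields precisely
\begin{equation*}
\int_{Q_r}(1-C_1r^\al)|\D u_{z_0}|^2+2\pa_tu_{z_0}(u_{z_0}-v)\le(1+C_1r^\al)\int_{Q_r}|\D v|^2,
\end{equation*}
which is the almost parabolic Signorini inequality at $0$ with gauge $C_1r^\al\le Mr^\al$.

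I expect the main obstacle to be bookkeeping rather than anything deep: one must check carefully that the transformed competitor $V$ genuinely lies in the right class — that the time-translation $t\mapsto t+t_0$ together with the spatial map $\bar T_{z_0}$ preserves the $W^{1,0}_2$ regularity, the zero lateral boundary condition on $E_r(z_0)$, and the sign constraint on the thin space — and conversely that an arbitrary competitor $V$ for $U$ on $F_r(z_0)$ arises this way, so that no competitors are lost in the translation. One subtlety worth flagging is that $\D(v\circ\bar T_{z_0})=\bar{\mfa}_{z_0}^{-T}\D v\circ\bar T_{z_0}$, so that $\mean{A(z_0)\D V,\D V}=|\mfa_{z_0}\bar{\mfa}_{z_0}^{-T}\D v|^2=|O_{z_0}\D v|^2=|\D v|^2$ pointwise, using $A(z_0)=\mfa_{z_0}^2=\bar{\mfa}_{z_0}\bar{\mfa}_{z_0}^{T}$ and orthogonality of $O_{z_0}$; this is the computation underlying \eqref{eq:cov3} and should be invoked rather than redone. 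Once these identifications are in place the conclusion is immediate.
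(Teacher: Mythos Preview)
Your proposal is correct and follows essentially the same route as the paper: fix a competitor $v$ for $u_{z_0}$ on $Q_r$, define $V(x,t)=v(\bar T_{z_0}(x),t-t_0)$, verify it is an admissible competitor for $U$ on $F_r(z_0)$, apply the frozen-coefficient inequality \eqref{eq:par-var-alm-min-def-frozen}, and use \eqref{eq:cov2}--\eqref{eq:cov3} to cancel the common factor $\det\mfa_{z_0}$. The only superfluous remark is your concern that ``no competitors are lost in the translation'': the lemma only asks that the inequality hold for every competitor $v$ of $u_{z_0}$, so the direction competitor-of-$u_{z_0}\to$ competitor-of-$U$ is all that is needed.
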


\begin{proof}
For $0<r<R$, let $v_{z_0}\in W^{1,0}_2(Q_r)$ be such that $v_{z_0}\ge 0$ on $Q'_r$ and $v_{z_0}-u_{z_0}\in L^2(-r^2,0;W^{1,2}_0(B_r))$. Then $V(x,t):=v_{z_0}(\bar{T}_{z_0}(x),t-t_0)$ satisfies $V\ge 0$ on $F'_r(z_0)$ and $V-U\in L^2(t_0-r^2,t_0;W^{1,2}_0(E_r(z_0)))$, i.e., $V$ is a valid competitor for the almost minimizer $U$. Thus \begin{align*}
    \hspace{2em}&\hspace{-2em}\int_{Q_r}(1-Mr^\al)|\D u_{z_0}|^2+2\pa_tu_{z_0}(u_{z_0}-v_{z_0})\\
    &=\det\mfa_{z_0}^{-1}\int_{F_r(z_0)}(1-Mr^\al)\mean{A(z_0)\D U,\D U}+2\pa_tU(U-V)\\
    &\le (1+Mr^\al)\det\mfa_{z_0}^{-1}\int_{F_r(z_0)}\mean{A(z_0)\D V,\D V}=(1+Mr^\al)\int_{Q_r}|\D v_{z_0}|^2.\qedhere
\end{align*}
\end{proof}

\subsection{Almost $A$-caloric functions}

\begin{definition}
We say that $U\in W^{1,1}_2(Q_1)$ is an \emph{almost $A$-caloric function} in $Q_1$ if \begin{align*}
\int_{F_r(z_0)}(1-r^\al)\mean{A\D U,\D U}+2\pa_tU(U-V)\le (1+r^\al)\int_{F_r(z_0)}\mean{A\D V,\D V},
\end{align*}
whenever $F_r(z_0)\Subset Q_1$ and $V\in U+L^2(t_0-r^2,t_0;W^{1,2}_0(E_r(z_0)))$.
\end{definition}

Note that as in the parabolic $A$-Signorini problem, the above almost minimizing property can be written in the form with frozen coefficients $$
\int_{F_r(z_0)}(1-Mr^\al)\mean{A(z_0)\D U,\D U}+2\pa_tU(U-V)\le (1+Mr^\al)\int_{F_r(z_0)}\mean{A(z_0)\D V,\D V}.
$$

\begin{lemma}
If $U$ is an almost $A$-caloric function in $Q_1$ and $z_0\in Q_1$ with $F_R(z_0)\subset Q_1$, then $u_{z_0}$ satisfies almost caloric property at $0$ in $Q_R$.
\end{lemma}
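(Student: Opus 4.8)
The plan is to mimic the proof of Lemma~\ref{lem:par-var-alm-min-transf-property} almost verbatim, since the only structural difference is the absence of the obstacle constraint. Fix $z_0 \in Q_1$ with $F_R(z_0) \subset Q_1$, and for $0 < r < R$ take an arbitrary competitor $v_{z_0} \in W^{1,0}_2(Q_r)$ with $v_{z_0} - u_{z_0} \in L^2(-r^2, 0; W^{1,2}_0(B_r))$. The key observation is that the pullback $V(x,t) := v_{z_0}(\bar T_{z_0}(x), t - t_0)$ then satisfies $V - U \in L^2(t_0 - r^2, t_0; W^{1,2}_0(E_r(z_0)))$, because $\bar T_{z_0}$ maps $E_r(z_0)$ onto $B_r$ and a linear change of variables preserves membership in $W^{1,2}_0$. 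Hence $V$ is an admissible competitor for $U$ in $F_r(z_0)$ (now with \emph{no} constraint on $F'_r(z_0)$ to check), so the frozen-coefficient form of the almost $A$-caloric inequality applies.

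Next I would apply the change-of-variable identities \eqref{eq:cov2} and \eqref{eq:cov3} (together with the analogous identity for $\int_{F_r(z_0)} \mean{A(z_0)\D V, \D V} = \det\mfa_{z_0} \int_{Q_r} |\D v_{z_0}|^2$) to transfer the frozen-coefficient inequality from $F_r(z_0)$ to $Q_r$. Dividing through by $\det\mfa_{z_0}$ yields exactly
\[
\int_{Q_r}(1 - Mr^\al)|\D u_{z_0}|^2 + 2\pa_t u_{z_0}(u_{z_0} - v_{z_0}) \le (1 + Mr^\al)\int_{Q_r}|\D v_{z_0}|^2,
\]
which is the almost caloric property at $0$ in $Q_R$ (with gauge $\om(r) = Mr^\al$). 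One should also note in passing that $u_{z_0} \in W^{1,1}_2(Q_R)$ follows from $U \in W^{1,1}_2(Q_1)$ under the linear change of variables, so the regularity requirement in the definition of the almost caloric property is met.

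There is essentially no obstacle here; the lemma is a direct corollary of the computation already carried out in Lemma~\ref{lem:par-var-alm-min-transf-property}, with the obstacle-related clauses ($v_{z_0} \ge 0$ on $Q'_r$, $V \ge 0$ on $F'_r(z_0)$) simply deleted. If anything, the only point requiring a word of care is that for the almost \emph{caloric} case we do not need the rotation $O_{z_0}$ to straighten the thin space — any linear deskewing $T_{z_0}$ straightening $A(z_0)$ would do — but since $u_{z_0}$ is defined via $\bar T_{z_0}$ throughout the section, I would just use $\bar T_{z_0}$ as given and remark that the argument is identical.
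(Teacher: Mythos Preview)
Your proposal is correct and matches the paper's approach exactly: the paper's proof consists of the single sentence ``The proof is similar to that of Lemma~\ref{lem:par-var-alm-min-transf-property},'' and you have spelled out precisely that argument with the obstacle clauses deleted. Your side remarks (that $O_{z_0}$ is not actually needed here and that $u_{z_0}\in W^{1,1}_2(Q_R)$ follows from the linear change of variables) are correct and do no harm.
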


\begin{proof}
The proof is similar to that of Lemma~\ref{lem:par-var-alm-min-transf-property}.
\end{proof}

Recall $$
\vp_{z_0}(r,U)=\int_{Q_r(z_0)}r^{n+4}|\D U|^2+\int_{Q_r(z_0)\times Q_r(z_0)}|U(z)-U(w)|^2\,dzdw.
$$
Combininig the preceding lemma and Proposition~\ref{prop:alm-cal-Mor-Camp-est}, we can obtain the following estimates for almost $A$-caloric functions.

\begin{proposition}
\label{prop:var-alm-cal-Mor-Camp-est}
Let $U$ be an almost $A$-caloric function in $Q_1$. Then for any $Q_R(z_0)\Subset Q_1$ and $0<\rho<R$,
\begin{align}
    \label{eq:var-alm-cal-Mor-est} &\vp_{z_0}(\rho,U)\le C\left[\left(\frac\rho R\right)^{2n+6}+R^\al\right]\vp_{z_0}(R,U),\\
    \label{eq:var-alm-cal-Camp-est}&\begin{multlined}[t]\int_{Q_\rho(z_0)}|\D U-\mean{\D U}_{z_0,\rho}|^2\le C\left(\frac\rho R\right)^{n+4}\int_{Q_R(z_0)}|\D U-\mean{\D U}_{z_0,R}|^2\\+CR^\al\int_{Q_R(z_0)}|\D U|^2,
\end{multlined}\end{align}
with $C=C(n,\al,M)$.
\end{proposition}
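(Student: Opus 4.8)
The plan is to transfer the growth estimates for almost caloric functions (Proposition~\ref{prop:alm-cal-Mor-Camp-est}) to the variable-coefficient setting via the deskewing change of variables, exactly as the lemma preceding this proposition suggests. First I would fix $Q_R(z_0)\Subset Q_1$ and recall that the lemma guarantees the transformed function $u_{z_0}(x,t)=U(\bar{\mfa}_{z_0}x+x_0,t+t_0)$ satisfies the almost caloric property at the origin in some cylinder $Q_{R'}$, where $R'$ is comparable to $R$ with constants depending only on the ellipticity bounds (hence on $M$); more precisely, by the inclusions $B_{\la^{1/2}r}(x_0)\subset E_r(z_0)\subset B_{\Ld^{1/2}r}(x_0)$ we may work on cylinders $Q_{c_1 R}\subset \bar T_{z_0}(Q_R(z_0))\subset Q_{c_2 R}$ for $c_1=\la^{1/2}$, $c_2=\Ld^{1/2}$. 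Then Proposition~\ref{prop:alm-cal-Mor-Camp-est} applied to $u_{z_0}$ gives both \eqref{eq:alm-cal-Mor-est} and \eqref{eq:alm-cal-Camp-est} for $u_{z_0}$ on these cylinders, with constant $C(n,\al)$ (the gauge function being $Mr^\al$ changes this only to $C(n,\al,M)$).

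Next I would convert these estimates back to $U$. For \eqref{eq:var-alm-cal-Mor-est} this is essentially bookkeeping with the change-of-variables formulas \eqref{eq:cov1} and \eqref{eq:cov3}: the double-integral term of $\vp_{z_0}(r,U)$ transforms with a Jacobian factor $(\det\mfa_{z_0})^2$ and the Dirichlet term with $\det\mfa_{z_0}$, and since the ratio $\vp_{z_0}(\rho,\cdot)/\vp_{z_0}(R,\cdot)$ is what we estimate, these Jacobians are harmless; the only genuine point is that the radii get distorted by factors in $[\la^{1/2},\Ld^{1/2}]$, which is absorbed because the estimate \eqref{eq:alm-cal-Mor-est} holds for \emph{all} ratios $\rho/r$ (one pays an extra $(c_2/c_1)^{2n+6}$ factor and replaces $R^\al$ by $(c_2 R)^\al$). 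For \eqref{eq:var-alm-cal-Camp-est}, the subtlety is that $\D U$ at a point is not simply a rescaling of $\D u_{z_0}$ at the corresponding point: by the chain rule $\D u_{z_0}(x,t)=\bar{\mfa}_{z_0}^{\,T}\,(\D U)(\bar{\mfa}_{z_0}x+x_0,t+t_0)$, and $\bar{\mfa}_{z_0}=\mfa_{z_0}O_{z_0}$ is an invertible matrix with operator norm and inverse norm controlled by $\la^{-1/2}$ and $\Ld^{1/2}$. So $|\D U-\mean{\D U}_{z_0,\rho}|$ and $|\D u_{z_0}-\mean{\D u_{z_0}}_{\rho}|$ (the latter in the deskewed variables) are comparable pointwise up to constants depending on $M$, because $\bar{\mfa}_{z_0}^{-T}$ commutes with taking averages. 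Combining this pointwise comparison, the Jacobian factor $\det\mfa_{z_0}$ from the volume element, and the radius distortion, \eqref{eq:alm-cal-Camp-est} for $u_{z_0}$ yields \eqref{eq:var-alm-cal-Camp-est} for $U$ with $C=C(n,\al,M)$.

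The main obstacle, such as it is, is simply the careful tracking of how the affine map $\bar T_{z_0}$ distorts cylinders into cylinders of comparable (but not equal) radius, and making sure the constants depend only on $n$, $\al$, and $M$ and not on $z_0$ — this is where one uses that $z_0\mapsto \bar{\mfa}_{z_0}$ is $H^{\al,\al/2}$-continuous on $Q_1$ with norm bounded by $M$, so the eigenvalue bounds $\la^{1/2}\le \|\bar{\mfa}_{z_0}^{\pm1}\|\le \Ld^{1/2}$ are uniform. There is no essentially new analytic content beyond Proposition~\ref{prop:alm-cal-Mor-Camp-est} and the transformation lemma; the proof is therefore short, amounting to "apply the preceding lemma and Proposition~\ref{prop:alm-cal-Mor-Camp-est}, then undo the change of variables using \eqref{eq:cov1}, \eqref{eq:cov3} and the chain rule, absorbing all ellipticity-dependent factors into $C(n,\al,M)$."
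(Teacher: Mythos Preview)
Your proposal is correct and follows essentially the same route as the paper: apply the preceding lemma so that $u_{z_0}$ satisfies the almost caloric property at the origin on a cylinder of radius comparable to $R$ (the paper takes $r=\Ld^{-1/2}R$), invoke Proposition~\ref{prop:alm-cal-Mor-Camp-est} there, and then undo the change of variables using \eqref{eq:cov1}, \eqref{eq:cov3} and the chain rule, absorbing the ellipticity-dependent radius distortion into $C(n,\al,M)$. The only cosmetic difference is that the paper passes through the intermediate quantity $\vp_{z_0}^A(r,U)$ defined on the elliptic cylinders $F_r(z_0)$ and then compares it to $\vp_{z_0}(r,U)$ via the inclusions $Q_\rho(z_0)\subset F_{\la^{-1/2}\rho}(z_0)\subset F_{\Ld^{-1/2}R}(z_0)\subset Q_R(z_0)$, treating the residual range $(\la/\Ld)^{1/2}R\le\rho\le R$ by monotonicity; your sketch collapses this into a single step, which is fine.
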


\begin{proof}
Since $u_{z_0}$ satisfies almost caloric property at $0$ in $Q_r$ with $r:=\Ld^{-1/2}R$, applying Proposition~\ref{prop:alm-cal-Mor-Camp-est} gives that for $0<\rho<r$ \begin{align*}
    &\vp_0(\rho,u_{z_0})\le C(n,\al,M)\left[\left(\frac\rho r\right)^{2n+6}+r^\al\right]\vp_0(r,u_{z_0}),\\
    &\begin{multlined}
      \int_{Q_\rho}|\D u_{z_0}-\mean{\D u_{z_0}}_\rho|^2\le C(n,\al,M)\left(\frac\rho r\right)^{n+4}\int_{Q_r}|\D u_{z_0}-\mean{\D u_{z_0}}_r|^2\\ +C(n,M)r^\al\int_{Q_r}|\D u_{z_0}|^2.
    \end{multlined}
\end{align*}
To rewrite these estimates in terms of $U$, we introduce the modification of the quantity $\vp_{z_0}(r,U)$: $$
\vp_{z_0}^A(r,U):=\int_{F_r(z_0)}r^{n+4}\mean{A(z_0)\D U,\D U}+\int_{F_r(z_0)\times F_r(z_0)}|U(z)-U(w)|^2\,dzdw.
$$
By applying the change of variable formulas \eqref{eq:cov1}--\eqref{eq:cov3} and using that $\la^{n/2}\le \det \mfa_{z_0}\le \Ld^{n/2}$, we deduce \begin{align}
    \label{eq:var-alm-cal-Mor-est-A}&\vp_{z_0}^A(\rho,U)\le C(n,\al,M)\left[\left(\frac\rho r\right)^{2n+6}+r^\al\right]\vp_{z_0}^A(r,U),\\
    &\label{eq:var-alm-cal-Camp-est-A}\begin{multlined}[t]
      \int_{F_\rho(z_0)}|\bar{\mfa}_{z_0}\D U-\mean{\bar{\mfa}_{z_0}\D U}_{F_\rho(z_0)}|^2\\
      \qquad\qquad \le C(n,\al,M)\left(\frac\rho r\right)^{n+4}\int_{F_r(z_0)}|\bar{\mfa}_{z_0}\D U-\mean{\bar{\mfa}_{z_0}\D U}_{F_r(z_0)}|^2\\
      +C(n,M)r^\al\int_{F_r(z_0)}\mean{A(z_0)\D U,\D U},
    \end{multlined}
\end{align}
for $0<\rho<r$. To show that \eqref{eq:var-alm-cal-Mor-est-A}--\eqref{eq:var-alm-cal-Camp-est-A} imply \eqref{eq:var-alm-cal-Mor-est}--\eqref{eq:var-alm-cal-Camp-est}, we first consider the case $$
0<\rho<\left(\la/\Ld\right)^{1/2}R.
$$
Applying \eqref{eq:var-alm-cal-Mor-est-A}--\eqref{eq:var-alm-cal-Camp-est-A} with $\la^{-1/2}\rho$ and $\Ld^{-1/2}R$ in place of $\rho$ and $r$, and using the inclusions $$
Q_\rho(z_0)\subset F_{\la^{-1/2}\rho}(z_0)\subset F_{\Ld^{-1/2}R}(z_0)\subset Q_R(z_0)
$$
and the ellipticity of $A(z_0)$, we obtain \eqref{eq:var-alm-cal-Mor-est}--\eqref{eq:var-alm-cal-Camp-est} in this case.

On the other hand, when $\left(\la/\Ld\right)^{1/2}R\le \rho\le R$, we simply have \begin{align*}
    \vp_{z_0}(\rho,U)&\le \left(\frac\Ld\la\right)^{n+3}\left(\frac\rho R\right)^{2n+6}\vp_{z_0}(R,U),\\
    \int_{Q_\rho(z_0)}|\D U-\mean{\D U}_{z_0,\rho}|^2&\le \left(\frac\Ld\la\right)^{\frac{n+4}2}\left(\frac\rho R\right)^{n+4}\int_{Q_R(z_0)}|\D U-\mean{\D U}_{z_0,R}|^2.\qedhere
\end{align*}
\end{proof}

\begin{theorem}\label{thm:alm-A-cal-reg}
Let $U$ be an almost $A$-caloric function in $Q_1$. Then 
\begin{enumerate}
    \item $U\in H^{\si,\si/2}(Q_1)$ for all $\si\in (0,1)$;
    \item $\D U\in H^{\al/2,\al/4}(Q_1)$.
\end{enumerate}
\end{theorem}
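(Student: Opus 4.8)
The plan is to follow the proof of Theorem~\ref{thm:alm-cal-reg} essentially word for word, with Proposition~\ref{prop:var-alm-cal-Mor-Camp-est} playing the role of Proposition~\ref{prop:alm-cal-Mor-Camp-est}. Indeed, the two growth inequalities \eqref{eq:var-alm-cal-Mor-est} and \eqref{eq:var-alm-cal-Camp-est} have exactly the same structure as \eqref{eq:alm-cal-Mor-est} and \eqref{eq:alm-cal-Camp-est}; the only difference is that the constants now also depend on $M$, i.e.\ on $\la$, $\Ld$ and $\|A\|_{H^{\al,\al/2}(Q_1)}$, which is immaterial for the iteration. So no genuinely new idea is required.

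For part (1) I would fix $K\Subset Q_1$, a point $z_0\in K$, an exponent $0<\si<1$, and set $r_0:=\tfrac12 d_{\rm par}(K,\pa Q_1)$, possibly shrunk depending on $n,\al,\si,M$ so that the coefficient $C(n,\al,M)r_0^\al$ appearing in \eqref{eq:var-alm-cal-Mor-est} lies below the threshold $\e$ of Lemma~\ref{lem:HL}. Applying Lemma~\ref{lem:HL} to \eqref{eq:var-alm-cal-Mor-est} with $\g=2n+6>\be=2n+4+2\si$ and $b=0$ then yields $\vp_{z_0}(\rho,U)\le C(\rho/r)^{2n+4+2\si}\vp_{z_0}(r,U)$ for $0<\rho<r<r_0$; letting $r\nearrow r_0$ and bounding $\vp_{z_0}(r_0,U)$ by $C(n,K)\|U\|_{W^{1,0}_2(Q_1)}^2$ gives the growth bound $\vp_{z_0}(\rho,U)\le C\|U\|_{W^{1,0}_2(Q_1)}^2\rho^{2n+4+2\si}$. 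As in Theorem~\ref{thm:alm-cal-reg}, a Cauchy--Schwarz estimate converts the double-integral part of $\vp_{z_0}$ into $\int_{Q_\rho(z_0)}|U-\mean{U}_{z_0,\rho}|^2\le C\|U\|_{W^{1,0}_2(Q_1)}^2\rho^{n+2+2\si}$, and the parabolic Campanato embedding (Lemma~\ref{lem:par-camp-space-embed}) delivers $U\in H^{\si,\si/2}$ with the expected estimate on compact subsets.

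For part (2) I would run a two-step bootstrap, again mirroring Theorem~\ref{thm:alm-cal-reg}. The growth bound above, read on a slightly enlarged compact set and with the choice $\si=1-\al/4$, gives $\int_{Q_\rho(z_0)}|\D U|^2\le C\|U\|_{W^{1,0}_2(Q_1)}^2\rho^{n+2-\al/2}$. Inserting this into the last term of \eqref{eq:var-alm-cal-Camp-est}, applying Lemma~\ref{lem:HL} (now with a genuine inhomogeneous term $b\,r^{n+2+\al/2}$, $\be=n+2+\al/2<\g=n+4$) and then Lemma~\ref{lem:par-camp-space-embed} shows $\D U\in H^{\al/4,\al/8}_{\loc}(Q_1)$. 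In particular $\D U$ is now locally bounded, so the $L^2$-growth of $\D U$ improves to $\int_{Q_r(z_0)}|\D U|^2\le C\|U\|_{W^{1,0}_2(Q_1)}^2 r^{n+2}$; feeding this sharper bound back into \eqref{eq:var-alm-cal-Camp-est} produces the Campanato exponent $n+2+\al$, and a final pass through Lemma~\ref{lem:HL} and Lemma~\ref{lem:par-camp-space-embed} yields $\D U\in H^{\al/2,\al/4}(Q_1)$.

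Since every ingredient is already available, I do not expect a real obstacle. The only points needing a little care are the bookkeeping that transfers the pointwise estimates from the auxiliary compact sets back to $K$ (using $Q_{r_0}(z_0)\subset\widetilde K$ for $z_0\in K$, exactly as in Theorem~\ref{thm:alm-cal-reg}) and the two-stage bootstrap in part (2); the extra dependence of the constants on $M$ never interacts with either iteration.
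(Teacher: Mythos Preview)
Your proposal is correct and matches the paper's own proof exactly: the paper simply states that with Proposition~\ref{prop:var-alm-cal-Mor-Camp-est} in hand, the proof of Theorem~\ref{thm:alm-A-cal-reg} is similar to that of Theorem~\ref{thm:alm-cal-reg}, and your write-up faithfully reproduces that argument, including the two-step bootstrap for part~(2). The only additional care you take---shrinking $r_0$ so that $C(n,\al,M)r_0^\al$ falls below the threshold $\e$ in Lemma~\ref{lem:HL}---is a legitimate clarification that the paper leaves implicit.
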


\begin{proof}
With Proposition~\ref{prop:var-alm-cal-Mor-Camp-est} at hand, the proof of Theorem~\ref{thm:alm-A-cal-reg}
is similar to that of Theorem~\ref{thm:alm-cal-reg}. 
\end{proof}

%%%%%%%%%%%%%%%%%%%%%%%%%%%%%%%%%%%%%%%%%%%%%

\subsection{Regularity of almost minimizers with variable coefficients}

In this subsection, we study almost minimizers for the parabolic $A$-Signorini problem, and establish Theorem~\ref{mthm:II}.

\begin{proposition}
Let $U$ be an almost minimizer for the parabolic $A$-Signorini problem in $Q_1$, and $Q_R(z_0)\Subset Q_1$ with $z_0\in Q'_1$. Then, there is $C=C(n,\al,M)>1$ such that \begin{align}
    \label{eq:var-par-alm-min-Mor-est-thin}
    \vp_{z_0}(\rho, U)\le C\left[\left(\frac\rho R\right)^{2n+6}+R^\al\right]\vp_{z_0}(R,U),\quad 0<\rho<R.
\end{align}
\end{proposition}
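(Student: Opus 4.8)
The plan is to follow the proof of Proposition~\ref{prop:var-alm-cal-Mor-Camp-est} almost verbatim, replacing the almost caloric ingredients by their thin obstacle analogues. Put $r:=\Ld^{-1/2}R$, so that $F_r(z_0)\subset Q_{\Ld^{1/2}r}(z_0)=Q_R(z_0)\Subset Q_1$, and recall from Lemma~\ref{lem:par-var-alm-min-transf-property} that the deskewed function $u_{z_0}$ then satisfies the almost parabolic Signorini property at the origin in $Q_r$, with gauge $Mr^\al$ rather than $r^\al$. Inspecting the proof of Proposition~\ref{prop:par-alm-min-Mor-est-thin} one checks that it goes through unchanged when $r^\al$ is enlarged to $Mr^\al$, the only effect being that the resulting constant now also depends on $M$; hence
$$
\vp_0(\rho',u_{z_0})\le C(n,\al,M)\Bigl[\bigl(\tfrac{\rho'}{r}\bigr)^{2n+6}+r^\al\Bigr]\vp_0(r,u_{z_0}),\qquad 0<\rho'<r.
$$

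Next I would transfer this estimate back to $U$. As in the proof of Proposition~\ref{prop:var-alm-cal-Mor-Camp-est}, introduce
$$
\vp_{z_0}^A(s,U):=\int_{F_s(z_0)}s^{n+4}\mean{A(z_0)\D U,\D U}+\int_{F_s(z_0)\times F_s(z_0)}|U(z)-U(w)|^2\,dzdw.
$$
The change of variable formulas \eqref{eq:cov1} and \eqref{eq:cov3} give $\int_{Q_s}|\D u_{z_0}|^2=(\det\mfa_{z_0})^{-1}\int_{F_s(z_0)}\mean{A(z_0)\D U,\D U}$ and $\int_{Q_s\times Q_s}|u_{z_0}(z)-u_{z_0}(w)|^2\,dzdw=(\det\mfa_{z_0})^{-2}\int_{F_s(z_0)\times F_s(z_0)}|U(z)-U(w)|^2\,dzdw$; since $\la^{n/2}\le\det\mfa_{z_0}\le\Ld^{n/2}$, it follows that $\vp_0(s,u_{z_0})$ and $\vp_{z_0}^A(s,U)$ are comparable up to constants depending only on $M$ (the two integrals carry the distinct factors $(\det\mfa_{z_0})^{-1}$ and $(\det\mfa_{z_0})^{-2}$, both controlled by $M$). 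Plugging this into the displayed inequality yields
$$
\vp_{z_0}^A(\rho',U)\le C(n,\al,M)\Bigl[\bigl(\tfrac{\rho'}{r}\bigr)^{2n+6}+r^\al\Bigr]\vp_{z_0}^A(r,U),\qquad 0<\rho'<r.
$$

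Finally I would deduce \eqref{eq:var-par-alm-min-Mor-est-thin} from this by the ellipticity of $A(z_0)$ and the inclusions $Q_{\la^{1/2}s}(z_0)\subset F_s(z_0)\subset Q_{\Ld^{1/2}s}(z_0)$, distinguishing two cases exactly as in Proposition~\ref{prop:var-alm-cal-Mor-Camp-est}. If $0<\rho<(\la/\Ld)^{1/2}R$, apply the last inequality with $\rho'=\la^{-1/2}\rho$ and $r=\Ld^{-1/2}R$ (legitimate since then $\la^{-1/2}\rho<\Ld^{-1/2}R$), then use $Q_\rho(z_0)\subset F_{\la^{-1/2}\rho}(z_0)$, $F_{\Ld^{-1/2}R}(z_0)\subset Q_R(z_0)$ and $\la|\D U|^2\le\mean{A(z_0)\D U,\D U}\le\Ld|\D U|^2$; the ratio $\la^{-1/2}\rho/(\Ld^{-1/2}R)$ contributes only a harmless factor $(\Ld/\la)^{n+3}$, and one arrives at \eqref{eq:var-par-alm-min-Mor-est-thin}. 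If instead $(\la/\Ld)^{1/2}R\le\rho<R$, the estimate is immediate from the monotonicity of $s\mapsto\vp_{z_0}(s,U)$ together with $\rho/R\ge(\la/\Ld)^{1/2}$:
$$
\vp_{z_0}(\rho,U)\le\vp_{z_0}(R,U)\le\Bigl(\tfrac{\Ld}{\la}\Bigr)^{n+3}\Bigl(\tfrac{\rho}{R}\Bigr)^{2n+6}\vp_{z_0}(R,U).
$$
There is no genuine obstacle here: the argument is a transcription of the almost $A$-caloric case, and the only points that deserve a remark are the extension of Proposition~\ref{prop:par-alm-min-Mor-est-thin} to the gauge $Mr^\al$, and the careful bookkeeping of the $\la,\Ld$-powers arising from the two different scalings under the change of variables and from comparing elliptic cylinders and weighted gradients with round cylinders and plain gradients.
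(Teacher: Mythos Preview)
Your proof is correct and follows exactly the same approach as the paper's own proof: apply Lemma~\ref{lem:par-var-alm-min-transf-property} to reduce to $u_{z_0}$, invoke Proposition~\ref{prop:par-alm-min-Mor-est-thin} (with the harmless gauge change $r^\al\mapsto Mr^\al$), and then transform back via $\vp_{z_0}^A$ and the two-case argument from Proposition~\ref{prop:var-alm-cal-Mor-Camp-est}. The paper merely compresses your last three paragraphs into the single phrase ``transforming back from $u_{z_0}$ to $U$ as we did in Proposition~\ref{prop:var-alm-cal-Mor-Camp-est}'', so you have in fact written out the details the paper leaves implicit.
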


\begin{proof}
Since $u_{z_0}$ satisfies the almost parabolic Signorini property at $0$ in $Q_r$ for $r=\Ld^{-1/2}R$, we have by Proposition~\ref{prop:par-alm-min-Mor-est-thin} $$
\vp_0(\rho, u_{z_0})\le C(n,\al,M)\left[\left(\frac\rho r\right)^{2n+6}+r^\al\right]\vp_0(r,u_{z_0}),\quad 0<\rho<r.
$$
By transforming back from $u_{z_0}$ to $U$ as we did in Proposition~\ref{prop:var-alm-cal-Mor-Camp-est}, we obtain \eqref{eq:var-par-alm-min-Mor-est-thin}.
\end{proof}

\begin{theorem}\label{thm:var-par-alm-min-holder}
Let $U$ be an almost minimizer of the parabolic $A$-Signorini problem in $Q_1$. Then $u\in H^{\si,\si/2}(Q_1)$ for every $\si\in (0,1)$. Moreover, for any $K\Subset Q_1$, $$\|U\|_{H^{\si,\si/2}(K)}\le C(K,n,\al,\si,M)\|U\|_{W^{1,0}_2(Q_1)}.
$$
\end{theorem}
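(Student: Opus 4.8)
The plan is to deduce Theorem~\ref{thm:var-par-alm-min-holder} from the already-established estimate \eqref{eq:var-par-alm-min-Mor-est-thin} for the functional $\vp_{z_0}$, exactly mirroring the passage from Proposition~\ref{prop:par-alm-min-Mor-est-thin} to Theorem~\ref{thm:par-alm-min-holder} in the constant-coefficient case. The key point is that \eqref{eq:var-par-alm-min-Mor-est-thin} is only available for centers $z_0$ on the thin space $Q'_1$, so we must propagate it to interior centers $z_0\in Q_1^+$ by a covering/barrier argument using the fact that an almost minimizer of the $A$-Signorini problem restricted to $Q_1^+$ is an almost $A$-caloric function there; the growth estimate \eqref{eq:var-alm-cal-Mor-est} of Proposition~\ref{prop:var-alm-cal-Mor-Camp-est} then supplies the needed off-thin-space bound.

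Concretely, first I would reduce to the case where $K\Subset Q_1$ is a parabolic cylinder centered on $Q'_1$, and fix $0<\si<1$. Then I would establish the claim that for every $z_0\in K$ and $0<\rho<r<r_0=r_0(n,\al,K,\si)$,
\begin{align*}
\vp_{z_0}(\rho,U)\le C(n,\al,M)\left[\left(\frac\rho r\right)^{2n+6}+r^\al\right]\vp_{z_0}(r,U),
\end{align*}
splitting into the case $z_0\in K\cap Q'_1$ (immediate from \eqref{eq:var-par-alm-min-Mor-est-thin}) and the case $z_0=(x_0,t_0)\in K\cap Q_1^+$, where one sets $d:=\dist(x_0,B_1')>0$, picks $x_1\in\pa B_d(x_0)\cap B_1'$, $z_1:=(x_1,t_0)\in K\cap Q'_1$, and runs the same Case~1/Case~2.1/Case~2.2 analysis as in the proof of Theorem~\ref{thm:par-alm-min-holder}: when $\rho\ge d$ use the inclusions $Q_\rho(z_0)\subset Q_{2\rho}(z_1)\subset Q_{r/2}(z_1)\subset Q_r(z_0)$ together with \eqref{eq:var-par-alm-min-Mor-est-thin} at $z_1$; when $\rho<d$ use \eqref{eq:var-alm-cal-Mor-est} on $Q_d(z_0)\subset Q_1^+$ and then either monotonicity (if $r/4\le d$) or again \eqref{eq:var-par-alm-min-Mor-est-thin} at $z_1$ (if $d<r/4$). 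Monotonicity of $s\mapsto\vp_{z_0}(s,U)$ handles the trivial range $\rho\ge r/4$.

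Once the claim holds, I would apply Lemma~\ref{lem:HL} to upgrade it to $\vp_{z_0}(\rho,U)\le C(n,\al,M,\si)(\rho/r)^{2n+4+2\si}\vp_{z_0}(r,U)$, let $r\nearrow r_0$, and bound $\vp_{z_0}(r_0,U)\le C(n,K,\al,\si,M)\|U\|_{W^{1,0}_2(Q_1)}^2$ to conclude
\begin{align*}
\int_{Q_\rho(z_0)\times Q_\rho(z_0)}|U(z)-U(w)|^2\,dzdw+\rho^{n+4}\int_{Q_\rho(z_0)}|\D U|^2\le C(n,K,\al,\si,M)\|U\|_{W^{1,0}_2(Q_1)}^2\rho^{2n+4+2\si}.
\end{align*}
From here, the elementary bound $|\mean{U}_{z_0,\rho}-U(w)|^2\le \frac{C(n)}{\rho^{n+2}}\int_{Q_\rho(z_0)}|U(z)-U(w)|^2\,dz$ gives $\int_{Q_\rho(z_0)}|U-\mean{U}_{z_0,\rho}|^2\le C\|U\|_{W^{1,0}_2(Q_1)}^2\rho^{n+2+2\si}$, and the parabolic Campanato embedding Lemma~\ref{lem:par-camp-space-embed} yields $U\in H^{\si,\si/2}(K)$ with the asserted estimate. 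The main obstacle is the off-thin-space propagation argument in the interior case: one must carefully track which of the two growth estimates (the thin-space one \eqref{eq:var-par-alm-min-Mor-est-thin} versus the almost $A$-caloric one \eqref{eq:var-alm-cal-Mor-est}) applies in each geometric subcase and verify that the constants and exponents combine correctly — but since this is a verbatim transcription of the constant-coefficient argument with $U$ in place of $u$ and $\vp_{z_0}(\cdot,U)$ still satisfying the same structural inequalities, it is routine.
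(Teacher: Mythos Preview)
Your proposal is correct and follows exactly the approach the paper intends: the paper's own proof is the single sentence ``With \eqref{eq:var-alm-cal-Mor-est} and \eqref{eq:var-par-alm-min-Mor-est-thin} at hand, the proof is similar to that of Theorem~\ref{thm:par-alm-min-holder},'' and you have faithfully spelled out that similarity by rerunning the case analysis of Theorem~\ref{thm:par-alm-min-holder} with \eqref{eq:var-par-alm-min-Mor-est-thin} in the role of Proposition~\ref{prop:par-alm-min-Mor-est-thin} and \eqref{eq:var-alm-cal-Mor-est} in the role of Proposition~\ref{prop:alm-cal-Mor-Camp-est}, then invoking Lemma~\ref{lem:HL} and Lemma~\ref{lem:par-camp-space-embed} as in the proof of Theorem~\ref{thm:alm-cal-reg}.
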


\begin{proof}
With \eqref{eq:var-alm-cal-Mor-est} and \eqref{eq:var-par-alm-min-Mor-est-thin} at hand, the proof is similar to that of Theorem~\ref{thm:par-alm-min-holder}.
\end{proof}

\begin{proposition}
Let $U$ be an almost minimizer of the parabolic $A$-Signorini problem in $Q_1$. Define $$
\wDU(x',x_n,t)=\begin{cases}
  \D U(x',x_n,t),& x_n\ge 0\\
  \D U(x',-x_n,t),& x_n<0.
\end{cases}
$$
Then for any $K\Subset Q_1$, there is $R_0=R_0(n,\al,K,M)>0$ such that if $z_0\in K'=K\cap Q'_1$, then 
\begin{multline}
\label{eq:var-par-alm-min-grad-Camp-est}
\int_{Q_\rho(z_0)}|\wDU-\mean{\wDU}_{z_0,\rho}|^2\le C(n,M)\left(\frac\rho R\right)^{n+4}\int_{Q_R(z_0)}|\wDU-\mean{\wDU}_{z_0,R}|^2\\ +C(n,\al,K,M)\|U\|_{W^{1,0}_2(Q_1)}^2R^{n+2+2\be},
\end{multline}
for $0<\rho<R<R_0$ with $\be=\frac\al{4(2n+4+\al)}$.
\end{proposition}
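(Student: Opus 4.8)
The plan is to deduce the variable-coefficient thin-space Campanato estimate \eqref{eq:var-par-alm-min-grad-Camp-est} by transplanting the constant-coefficient result, Proposition~\ref{prop:par-alm-min-grad-Camp-est-thin}, through the deskewing change of variables from Section~\ref{sec:alm-min-var}. Fix $z_0\in K'$. By Lemma~\ref{lem:par-var-alm-min-transf-property}, the deskewed function $u_{z_0}(x,t)=U(\bar{\mfa}_{z_0}x+x_0,t+t_0)$ satisfies the almost parabolic Signorini property at the origin in $Q_R$ (with gauge $Mr^\al$), so Proposition~\ref{prop:par-alm-min-grad-Camp-est-thin} applies to $u_{z_0}$ on some smaller scale $r=\Ld^{-1/2}R$: for $0<\rho<r$,
$$
\int_{Q_\rho}|\wDu_{z_0}-\mean{\wDu_{z_0}}_\rho|^2\le C(n)\left(\frac\rho r\right)^{n+4}\int_{Q_r}|\wDu_{z_0}-\mean{\wDu_{z_0}}_r|^2+C(n,\al,K,M)\|u_{z_0}\|^2_{W^{1,0}_2(Q_1)}r^{n+2+2\be}.
$$
Here one must check that the even extension of $\D u_{z_0}$ behaves correctly under the transformation: since $\bar{T}_{z_0}$ maps $\Pi$ onto $\Pi$ (by construction of the orthogonal factor $O_{z_0}$, which was chosen precisely so that $O_{z_0}^{-1}(\Pi_{z_0})=\Pi$), the deskewing respects the thin space $Q'_1$, and $\wDu_{z_0}$ is literally the even-in-$x_n$ extension of $\D u_{z_0}$ from $Q_R^+$.

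Next I would push this back to $U$. Writing $\D u_{z_0}(x,t)=\bar{\mfa}_{z_0}^{\,T}(\D U)(\bar{\mfa}_{z_0}x+x_0,t+t_0)$ (a constant linear image of $\D U$ evaluated at the transformed point), and using $\la^{n/2}\le\det\mfa_{z_0}\le\Ld^{n/2}$ together with the ellipticity bounds $\la^{1/2}\le \|\bar{\mfa}_{z_0}\|\le \Ld^{1/2}$, each integral transforms into the corresponding integral of $\wDU$ over an ellipsoidal cylinder, with multiplicative constants depending only on $n,\al,M$; this is the exact analogue of the change-of-variable step \eqref{eq:cov1}--\eqref{eq:cov3} used in Proposition~\ref{prop:var-alm-cal-Mor-Camp-est}. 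The mean values $\mean{\bar{\mfa}_{z_0}^T\D U}$ get absorbed because, inside the norm $|\bar{\mfa}_{z_0}^T\D U-\mean{\bar{\mfa}_{z_0}^T\D U}|$, the fixed matrix $\bar{\mfa}_{z_0}^T$ commutes with averaging and its operator norm is controlled above and below; so $\int|\bar{\mfa}_{z_0}^T\wDU-\mean{\bar{\mfa}_{z_0}^T\wDU}|^2$ is comparable to $\int|\wDU-\mean{\wDU}|^2$ up to constants $C(M)$. Finally, to convert the ellipsoidal cylinders $F_{\bullet}(z_0)$ back to round parabolic cylinders $Q_{\bullet}(z_0)$ I would use the sandwiching $Q_{\la^{1/2}r}(z_0)\subset F_r(z_0)\subset Q_{\Ld^{1/2}r}(z_0)$, exactly as in the last part of the proof of Proposition~\ref{prop:var-alm-cal-Mor-Camp-est}: apply the transported estimate with radii $\la^{-1/2}\rho$ and $\Ld^{-1/2}R$ in the regime $0<\rho<(\la/\Ld)^{1/2}R$, and handle the remaining range $(\la/\Ld)^{1/2}R\le\rho\le R$ trivially by monotonicity, picking up only a factor $(\Ld/\la)^{(n+4)/2}$. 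Choosing $R_0=R_0(n,\al,K,M)$ small enough that all the intermediate cylinders stay inside $Q_1$ and that Proposition~\ref{prop:par-alm-min-grad-Camp-est-thin} is applicable, and noting $\|u_{z_0}\|_{W^{1,0}_2}\le C(M)\|U\|_{W^{1,0}_2(Q_1)}$, yields \eqref{eq:var-par-alm-min-grad-Camp-est}.

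The main obstacle is bookkeeping rather than conceptual: one has to make sure that the even extension operator $w\mapsto\wDw$ genuinely commutes with the deskewing (this relies on $O_{z_0}$ fixing the thin hyperplane, so that $\bar{\mfa}_{z_0}$ maps $\{x_n=0\}$ to $\{x_n=0\}$ and the reflection $x_n\mapsto -x_n$ is intertwined correctly), and that $\bar{\mfa}_{z_0}^T$ does not genuinely mix normal and tangential components in a way that breaks the even/odd decomposition — but since $\bar{\mfa}_{z_0}$ preserves $\Pi$, it has block-triangular form with respect to the splitting $\R^{n-1}\times\R$, which suffices. Once these symmetry compatibilities are verified, the estimate follows by the same two-scale comparison as in the $A\equiv I$ case, and all constants track through depending only on $n$, $\al$, $K$, and $M$.
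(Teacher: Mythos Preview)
Your overall strategy is the same as the paper's --- transplant Proposition~\ref{prop:par-alm-min-grad-Camp-est-thin} through the deskewing $\bar T_{z_0}$ --- and the scale-matching via $Q_\rho(z_0)\subset F_{\la^{-1/2}\rho}(z_0)\subset F_{\Ld^{-1/2}R}(z_0)\subset Q_R(z_0)$ is exactly right. The gap is in your resolution of ``the main obstacle.'' You claim that since $\bar{\mfa}_{z_0}$ preserves $\Pi$ it has block-triangular form, and that this suffices for the even-extension operator to commute with the change of variables. Block-triangular is correct, but it does \emph{not} suffice: writing $\bar{\mfa}_{z_0}=\bigl(\begin{smallmatrix}B&b\\0&c\end{smallmatrix}\bigr)$, commutation with the reflection $R=\mathrm{diag}(I_{n-1},-1)$ requires $b=0$, i.e.\ block-\emph{diagonal}, and there is no reason for $\mfa_{z_0}O_{z_0}$ to have that form. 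Concretely, for $x_n<0$ you have $\widehat{\D u_{z_0}}(x)=\bar{\mfa}_{z_0}^{T}(\D U)(\bar{\mfa}_{z_0}Rx+x_0)$, whereas $\bar{\mfa}_{z_0}^{T}\,\wDU(\bar T_{z_0}^{-1}x)=\bar{\mfa}_{z_0}^{T}(\D U)(R\bar{\mfa}_{z_0}x+x_0)$; these agree only if $\bar{\mfa}_{z_0}R=R\bar{\mfa}_{z_0}$. So the full-cylinder integrals of $\widehat{\D u_{z_0}}$ and of $\wDU$ are not directly related by the change of variables.

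The paper repairs this with a half-cylinder detour: use even symmetry once to write
\[
\int_{Q_\rho}|\widehat{\D u_{z_0}}-\mean{\widehat{\D u_{z_0}}}_\rho|^2
=2\int_{Q_\rho^+}|\D u_{z_0}-\mean{\D u_{z_0}}_{Q_\rho^+}|^2,
\]
then change variables on the \emph{half} cylinder --- here block-triangular \emph{does} suffice, because it guarantees $\bar T_{z_0}^{-1}(B_r^+)=E_r^+(z_0)$ --- obtaining the estimate for $\bar{\mfa}_{z_0}\D U$ over $F_\rho^+(z_0)$, $F_R^+(z_0)$. After the ellipsoid-to-ball sandwiching you get the estimate for $\D U$ over $Q_\rho^+(z_0)$, $Q_R^+(z_0)$, and a second application of even symmetry (now for $\wDU$) yields \eqref{eq:var-par-alm-min-grad-Camp-est}. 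Once you route the argument through half-cylinders in this way, everything else you wrote goes through.
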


\begin{proof}
Since $u_{z_0}$ satisfies the almost parabolic Signorini property at $0$ in $Q_{R_1}$ for some $R_1=R_1(K,M)>0$, applying Proposition~\ref{prop:par-alm-min-grad-Camp-est-thin} gives \begin{equation*}\begin{aligned}
    \int_{Q_\rho}|\widehat{\D u_{z_0}}-\mean{\widehat{\D u_{z_0}}}_\rho|^2&\begin{multlined}[t]\le C(n)\left(\frac\rho R\right)^{n+4}\int_{Q_R}|\widehat{\D u_{z_0}}-\mean{\widehat{\D u_{z_0}}}_R|^2\\
    +C(n,\al,K,M)\|u_{z_0}\|_{W^{1,0}_2(Q_{R_1})}^2R^{n+2+2\be},\end{multlined}\\
    & \begin{multlined}[t]\le C(n)\left(\frac\rho R\right)^{n+4}\int_{Q_R}|\widehat{\D u_{z_0}}-\mean{\widehat{\D u_{z_0}}}_R|^2\\
    +C(n,\al,K,M)\|U\|_{W^{1,0}_2(Q_1)}^2R^{n+2+2\be},
\end{multlined}\end{aligned}\end{equation*}
for $0<\rho<R<R_2=R_2(n,\al,K,M)$. By the even symmetry of $\widehat{\D u_{z_0}}$, \begin{multline*}
\int_{Q_\rho^+}|\D u_{z_0}-\mean{\D u_{z_0}}_{Q_\rho^+}|^2\le C(n)\left(\frac\rho R\right)^{n+4}\int_{Q_R^+}|\D u_{z_0}-\mean{\D u_{z_0}}_{Q_R^+}|^2\\ +C(n,\al,K,M)\|U\|^2_{W^{1,0}_2(Q_1)}R^{n+2+2\be}.
\end{multline*}
Using that $\bar{T}_{z_0}^{-1}(B_r^+)\times(t_0-r^2,t_0]=F_r^+(z_0)$ for any $r>0$, the above inequality becomes \begin{multline*}
\int_{F_\rho^+(z_0)}|\bar{\mfa}_{z_0}\D U-\mean{\bar{\mfa}_{z_0}\D U}_{F_\rho^+(z_0)}|^2\\
\qquad\qquad\le C(n)\left(\frac\rho R\right)^{n+4}\int_{F_R^+(z_0)}|\bar{\mfa}_{z_0}\D U-\mean{\bar{\mfa}_{z_0}\D U}_{F_R^+(z_0)}|^2\\ +C(n,\al,K,M)\|U\|^2_{W^{1,0}_2(Q_1)}R^{n+2+2\be}.
\end{multline*}
Repeating the argument that \eqref{eq:var-alm-cal-Camp-est-A} implies \eqref{eq:var-alm-cal-Camp-est} in the proof of Proposition~\ref{prop:var-alm-cal-Mor-Camp-est}, we obtain \begin{multline*}
\int_{Q_\rho^+(z_0)}|\D U-\mean{\D U}_{Q_\rho^+(z_0)}|^2\\
\qquad\qquad\le C(n,M)\left(\frac\rho R\right)^{n+4}\int_{Q_R^+(z_0)}|\D U-\mean{\D U}_{Q^+_R(z_0)}|^2\\ +C(n,\al,K,M)\|U\|^2_{W^{1,0}_2(Q_1)}R^{n+2+2\be}.
\end{multline*}
By the even symmetry of $\wDU$, this inequality implies \eqref{eq:var-par-alm-min-grad-Camp-est}.
\end{proof}

\begin{theorem}\label{thm:var-par-alm-min-grad-holder}
Let $U$ be an almost minimizer for the parabolic $A$-Signorini problem in $Q_1$. Then $$\wDU\in H^{\be,\be/2}(Q_1)\quad\text{with }\be=\frac\al{4(2n+4+\al)}.$$
Morevoer, for any $K\Subset Q_1$, we have $$
\|\wDU\|_{H^{\be,\be/2}(K)}\le C(n,\al,K,M)\|U\|^2_{W^{1,0}_2(Q_1)}.
$$
\end{theorem}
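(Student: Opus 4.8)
The plan is to mirror exactly the proof of Theorem~\ref{thm:par-alm-min-grad-holder}, now using the variable-coefficient ingredients already assembled in this section. Fix a parabolic cylinder $K\Subset Q_1$ centered on $Q'_1$, and let $R_0=R_0(n,\al,K,M)$ be as in the previous proposition. The first step is to establish, for every $z_0\in K$ and every $0<\rho<R<R_0$, the concentric-cylinder Campanato estimate
$$
\int_{Q_\rho(z_0)}|\wDU-\mean{\wDU}_{z_0,\rho}|^2\le C(n,\al,M)\left(\frac\rho R\right)^{n+4}\int_{Q_R(z_0)}|\wDU-\mean{\wDU}_{z_0,R}|^2+C(n,\al,K,M)\|U\|^2_{W^{1,0}_2(Q_1)}R^{n+2+2\be}.
$$
When $z_0\in K\cap Q'_1$ this is precisely \eqref{eq:var-par-alm-min-grad-Camp-est}. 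When $z_0\in K\cap Q_1^+$, I split into the same cases as before: if $\rho\ge R/4$ it is trivial by monotonicity; otherwise set $d:=\dist(x_0,B'_1)>0$ and pick $z_1=(x_1,t_0)\in K\cap Q'_1$ with $|x_1-x_0|=d$. The only genuinely new point here is that ``$U$ is an almost $A$-caloric function in $Q_1^+$'' (since away from the thin space the obstacle is inactive), so in the subcases where the relevant cylinder sits inside $Q_1^+$ I invoke \eqref{eq:var-alm-cal-Camp-est} (the variable-coefficient analogue of \eqref{eq:alm-cal-Camp-est-improved}) in place of \eqref{eq:alm-cal-Camp-est-improved}; in the subcases where the cylinder touches $Q'_1$ I use the inclusion $Q_\rho(z_0)\subset Q_{2\rho}(z_1)\subset Q_{R/2}(z_1)\subset Q_R(z_0)$ together with \eqref{eq:var-par-alm-min-grad-Camp-est} at $z_1$. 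To feed \eqref{eq:var-alm-cal-Camp-est} I also need the improved bound $\int_{Q_R(z_0)}|\D U|^2\le C\|U\|^2_{W^{1,0}_2(Q_1)}R^{n+2}$, which follows by applying Lemma~\ref{lem:HL} to \eqref{eq:var-par-alm-min-Mor-est-thin} (exactly as \eqref{eq:alm-cal-phi-bound} was derived) combined with $H^{\si,\si/2}$-regularity from Theorem~\ref{thm:var-par-alm-min-holder}.

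Once the displayed estimate holds for all $z_0\in K$, I apply Lemma~\ref{lem:HL} with exponents $\gamma=n+4$, $\be'=n+2+2\be$ (note $n+4>n+2+2\be$ since $2\be<2$) to absorb the first term, obtaining
$$
\int_{Q_\rho(z_0)}|\wDU-\mean{\wDU}_{z_0,\rho}|^2\le C(n,\al,K,M)\left[\left(\frac\rho R\right)^{n+2+2\be}\int_{Q_R(z_0)}|\wDU-\mean{\wDU}_{z_0,R}|^2+\|U\|^2_{W^{1,0}_2(Q_1)}\rho^{n+2+2\be}\right].
$$
Letting $R\nearrow R_0$ and using the crude bound $\int_{Q_{R_0}(z_0)}|\wDU-\mean{\wDU}_{z_0,R_0}|^2\le C\|\wDU\|^2_{L^2}\le C\|U\|^2_{W^{1,0}_2(Q_1)}$ gives $\int_{Q_\rho(z_0)}|\wDU-\mean{\wDU}_{z_0,\rho}|^2\le C(n,\al,K,M)\|U\|^2_{W^{1,0}_2(Q_1)}\rho^{n+2+2\be}$ for all $z_0\in K$, $0<\rho<R_0$. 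The parabolic Campanato embedding Lemma~\ref{lem:par-camp-space-embed} (applied on a slightly smaller cylinder and with $\si$ there equal to $\be$) then yields $\wDU\in H^{\be,\be/2}(K)$ with $\|\wDU\|_{H^{\be,\be/2}(K)}\le C(n,\al,K,M)\|U\|_{W^{1,0}_2(Q_1)}$, which is the claim (the square on the right-hand side of the stated estimate appears to be a typo).

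The main obstacle is bookkeeping rather than conceptual: one must check that the transformation machinery of Section~\ref{sec:alm-min-var} (the deskewing $u_{z_0}$, the change-of-variable identities \eqref{eq:cov1}--\eqref{eq:cov3}, and the passage between $\wDU$ and $\widehat{\D u_{z_0}}$ via $\bar{\mfa}_{z_0}$) is compatible with the even extension across the \emph{same} flat thin space at \emph{every} center $z_0$ — this is exactly what was arranged by choosing $O_{z_0}$ with $O_{z_0}^{-1}(\Pi_{z_0})=\Pi$, and it has already been used in the preceding proposition, so it carries over verbatim. The one subtlety worth a sentence is that in \emph{Case 1} (the thin-space center) of \eqref{eq:var-par-alm-min-grad-Camp-est} the error exponent must be tracked through the ellipticity rescaling $r=\Ld^{-1/2}R$ versus $\rho=\la^{-1/2}\rho$, just as in Proposition~\ref{prop:var-alm-cal-Mor-Camp-est}; the ratios of $\la,\Ld$ only contribute harmless constants absorbed into $C(n,\al,K,M)$.
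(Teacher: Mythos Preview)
Your plan is correct and follows the same approach as the paper, which simply records the interior Campanato estimate for $\D U$ on cylinders $Q_r(z_0)\Subset Q_1^\pm$ (using that $U$ is almost $A$-caloric there, via \eqref{eq:var-alm-cal-Camp-est} and Theorem~\ref{thm:alm-A-cal-reg}) and then refers back to the case analysis in the proof of Theorem~\ref{thm:par-alm-min-grad-holder} to combine this with \eqref{eq:var-par-alm-min-grad-Camp-est}. One small point: the bound $\int_{Q_R(z_0)}|\D U|^2\le C\|U\|^2_{W^{1,0}_2(Q_1)}R^{n+2}$ you invoke does not follow directly from Lemma~\ref{lem:HL} applied to \eqref{eq:var-par-alm-min-Mor-est-thin} (that only yields $R^{n+2\si}$ for $\si<1$); it requires either the bootstrap inside Theorem~\ref{thm:alm-A-cal-reg} or else you can simply use the weaker $R^{n+2\si}$ bound, which still produces an error $R^{n+2\si+\al}\ge R^{n+2+2\be}$ for $\si$ close enough to $1$.
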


\begin{proof}
Since $U$ is almost $A$-caloric in $Q_1^\pm$, we have by \eqref{eq:var-alm-cal-Camp-est} and Theorem~\ref{thm:alm-A-cal-reg}
\begin{multline*}
    \int_{Q_\rho(z_0)}|\D U-\mean{\D U}_{z_0,\rho}|^2\le C(n,\al,M)\left(\frac\rho r\right)^{n+4}\int_{Q_r(z_0)}|\D U-\mean{\D U}_{z_0,r}|^2\\ +C(n,K,\al,M)\|U\|_{W^{1,0}_2(Q_1)}^2r^{n+2+\al},
\end{multline*}
whenever $z_0\in K$ and $0<\rho<r<r_0=r_0(n,K,\al,M)$ with $Q_r(z_0)\Subset Q_1^\pm$. As we have seen in the proof of Theorem~\ref{thm:par-alm-min-grad-holder}, the above estimate, along with \eqref{eq:var-par-alm-min-grad-Camp-est}, gives Theorem~\ref{thm:var-par-alm-min-grad-holder}.
\end{proof}

We finish this section by formally proving Theorem~\ref{mthm:II}.

\begin{proof}[Proof of Theorem~\ref{mthm:II}] Theorem~\ref{mthm:II} follows by combining Theorems~\ref{thm:var-par-alm-min-holder} and \ref{thm:var-par-alm-min-grad-holder}.
\end{proof}

\appendix

%%%%%%%%%%%%%%%%%%%%%%%%%%%%%%%%%%%%%%%%%%%%%

\section{An example of almost minimizers}\label{sec:appen-ex}

\begin{example}\label{ex:var-par-alm-min-drift}
Let $U$ be a solution of the parabolic $A$-Signorini problem in $Q_1$ with the velocity field $b\in L^\infty(-1,0\,;\,L^p(B_1))$, $p>n$: \begin{align*}
    -\dv(A\D U)+\mean{b(x,t),\D U}+\pa_tU=0&\quad\text{in }Q_1^\pm\\
    \mean{A\D U,\nu^+}+\mean{A\D U,\nu^-}\ge 0,\ U\ge 0,\ U(\mean{A\D U,\nu^+}+\mean{A\D U,\nu^-})=0&\quad\text{on }Q'_1,
\end{align*}
where $\nu^\pm=\mp e_n$ and $\mean{A\D U,\nu^\pm}$ on $Q'_1$ are understood as the limits from inside $Q_1^\pm$. We interpret this in the weak sense that $U$ satisfies for a.e. $t\in(-1,0)$ the variational inequality $$
\int_{B_1}\mean{A\D U,\D(V-U)}+\mean{b,\D U}(V-U)+\pa_tU(V-U)\ge 0,
$$
for any competitor $V\in W^{1,2}(B_1)$ such that $V\ge 0$ on $B'_1$ and $V=U$ on $\pa B_1$. Then $U$ is an almost minimizer for the parabolic $A$-Signorini problem in $Q_1$ with a gauge function $\om(r)=Cr^{1-n/p}$, for some $C>0$ depending on $n$, $p$, $\sup_{-1<t<0}\left(\|b(\cdot,t)\|_{L^p(B_1)}\right)$.

\end{example}

\begin{proof}
For $F_r(z_0)=F_r(x_0,t_0)\Subset Q_1$, let $V\in W^{1,0}_2(F_r(z_0))$ be such that $V\ge 0$ on $F'_r(z_0)$ and $V-U\in L^2(t_0-r^2,t_0;W^{1,2}_0(E_r(z_0))$. By extending $V$ to $B_1\times(t_0-r^2,t_0]$ as equal to $U$ in $(B_1\sm E_r(z_0))\times(t_0-r^2,t_0]$ and applying the variational inequality, we get $$
\int_{F_r(z_0)}\mean{A\D U,\D(V-U)}+\mean{b,\D U}(V-U)+\pa_tU(V-U)\ge 0.
$$
It follows that \begin{multline}
    \label{eq:ex-var-par-alm-min-drift-1}
    \int_{F_r(z_0)}\mean{A\D U,\D U}+2\pa_tU(U-V)\\
    \le \int_{F_r(z_0)}\mean{A\D U,\D U}+2\mean{A\D U,\D(V-U)}+2\mean{b,\D U}(V-U).
\end{multline}
To estimate the terms in the right-hand side of \eqref{eq:ex-var-par-alm-min-drift-1}, we observe that 
\begin{multline}
    \label{eq:ex-var-par-alm-min-drift-2}
    \int_{F_r(z_0)}\mean{A\D U,\D U}+2\mean{A\D U,\D(V-U)}\\
    =\int_{F_r(z_0)}2\mean{\D U,\D V}-\mean{A\D U,\D U}\le \int_{F_r(z_0)}\mean{A\D V,\D V}.
\end{multline}
Moreover, for $L:=\sup\{\|b(\cdot,t)\|_{L^p(B_1)}\,:\,-1<t<0\}$, $$
\int_{F_r(z_0)}2\mean{b,\D U}(V-U)\le \int_{t_0-r^2}^{t_0}2L\|\D U\|_{L^2(E_r(z_0))}\|V-U\|_{L^{p^*}(E_r(z_0))}\,dt,
$$
with $p^*=2p/(p-2)$. Since $(V-U)(\cdot,t)\in W^{1,2}_0(E_r(z_0))$, Sobolev's inequality yields $$
\|V-U\|_{L^{p^*}(E_r(z_0))}\le C_{n,p}r^\g\|\D(V-U)\|_{L^2(E_r(z_0))},
$$
with $\g=1-n/p$. Thus, 
\begin{align}\label{eq:ex-var-par-alm-min-drift-3}
\hspace{2em}&\hspace{-2em}\int_{Q_r(z_0)}2\mean{b,\D U}(V-U)\\
    &\nonumber\le \int_{t_0-r^2}^{t_0}C_{n,p,L}r^\g\|\D U\|_{L^2(E_r(z_0))}\|\D(V-U)\|_{L^2(E_r(z_0))}\,dt\\
    &\nonumber\le Cr^\g\int_{t_0-r^2}^{t_0}\left(\|\D U\|^2_{L^2(E_r(z_0))}+\|\D(V-U)\|^2_{L^2(E_r(z_0))}\right)\,dt\\
    &\nonumber\le Cr^\g\int_{F_r(z_0)}\left(|\D U|^2+|\D V|^2\right)\\
    &\nonumber\le C_{n,p,L,M}r^\g\int_{F_r(z_0)}\mean{A\D U,\D U}+\mean{A\D V,\D V}
\end{align}
Now, by \eqref{eq:ex-var-par-alm-min-drift-1}, \eqref{eq:ex-var-par-alm-min-drift-2} and \eqref{eq:ex-var-par-alm-min-drift-3}, we conclude that \begin{equation*}
\int_{F_r(z_0)}(1-Cr^\g)\mean{A\D U,\D U}+2\pa_tU(U-V)\le (1+Cr^\g)\int_{F_r(z_0)}\mean{A\D V,\D V}.\qedhere
\end{equation*}
\end{proof}

%%%%%%%%%%%%%%%%%%%%%%%%%%%%%%%%%%%%%%%%%%

\begin{bibdiv}
\begin{biblist}

\bib{Anz83}{article}{
   author={Anzellotti, Gabriele},
   title={On the $C^{1,\alpha }$-regularity of $\omega $-minima of
   quadratic functionals},
   language={English, with Italian summary},
   journal={Boll. Un. Mat. Ital. C (6)},
   volume={2},
   date={1983},
   number={1},
   pages={195--212},
   review={\MR{718371}},
}

\bib{ArkUra96}{article}{
   author={Arkhipova, A.},
   author={Uraltseva, N.},
   title={Sharp estimates for solutions of a parabolic Signorini problem},
   journal={Math. Nachr.},
   volume={177},
   date={1996},
   pages={11--29},
   issn={0025-584X},
   review={\MR{1374941}},
   doi={10.1002/mana.19961770103},
}

\bib{AthCaf04}{article}{
   author={Athanasopoulos, I.},
   author={Caffarelli, L. A.},
   title={Optimal regularity of lower dimensional obstacle problems},
   language={English, with English and Russian summaries},
   journal={Zap. Nauchn. Sem. S.-Peterburg. Otdel. Mat. Inst. Steklov.
   (POMI)},
   volume={310},
   date={2004},
   number={Kraev. Zadachi Mat. Fiz. i Smezh. Vopr. Teor. Funkts. 35
   [34]},
   pages={49--66, 226},
   issn={0373-2703},
   translation={
      journal={J. Math. Sci. (N.Y.)},
      volume={132},
      date={2006},
      number={3},
      pages={274--284},
      issn={1072-3374},
   },
   review={\MR{2120184}},
   doi={10.1007/s10958-005-0496-1},
}

\bib{AthCafMil18}{article}{
   author={Athanasopoulos, Ioannis},
   author={Caffarelli, Luis},
   author={Milakis, Emmanouil},
   title={On the regularity of the non-dynamic parabolic fractional obstacle
   problem},
   journal={J. Differential Equations},
   volume={265},
   date={2018},
   number={6},
   pages={2614--2647},
   issn={0022-0396},
   review={\MR{3804726}},
   doi={10.1016/j.jde.2018.04.043},
}

\bib{AthCafSal08}{article}{
   author={Athanasopoulos, I.},
   author={Caffarelli, L. A.},
   author={Salsa, S.},
   title={The structure of the free boundary for lower dimensional obstacle
   problems},
   journal={Amer. J. Math.},
   volume={130},
   date={2008},
   number={2},
   pages={485--498},
   issn={0002-9327},
   review={\MR{2405165}},
   doi={10.1353/ajm.2008.0016},
}

\bib{BanDanGarPet20}{article}{
   author={Banerjee, A.},
   author={Danielli, D.},
   author={Garofalo, N.},
   author={Petrosyan, A.},
   title={The regular free boundary in the thin obstacle problem for
   degenerate parabolic equations},
   journal={Algebra i Analiz},
   volume={32},
   date={2020},
   number={3},
   pages={84--126},
   issn={0234-0852},
   translation={
      journal={St. Petersburg Math. J.},
      volume={32},
      date={2021},
      number={3},
      pages={449--480},
      issn={1061-0022},
   },
   review={\MR{4099095}},
   doi={10.1090/spmj/1656},
}

\bib{BanDanGarPet21}{article}{
   author={Banerjee, Agnid},
   author={Danielli, Donatella},
   author={Garofalo, Nicola},
   author={Petrosyan, Arshak},
   title={The structure of the singular set in the thin obstacle problem for
   degenerate parabolic equations},
   journal={Calc. Var. Partial Differential Equations},
   volume={60},
   date={2021},
   number={3},
   pages={Paper No. 91, 52},
   issn={0944-2669},
   review={\MR{4249869}},
   doi={10.1007/s00526-021-01938-2},
}

\bib{BanSVGZel17}{article}{
   author={Banerjee, Agnid},
   author={{Smit Vega Garcia}, Mariana},
   author={Zeller, Andrew K.},
   title={Higher regularity of the free boundary in the parabolic Signorini
   problem},
   journal={Calc. Var. Partial Differential Equations},
   volume={56},
   date={2017},
   number={1},
   pages={Art. 7, 26},
   issn={0944-2669},
   review={\MR{3592762}},
   doi={10.1007/s00526-016-1103-7},
} 

\bib{Cam66}{article}{
   author={Campanato, Sergio},
   title={Equazioni paraboliche del secondo ordine e spazi {$\mathcal{L}^{2,\,\theta }\,(\Omega ,\,\delta )$}},
   journal={Ann. Mat. Pura Appl. (4)},
   volume={73},
   date={1966},
   pages={55--102},
   issn={0003-4622},
   review={\MR{213737}},
   doi={10.1007/BF02415082},
 }

\bib{ConTan04}{book}{
   author={Cont, Rama},
   author={Tankov, Peter},
   title={Financial modelling with jump processes},
   series={Chapman \& Hall/CRC Financial Mathematics Series},
   publisher={Chapman \& Hall/CRC, Boca Raton, FL},
   date={2004},
   pages={xvi+535},
   isbn={1-5848-8413-4},
   review={\MR{2042661}},
}

\bib{DanGarPetTo17}{article}{
   author={Danielli, Donatella},
   author={Garofalo, Nicola},
   author={Petrosyan, Arshak},
   author={To, Tung},
   title={Optimal regularity and the free boundary in the parabolic
   Signorini problem},
   journal={Mem. Amer. Math. Soc.},
   volume={249},
   date={2017},
   number={1181},
   pages={v + 103},
   issn={0065-9266},
   isbn={978-1-4704-2547-0},
   isbn={978-1-4704-4129-6},
   review={\MR{3709717}},
   doi={10.1090/memo/1181},
 }

\bib{DuvLio76}{book}{
   author={Duvaut, G.},
   author={Lions, J.-L.},
   title={Inequalities in mechanics and physics},
   note={Translated from the French by C. W. John;
     Grundlehren der Mathematischen Wissenschaften, 219},
   publisher={Springer-Verlag, Berlin-New York},
   date={1976},
   pages={xvi+397},
   isbn={3-540-07327-2},
   review={\MR{0521262}},
}

\bib{GarPet09}{article}{
   author={Garofalo, Nicola},
   author={Petrosyan, Arshak},
   title={Some new monotonicity formulas and the singular set in the lower
   dimensional obstacle problem},
   journal={Invent. Math.},
   volume={177},
   date={2009},
   number={2},
   pages={415--461},
   issn={0020-9910},
   review={\MR{2511747}},
   doi={10.1007/s00222-009-0188-4},
}

\bib{Hab14}{article}{
   author={Habermann, Jens},
   title={Vector-valued parabolic {$\omega$}-minimizers},
   journal={Adv. Differential Equations},
   volume={19},
   date={2014},
   number={11-12},
   pages={1067--1136},
 }

\bib{HanLin97}{book}{
   author={Han, Qing},
   author={Lin, Fanghua},
   title={Elliptic partial differential equations},
   series={Courant Lecture Notes in Mathematics},
   volume={1},
   publisher={New York University, Courant Institute of Mathematical
   Sciences, New York; American Mathematical Society, Providence, RI},
   date={1997},
   pages={x+144},
   isbn={0-9658703-0-8},
   isbn={0-8218-2691-3},
   review={\MR{1669352}},
 }

\bib{JeoPet21}{article}{
   author={Jeon, Seongmin},
   author={Petrosyan, Arshak},
   title={Almost minimizers for the thin obstacle problem},
   journal={Calc. Var. Partial Differential Equations},
   volume={60},
   date={2021},
   pages={Paper No. 124, 59},
   issn={0944-2669},
   review={\MR{4277328}},
   doi={10.1007/s00526-021-01986-8},
 }
 
\bib{JeoPetSVG20}{article}{
   author={Jeon, Seongmin},
   author={Petrosyan, Arshak},
   author={Smit Vega Garcia, Mariana},
   title={Almost minimizers for the thin obstacle problem with variable coefficients},
   pages={53},
   date={2020},
   status={preprint},
   eprint={\arXiv{2007.07349}},
 }

\bib{PetZel19}{article}{
   author={Petrosyan, Arshak},
   author={Zeller, Andrew},
   title={Boundedness and continuity of the time derivative in the
     parabolic Signorini problem},
   journal={Math. Res. Let.},
   volume={26},
   number={1},
   pages={281--292},
   date={2019},
   doi={10.4310/MRL.2019.v26.n1.a13},
   %eprint={\arXiv{1512.09173}},
}

\bib{Sig59}{article}{
   author={Signorini, A.},
   title={Questioni di elasticit\`{a} non linearizzata e semilinearizzata},
   language={Italian},
   journal={Rend. Mat. e Appl. (5)},
   volume={18},
   date={1959},
   pages={95--139},
   review={\MR{0118021}},
}

\bib{Ura85}{article}{
   author={Ural\cprime tseva, N. N.},
   title={H\"{o}lder continuity of gradients of solutions of parabolic equations with boundary conditions of {S}ignorini type},
   journal={Dokl. Akad. Nauk SSSR},
   volume={280},
   date={1985},
   pages={563--565},
   issn={0002-3264},
   review={\MR{775926}},
 }

\end{biblist}
\end{bibdiv}

\end{document}